\newcommand{\comments}[1]{}
\newcounter{parcount}[subsubsection]
\newtheorem{theorem}{Theorem}[section]
\newtheorem{lemma}[theorem]{Lemma}
\newtheorem{utv*}{Proposition}
\newtheorem{hyp*}{Conjecture}
\newtheorem{defin}{Definition}
\newtheorem{zamech}[theorem]{Remark}
\newtheorem*{th*}{Theorem}
\newtheorem{prop}{Proposition}
\numberwithin{equation}{section}
\newcommand{\norm}[2]{\left\| #1 \right\|_{#2}}
\newcommand{\mdl}[1]{\left| #1 \right|}
\newcommand{\av}[2]{\left\langle #1\right\rangle_{_{\scriptstyle #2}}}
\newcommand{\ave}[1]{\left\langle #1\right\rangle}
\newcommand{\R}{\mathbb{R}}
\newcommand{\Z}{\mathbb{Z}}
\def\nn{\nonumber}
\def\sli{\sum\limits}
\def\ili{\int\limits}
\def\vf{\varphi}
\title{Equivalent definitions of dyadic Muckenhoupt and Reverse H\"older classes in terms of Carleson sequences, weak classes, and comparability of dyadic $L\log L$ and $A_\infty$ constants.}
\author{O. Beznosova}
\address{Department of Mathematics, Baylor University, One Bear Place \#97328, Waco, TX 76798-7328, USA.}
\author{A. Reznikov}
\address{Department of Mathematics, Michigan State University, East
Lansing, MI 48824, USA}
\address{St.-Petersburg Department of the Steklov Mathematical Institute, Fontanka, 27, 191023, Saint Petersburg, Russia.}
\subjclass[2000]{42B20, 42B25}
\keywords{$A_\infty$ weights, $RH_1$ weights, Reverse H\"older condition, sharp estimates, elliptic PDE}
\begin{document}

\maketitle

\begin{abstract}

{

In the dyadic case the union of the Reverse H\"{o}lder classes, $\bigcup_{p>1} RH_p^d$ is strictly larger than the union of the Muckenhoupt classes $\bigcup_{p>1} A_p^d = A_\infty^d$. We introduce the
$RH_1^d$ condition as a limiting case of the $RH_p^d$ inequalities as $p$ tends to $1$ and show the sharp bound on $RH_1^d$ constant of the weight $w$ in terms of its $A_\infty^d$ constant.

We also take a look at the summation conditions of the Buckley type for the dyadic Reverse H\"{o}lder and Muckenhoupt weights and deduce them from an intrinsic lemma which gives a summation representation of the bumped average of a weight. Our lemmata also allow us to obtain summation conditions for continuous Reverse H\"{o}lder and Muckenhoupt classes of weights and both continuous and dyadic weak Reverse H\"{o}lder classes. In particular, it shows that a weight belongs to the class $RH_1$ if and only if it satisfies Buckley's inequality. We also show that the constant in each summation inequality of Buckley's type is comparable to the corresponding Muckenhoupt or Reverse H\"{o}lder constant.



To prove our main results we use the Bellman function technique.
}
\end{abstract}

\section{Definitions and Main Results.}
\label{s: intro}
Recently different approaches to dyadic and continuous $A_\infty$ class gave an essential improvement of the famous $A_2$ conjecture. The improvement, called $A_p-A_\infty$ bound for Calderon-Zygmund operators, was obtained by means of the observation that if a weight $w$ belongs to the Muckenhoupt class $A_p$, then it belong to a bigger class $A_\infty$, and a certain sequence satisfies the Carleson property. We refer the reader to papers \cite{HPTV}, \cite{HytPer2011} for the precise proof of $A_2-A_\infty$ bound (in \cite{HPTV} it is not formulated, but can be seen from the proof), and to \cite{HyLa} for a full proof of the $A_p-A_\infty$ bound.

Carleson sequences, related to $A_p$ weights, appeared in many papers, where boundedness of singular operators was studied. Many of them were proved using Bellman function method. Using this method, the Carleson embedding theorem was proved in \cite{NTV}. Results related to Carleson measures (partially proved with certain Bellman functions) also appeared in \cite{NTV-Tb}, \cite{Wittwer:00}, \cite{PetPot}. Also, the ``easy'' case of the two weight inequality, \cite{VV}, is a certain summation condition, and was also obtained by means of Bellman function. Most of our proofs will use very natural (but not totally sharp) Bellman functions.

Let us explain our results in more details.
In this paper we present equivalent definitions of Muckenhoupt classes $A_p$, Reverse H\"older classes $RH_p$, and prove sharp inequalities, that show that these definitions are indeed equivalent. One type of these definitions is given in terms of Carleson sequences. 
Also, we define limiting cases $A_\infty$ and $RH_1$, which in the continuous case appear to be same sets (see \cite{BR}), but in the dyadic case the class $RH_1$ is strictly bigger. We give equivalent definitions of these classes in terms of certain Carleson sequences; besides this, we give a sharp estimate on so called $A_\infty$ and $RH_1$ constants, which appears to be much harder than the continuous case (and, actually, somehow uses the continuous result).

The paper is organized as follows. We start by following paper \cite{BR}, with all the main definitions of dyadic Reverse H\"{o}lder and Muckenhoupt classes and state several equivalent ways define class $RH_1^d$. Also in Section \ref{s: intro} we state our first main result of the paper, Theorem \ref{theorem_RH1<Ainfty} in which we establish the comparability of dyadic $A_\infty^d$ and $RH_1^d$ constants.

In Section \ref{s: Buckley} we study summation conditions, introduced first in \cite{FeffermanKenigPipher:91} and \cite{Buckley:90}. Our second and third main results of this paper are, in fact, Lemma \ref{main_Lemma}  and Lemma \ref{inv_lemma}, two intrinsic lemmas from which we deduce Theorem \ref{our_Buckley_thm1} about comparability of sums in Buckley's summation condition and certain bumped averages of the weight $w$. Please note that even though Theorem \ref{our_Buckley_thm1} turns out an extremely strong fact and is very handy for H\"{o}lder and Muckenhoupt classes, our lemmata, especially Lemma \ref{main_Lemma} are much more general and could be applied to potentially large class of bumped averages of any nonnegative function $w$ and every interval $J \subset \R$. We show how Theorem \ref{our_Buckley_thm1} follows from our lemmata and how Buckley's theorem follows from Theorem \ref{our_Buckley_thm1}. It turns out that Theorem \ref{our_Buckley_thm1} is also sufficiently stronger than Buckley's theorem because it is not summation conditions for Reverse H\"{o}lder or Muckenhoupt classes, but comparability of averages and summations for any weight and any interval. This is illustrated in Theorem \ref{our_Buckley_thm}, where the comparability of constants in summation conditions and corresponding H\"{o}lder and Muckenhoupt constants of the weight is established in both continuous and dyadic cases.

In Section \ref{s: WRH} we talk about weak Reverse H\"{o}lder and Muckenhoupt classes. We start by giving definitions of these classes and state another consequence of Theorem \ref{our_Buckley_thm1}, Theorem \ref{weakineq}, which contains a version of Buckley's theorem but for the weak Reverse H\"{o}lder weights. The proof of Theorem \ref{weakineq} is essentially the same as the proof of theorem \ref{our_Buckley_thm1}, so we skip most of the details.

All Bellman function proofs can be found in Section \ref{s: proofs}. We start with proof of Lemma \ref{main_Lemma}, which we think is the simplest of three Bellman function proofs given in this paper and is a nice introduction to the Bellman function technique. Bellman function technique is not new, but as far as we know it is the first place where Bellman function technique is applied in such ``intrinsic'' setup. By ``intrinsic'' here we mean that lemma has function $A(x)$ as one of the parameters, convexity properties of function $A$ are then used to build Bellman function for the inequality. Proof of Lemma \ref{main_Lemma} is followed by the proof of Lemma \ref{inv_lemma} which we hope will be easy to digest after proof of Lemma \ref{main_Lemma}. Proof of Theorem \ref{theorem_RH1<Ainfty} is the hardest one and takes last eighteen pages of the paper. The proof itself is in fact very similar to the proof of continuous version of Theorem \ref{theorem_RH1<Ainfty}, which can be found in \cite{BR}. This dyadic proof is longer than he continuous one because in the dyadic case we have to deal with many details that are specific for the dyadic Bellman function proof in the non-convex domain. We encourage the reader to understand the proof of Theorem 1.1 from \cite{BR} first and then read our proof of Theorem \ref{theorem_RH1<Ainfty}.

All results of this paper are in one-dimensional case only.

{\bf{Acknowledgements}}

Authors are grateful to A. Volberg for useful suggestions in proving Theorem \ref{theorem_RH1<Ainfty} and to V. Vasyunin for useful discussions.

We would also like to express our gratitude to C. Thiele, I. Uriarte-Tuero and A. Volberg for organizing the Summer School 2010 in UCLA, where this paper was originated and C. P\'{e}rez and R. Esp\'{\i}nola for organizing the Summer School 2011 in Seville and AIM workshop, where we finished this paper.

\subsection{First Definitions} Let $\mathcal{D}$ be the dyadic grid $\mathcal{D}:= \left\{I\subset \R : I = [k 2^{-j}, (k+1) 2^{-j}); k, j \in \Z\right\}$.

We say that $w$ is a {\it weight} if it is a locally integrable function on the real line,
positive almost everywhere (with respect to the Lebesgue measure).
Let $\av{w}{J}$ be the average of a weight $w$ over a given interval $J \subset \R$:
$$
\av{w}{J} := \frac{1}{\mdl{J}} \int_J w\, dx
$$
and $\Delta_J w$ be defined by
$$
\Delta_J w := \av{w}{J^+} - \av{w}{J^-},
$$
where $J^+$ and $J^-$ are left and right dyadic children of the interval $J$.
\begin{defin}
A weight $w$ belongs to the dyadic \textit{Muckenhoupt class} $A_p^d$ whenever its dyadic Muckenhoupt constant $[w]_{A_p^d}$ is finite:
\begin{equation}\label{defAp}
[w]_{A_p^d} := \sup_{J\in \mathcal{D}} \; \av{w}{J} \av{w^{-\frac{1}{p-1}}}{J}^{p-1} < \infty.
\end{equation}
\end{defin}
\begin{zamech} \label{z: def Ap}
The inequality (\ref{defAp}) can be rewritten in the following way:
$$
0 \leqslant \av{w^{-\frac{1}{p-1}}}{J} - \av{w}{J}^{-\frac{1}{p-1}} \leqslant \left([w]_{A_p^d}^{\frac{1}{p-1} } -1 \right) \av{w}{J}^{-\frac{1}{p-1}}.
$$
We will use this way to write definitions of Reverse H\"{o}lder and Muckenhoupt classes later in the proof of Theorem \ref{our_Buckley_thm}.
\end{zamech}
Note that by H\"{o}lders inequality, $[{w}]_{A_p^d} \geqslant 1$ holds for all $1<p<\infty$, as
well as the following inclusion:
$$
if \;\;\; 1<p\leqslant q < \infty \;\;\; then \;\;\; A_p^d\; \subseteq
A_q^d, \; \; \; \; [{w}]_{A_q^d} \; \leqslant \; [{w}]_{A_p^d}.
$$

So, for $1<p<\infty$ Muckenhoupt classes $A_p^d$ form an increasing chain. There are two natural limits of it  - as $p$ approaches $1$ and as $p$ goes to $\infty$. We will be interested in the limiting case as $p \rightarrow \infty$, $A_\infty^d = \bigcup_{p>1} A_p^d$. There are several equivalent definitions of it, we will state one that we are going to use (the natural limit of $A_p^d$ conditions, that also defines the $A_\infty^d$ constant of the weight $w$), for other equivalent definitions see \cite{GarciaCuervaRubioDeFrancia:85}, \cite{Grafakos:03} or \cite{Stein:93}.
\begin{equation}\label{defAinfty}
w \in A_\infty^d  \;\;\;\;\;\;\Longleftrightarrow\;\;\;\;\;\;[w]_{A_\infty^d} :=\;\;\; \sup_{J\in \mathcal{D}} \;\;\av{w}{J} \;e^{-\av{\log w}{J}} \;\; < \infty,
\end{equation}
where $\log$ stands for the regular natural logarithm.

\begin{zamech} \label{z: def Aoo}
The inequality (\ref{defAinfty}) can be rewritten in the following way:
$$
0 \leqslant \log \av{w}{J} - \av{\log w}{J} \leqslant \log \, [w]_{A^d_\infty}.
$$
\end{zamech}

Note also that if a weight $w$ belongs to the Muckenhoupt class $A_p^d$ for some $p>1$, or, equivalently, to the class $A_\infty^d$, then $w$ has to be a dyadicaly doubling weight, i.e. its dyadic doubling constant ${\mathcal{D}}^d (w):= sup_{I\in D} \frac{\av{w}{F(I)}}{\av{w}{I}}$, where $F(I)$ stands for the dyadic parent of the interval $I$, has to be finite.

\begin{defin}A weight $w$ belongs to the {\it dyadic Reverse H\"{o}lder class} $RH_p^d$ ($1<p<\infty$) if
\begin{equation}\label{defRHp}
[w]_{RH_p^d}\;:=\;\sup_{J\in \mathcal{D}} \;\frac{ \av{w^p}{J}^{1/p}}{ \av{w}{J}} < \infty.
\end{equation}
\end{defin}

\begin{zamech} \label{z: def RHp}
The inequality (\ref{defRHp}) can be rewritten in the following way:
$$
0 \leqslant \av{w^{p}}{J} - \av{w}{J}^{p} \leqslant \left([w]_{RH_p^d}^{p } -1 \right) \av{w}{J}^{p}.
$$
\end{zamech}

Note that by H\"{o}lders inequality the dyadic Reverse H\"{o}lder classes satisfy:
$$
if \;\;\; 1<p\leqslant q<\infty, \; \; \; then \; \; \; RH_q^d \;
\subseteq \; RH_p^d \;\;\; and \;\;\; 1 \; \leqslant \; [w]_{RH_p^d}
\; \leqslant \; [w]_{RH_q^d},
$$
which is similar to the inclusion chain of the $A_p^d$ classes, except inclusion runs in the opposite direction. And similarly we can consider two limiting cases $RH_\infty^d$ (the smallest) and $RH_1^d$ (the largest). Same as in the case of Muckenhoupt classes we are more interested in the largest one, let us call it $RH_1^d := \bigcup_{p>1} RH_p^d$.

The natural limit as $p \rightarrow 1^+$ of the Reverse H\"{o}lder inequalities is the following condition, which we will take as a definition of the class $RH_1^d$:
\begin{equation}\label{defRH1}
w \in RH_1^d \;\;\; \Longleftrightarrow \;\;\; [w]_{RH_1^d} \; := \; \sup_{J\in \mathcal{D}}\left\langle\frac{w}{\av{w}{J}} \log \frac{w}{\av{w}{J}}\right\rangle_J\;\;<\;\infty,
\end{equation}
where $\log$ is a regular logarithm base $e$, which could be negative. Nevertheless, by the Jensen inequality $RH_1$ constant defined this way is always nonnegative.

The $RH_1^d$ constant of the weight $w$ is the natural limit of $RH_p^d$ constants in the sense that for every interval $I \in \mathcal{D}$
\begin{equation}\label{RH1aslimitRHp}
\av{\frac{w}{\av{w}{I}} \log \frac{w}{\av{w}{I}}}{I}\; = \; \lim_{p \rightarrow 1^+} \frac{p}{p-1} \log\; \frac{\av{w^p}{I}^\frac{1}{p}}{\av{w}{I}}\;
\end{equation}

We want to make one remark about this definition.
\begin{zamech} \label{z: def RH1}
The inequality \ref{defRH1} can be rewritten in the following way:
$$
\av{w\log(w)}{J}\leqslant \av{w}{J} \log \av{w}{J}+Q\av{w}{J} \;\;\; \forall J\in \mathcal{D}.
$$
Note that since function $x \log x$ is concave, by Jensen's inequality we also have
$$
\av{w}{J} \log \av{w}{J} \leqslant \av{w\log(w)}{J}.
$$
\end{zamech}



In the continuous case, for $A_\infty$ and $RH_1$ in 1974 Coifman and Fefferman showed that $A_\infty = \bigcup_{p>1} RH_p = RH_1$, in the dyadic case it is not true. One can only claim the inclusion $A_\infty^d \subset RH_1^d$. As for the other inclusion, it only holds for the dyadicaly doubling weights since, unlike the $A_p^d$ weights, dyadic Reverse H\"{o}lder weights do not have to be doubling. An example of such weight can be found in Buckley \cite{Buckley:90}.




{\bf Different ways to define $RH_1$ constant of the weight $w$.}\label{diffways}
 First, observe that, trivially, logarithm in the definition of the $RH_1$ constant can be replaced by $\log^+ (x)$,
$\left( \log^+ (x) = \max (\log x,0) \right)$ or $\log (e+x)$, which will, however, increase the $RH_1$ constant slightly.


Secondly, from the Stein lemma (see \cite{Stein:1969}), we know that
$$
3^{-n} \; \av{M(f \chi_I) }{I} \; \leqslant \; \av{ f \; \log \left( e + \frac{f}{\av{f}{I}} \right) }{I} \; \leqslant \; 2^n \; \av{M(f \chi_I)}{I}
$$
Thus an equivalent way to define $RH_1$ constant is
\begin{equation}
\label{def_equiv1_RH1} [w]_{RH_1^{d\prime}} \; := \; \sup \frac{1}{w(I)} \int_I M(w \chi_I)dx,
\end{equation}
which, indeed, is one of the ways to define class $A_\infty$, see for example \cite{Wilbook} or \cite{HytPer2011}.

One can also define dyadic Reverse H\"{o}lder and Muckenhoupt constants using Luxemburg norms. Same is true for $RH_1^d$-constant. Let us first define Luxemburg norm of a function in the following way: for an Orlitz function $\Phi: [0,\infty]\mapsto [0,\infty]$, we define $\norm{w}{\Phi(L),I}$ to be:
$$
\norm{w}{\Phi(L),I} \; := \; \inf \left\{ \lambda > 0\colon \; \frac{1}{|I|} \int_I \Phi\left( \frac{|w|}{\lambda} \right) \; \leqslant \; 1 \right\}.
$$
Iwaniec and Verde in \cite{IwVer} showed that for every $w$ and $I\subset \R^n$
$$
\norm{w}{L\log L,I} \; \leqslant \; \int_I w\log \left( e + \frac{w}{\av{w}{I}} \right) dx \; \leqslant \; 2 \norm{w}{L\log L,I},
$$
so another equivalent definition of the $RH_1$ constant of the weight $w$ is
\begin{equation}
\label{def_equiv2_RH1} [w]_{RH_1^{d\prime \prime}} \; := \; \sup_{I\in \mathcal{D}} \frac{\norm{w}{L\log L,I}}{\norm{w}{L,I}}.
\end{equation}

\subsection{First Main result of the paper}
In this section we carefully state the first result of the paper, and then explain other questions we study.

In fact, we prove the following sharp relationship between $RH_1^d$ and $A_\infty^d$ constants:
\begin{theorem}[\bf Main Result 1 : comparability of $RH_1$ and $A_\infty$ constants]
\label{theorem_RH1<Ainfty} If weight $w$ belongs to the Muckenhoupt class $A_\infty^d$, then $w \in RH_1^d$. Moreover,
\begin{equation}\label{maininequality}
[w]_{RH_1^d} \;\leqslant \; C \;  [w]_{A_\infty^d},
\end{equation}

where the constant $C$ can be taken to be $\log(16)$ ($C=\log(16)$). Moreover, the constant $C=\log(16)$ is the best possible.
\end{theorem}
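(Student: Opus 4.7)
\textbf{Proof plan for Theorem \ref{theorem_RH1<Ainfty}.}

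The plan is to prove the inequality by the Bellman function method, with the Bellman function built so that it encodes exactly the $A_\infty^d \to RH_1^d$ transition on a convex but dyadically non-trivial domain. Let $Q:=[w]_{A_\infty^d}$. By Remarks \ref{z: def Aoo} and \ref{z: def RH1}, the statement we need on each $J\in\D$ is
\[
\ave{w\log w}_J - \ave{w}_J\log\ave{w}_J \;\leqslant\; \log(16)\cdot Q\cdot \ave{w}_J,
\]
under the pointwise-on-dyadic-intervals constraint $0\leqslant \log\ave{w}_K-\ave{\log w}_K\leqslant \log Q$ for every $K\in\D$. Accordingly, I would track the three averages $x_1=\ave{w}_J$, $x_2=\ave{\log w}_J$, $x_3=\ave{w\log w}_J$, which live in the convex set
\[
\Omega_Q=\{(x_1,x_2,x_3)\colon x_1>0,\; x_3\geqslant x_1\log x_1,\; 0\leqslant \log x_1-x_2\leqslant \log Q\},
\]
and seek a function $B\colon \Omega_Q\to\R$ with (i) the obstacle $B(x_1,x_2,x_3)\geqslant x_3-x_1\log x_1$, (ii) the upper bound $B(x_1,x_2,x_3)\leqslant \log(16)\,Q\,x_1$, and (iii) the dyadic concavity $B(P)\geqslant \tfrac12(B(P^+)+B(P^-))$ for every pair $P^\pm\in\Omega_Q$ with midpoint $P\in\Omega_Q$.

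My next step would be a homogeneity reduction: under $w\mapsto\lambda w$ one checks $(x_1,x_2,x_3)\mapsto(\lambda x_1,\,x_2+\log\lambda,\,\lambda x_3+\lambda x_1\log\lambda)$, while both the obstacle and the $A_\infty$ constraint scale cleanly. Hence the extremal $B$ must have the form $B=x_1\,f(t,s)$ with $t=\log x_1-x_2\in[0,\log Q]$ and $s=(x_3-x_1\log x_1)/x_1\geqslant 0$. The free function $f$ then has to satisfy $f\geqslant s$, $f\leqslant \log(16)\,Q$, and a concavity condition inherited from (iii); as in \cite{BR}, the sharp $f$ is read off from an ODE along extremal characteristics, with boundary data forced to match the obstacle on $t=\log Q$ and the upper bound at the ``maximal'' point of the characteristic. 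The numerical constant $\log 16=4\log 2$ will come out of this ODE; in fact it is exactly the factor produced by the dyadic (rather than continuous) concavity, where splittings occur with step $\tfrac12$ and the extremal characteristic of $f$ reaches the free boundary in four ``unit'' dyadic steps.

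Once $B$ is constructed, the Bellman induction is standard: starting from any $J\in\D$ and iterating the concavity inequality through the dyadic children, one passes to the limit (using the almost-everywhere differentiation of averages) to conclude
\[
\ave{w\log w}_J-\ave{w}_J\log\ave{w}_J \;\leqslant\; B(x_1,x_2,x_3) \;\leqslant\; \log(16)\,Q\,\ave{w}_J.
\]
For sharpness, I would run the Bellman machine in reverse: exhibit a weight $w$ whose averages on a nested sequence of dyadic intervals trace out precisely the extremal characteristic of $f$ in $\Omega_Q$. Such $w$ can be built by a martingale-type recursion where each dyadic step is calibrated so that $t$ stays on the boundary $t=\log Q$ and $s$ saturates the upper bound, giving the matching lower estimate $[w]_{RH_1^d}=\log(16)\,[w]_{A_\infty^d}$ in the limit.

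The main obstacle is the third condition above: although $\Omega_Q$ is convex in $(x_1,x_2,x_3)$, it is genuinely non-convex in the reduced $(t,s)$ coordinates, because $s$ depends on $x_1\log x_1$. Consequently, the midpoint of two admissible $(t^\pm,s^\pm)$ need not land in the effective reduced domain, so the naive ``concavity in $(t,s)$'' is false; one must patch an inner regime of $f$ (where the classical concavity inequality holds) with a boundary regime near $t=\log Q$, checking by hand the compatibility across the interface. This is exactly the step that lengthens the dyadic proof compared to the continuous one from \cite{BR}, and where I expect the majority of the technical effort to go.
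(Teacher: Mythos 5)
Your proposal contains a genuine error that derails the plan at the outset: the set
\[
\Omega_Q=\{(x_1,x_2,x_3)\colon x_1>0,\; x_3\geqslant x_1\log x_1,\; 0\leqslant \log x_1-x_2\leqslant \log Q\}
\]
is \emph{not} convex. The constraint $\log x_1-x_2\leqslant\log Q$, i.e.\ $x_2\geqslant \log(x_1/Q)$, is the epigraph of a concave function, hence non-convex; this is exactly the same non-convexity as in the paper's two-dimensional domain $\Omega_Q=\{(x,y)\colon 1\leqslant xe^{-y}\leqslant Q\}$ (just relabel $x_1=x$, $x_2=y$). So the difficulty is not ``genuinely non-convex in the reduced $(t,s)$ coordinates'' as you say at the end -- it is already present in the raw variables, and it is the central obstacle of the whole dyadic problem, not a side effect of your change of variables. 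Because you start from the incorrect belief that the domain is convex, you end up proposing to ``patch an inner regime with a boundary regime,'' which is not a viable plan and does not correspond to anything in the actual argument.

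The idea you are missing is the paper's Theorem~\ref{largerB}: one keeps the explicit two-variable Bellman function $B_Q(x,y)$ from \cite{BR} (no third coordinate $x_3$ is needed, since $B_Q(x,y)=\sup\ave{w\log w}$ already gives the sharp obstacle majorant), but replaces the threshold $Q$ by a strictly larger $Q_0$, chosen via an explicit transcendental equation so that $1<Q<Q_0<CQ$ uniformly in $Q$. The theorem then shows the dyadic midpoint concavity $2B_{Q_0}(z)\geqslant B_{Q_0}(z_+)+B_{Q_0}(z_-)$ whenever $z,z_\pm$ lie in the \emph{smaller} domain $\Omega_Q$, even though the chord $[z_-,z_+]$ may exit $\Omega_{Q_0}$. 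Proving this occupies most of the paper: after a homogeneity reduction it is a constrained minimization over the boundary configurations, done with Lagrange multipliers in the ``$\alpha$-variables'' $\alpha_\pm=y_\pm-\log(x_\pm/Q_0)$, and the number $Q_0$ -- and hence $C$ -- is determined by the single worst-case vertex $z_-\in\Gamma$, $z_+\in\Gamma_Q$. Your heuristic that $\log 16 = 4\log 2$ arises from ``four dyadic steps of size $1/2$'' has no counterpart here; the constant emerges from the equation for $Q_0$ and is not generated by counting martingale steps. Finally, as Remark~\ref{cannotenlarge} in the paper explains, one cannot simply take any $Q_0=CQ$ and rely on convexity of $B_{Q_0}$ inside $\Omega_{Q_0}$: for every $C$ there are admissible triples whose chord leaves $\Omega_{CQ}$, which is precisely why the delicate Lagrange-multiplier analysis is unavoidable and why the naive enlargement-and-convexity route (which your ``patching'' sketch gestures toward) fails.
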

Bellman function proof of this theorem can be found in Section \ref{s:proofH1<eAinfty}. An independent proof of the analogue of this theorem for the constant $[w]_{RH_1^{d\prime}}$ was recently independently obtained in \cite{HytPer2011}.

Note that all of the above is true in the continuous case and can be found in \cite{BR} (with sharp constant $C=e$, and with the double exponential lower bound). Note also that the lower bound (Theorem 1.2 in \cite{BR}) in dyadic case cannot possibly hold since the class $RH_1^d$ is strictly larger than $A_\infty^d$.


\section{Summation conditions on weights}

\label{s: Buckley}

In this section we will introduce and discuss an important set of inequalities that characterize the dyadic Reverse H\"{o}lder and Muckenhoupt classes. We are mostly interested in the dyadic results here, so we will follow Buckley \cite{Buckley:93_2}. Note that the inequalities we are going to discuss in this section have continuous analogues, and many facts and questions here apply to the continuous case as well (see \cite{FeffermanKenigPipher:91}).

As we discussed earlier, $RH_1^d \ne A_\infty^d$ because all dyadic Muckenhoupt conditions imply that the weight is dyadically doubling
, while dyadic Reverse H\"{o}lder conditions allow nondoubling weights (in the continuous case both Reverse H\"{o}lder and Muckenhoupt conditions imply continuous doubling property). For the dyadically doubling weights the $RH_1^d$ and $A_\infty^d$ conditions are equivalent.

We will now state a theorem that characterizes dyadic Reverse H\"{o}lder and Muckenhoupt classes via summation conditions.
We attribute this theorem to Buckley, however all parts but the Buckley's inequality (part $(2)$) in the continuous case and part $(4)$ in the dyadic case first appeared in \cite{FeffermanKenigPipher:91} and are due to Fefferman, Kenig and Pipher.

\begin{theorem} {\bf [Buckley'93]} \label{Buckley_theorem}

Suppose $1 < p < \infty$ and $w$ is a doubling weight. Then

(1) $w \in RH_p^d$ if and only if on every dyadic interval $J$

\begin{equation} \label{Buckley_RHp}
\frac{1}{\mdl{J}}\sum_{I \in \mathcal{D}(J)}{ \left(\frac{\Delta_I w}{\av{w}{I}}\right)^2 \av{w}{I}^p \mdl{I}}
\leq
K \av{w}{J}^p,
\end{equation}

moreover, $K \leq C [w]_{RH_p^{d}}^p$.

(2) (Buckley's inequality) $w \in RH_{1}^{d}$ if and only if for some $K>0$ on every dyadic interval $J$

\begin{equation} \label{Buckley_RH1}
\frac{1}{\mdl{J}}\sum_{I \in \mathcal{D}(J)}{ \left(\frac{\Delta_I w}{\av{w}{I}}\right)^2  \av{w}{I} \mdl{I}}
\leq
K\av{w}{J}.
\end{equation}

\bigskip

(3) $w \in A_{p}^{d}$ if and only if on every dyadic interval $J$

\begin{equation} \label{Buckley_Ap}
\frac{1}{\mdl{J}}\sum_{I \in \mathcal{D}(J)}{  \left(\frac{\Delta_I w }{\av{w}{I}}\right)^2  (\av{w}{I})^{-\frac{1}{p-1}} \mdl{I}}
\leq
K \av{w}{J}^{-\frac{1}{p-1}}.
\end{equation}


(4) (Fefferman - Kenig - Pipher inequality) $w \in A_{\infty}^{d}$ if and only if on every dyadic interval $J$

\begin{equation} \label{Buckley_Ainfty}
\frac{1}{\mdl{J}}\sum_{I \in \mathcal{D}(J)}{  \left(\frac{\Delta_I w }{\av{w}{I}}\right)^2  \mdl{I}}
\leq
C \log [w]_{A_{\infty}^{d}}.
\end{equation}

\end{theorem}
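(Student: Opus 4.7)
The plan is to derive all four parts from a single telescoping identity, specialized to four different choices of convex function $\phi$. For a $C^2$ function $\phi\colon (0,\infty)\to\R$ and a dyadic interval $I$, define the second difference
\[
D_I(\phi,w) := \frac{\phi(\av{w}{I^+})+\phi(\av{w}{I^-})}{2}-\phi(\av{w}{I}).
\]
First I would establish the identity
\[
\av{\phi(w)}{J} - \phi(\av{w}{J}) \;=\; \frac{1}{|J|}\sum_{I\in\mathcal{D}(J)}|I|\,D_I(\phi,w),
\]
by telescoping $\phi\circ\av{w}{\cdot}$ over generations of dyadic subintervals of $J$ and passing to the limit via Lebesgue's differentiation theorem. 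This identity (or its sharper Bellman-function counterpart) should be the content of Lemma \ref{main_Lemma}.

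Next I would quantify $D_I(\phi,w)$ in terms of $(\Delta_I w)^2$. Writing $\av{w}{I^\pm}=\av{w}{I}\pm\tfrac12\Delta_I w$ and Taylor-expanding $\phi$ to second order around $\av{w}{I}$, the linear terms cancel after averaging, leaving
\[
D_I(\phi,w)\;=\;\frac{(\Delta_I w)^2}{16}\bigl(\phi''(\xi_+)+\phi''(\xi_-)\bigr)
\]
for points $\xi_\pm$ between $\av{w}{I}$ and $\av{w}{I^\pm}$. Since $w$ is dyadically doubling, $\av{w}{I^\pm}\asymp\av{w}{I}$, and for each specific $\phi$ below $\phi''$ is (up to a constant) a single power of its argument, so $\phi''(\xi_\pm)\asymp\phi''(\av{w}{I})$ with constants depending only on the doubling constant. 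Combining, $D_I(\phi,w)\asymp (\Delta_I w)^2\,\phi''(\av{w}{I})$.

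Then I would specialize $\phi$ to match each case with its corresponding reformulation of the class definition from Remarks \ref{z: def Ap}, \ref{z: def Aoo}, \ref{z: def RHp}, and \ref{z: def RH1}. For part (1), take $\phi(x)=x^p$: the left side of the identity is $\av{w^p}{J}-\av{w}{J}^p$, bounded by $([w]_{RH_p^d}^p-1)\av{w}{J}^p$ iff $w\in RH_p^d$, while $\phi''(\av{w}{I})(\Delta_I w)^2=p(p-1)\av{w}{I}^{p-2}(\Delta_I w)^2$ produces the summand in \eqref{Buckley_RHp}. For part (2), take $\phi(x)=x\log x$: then $\phi''(x)=1/x$ yields the summand $(\Delta_I w)^2/\av{w}{I}$ in \eqref{Buckley_RH1}, and the left side becomes $\av{w\log w}{J}-\av{w}{J}\log\av{w}{J}$, controlled by $[w]_{RH_1^d}\av{w}{J}$ by Remark \ref{z: def RH1}. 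For part (3), take $\phi(x)=x^{-1/(p-1)}$ (convex since the exponent is negative), and for part (4) take $\phi(x)=-\log x$; in each case the left side of the identity matches the quantity bounded by the class constant per Remarks \ref{z: def Ap} and \ref{z: def Aoo}, and $\phi''$ produces the correct summand in \eqref{Buckley_Ap} and \eqref{Buckley_Ainfty} respectively.

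The main obstacle is the two-sided comparison $D_I(\phi,w)\asymp(\Delta_I w)^2\phi''(\av{w}{I})$. The upper bound produces the summation condition from class membership, and the lower bound produces the reverse implication; both require control on $\xi_\pm$, which is precisely the role of the doubling hypothesis. Deriving the identity and the two-sided comparison via the Bellman function technique (as in Lemmas \ref{main_Lemma} and \ref{inv_lemma}) rather than Taylor expansion should deliver sharper constants and the full quantitative \emph{comparability} between the Buckley sums and the class constants asserted in Theorem \ref{our_Buckley_thm}.
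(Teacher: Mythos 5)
Your proposal is correct for the stated theorem (which assumes doubling) and it shares the paper's backbone. The telescoping identity
\[
\av{\phi(w)}{J}-\phi(\av{w}{J})\;=\;\frac{1}{|J|}\sum_{I\in\mathcal{D}(J)}|I|\,D_I(\phi,w),\qquad D_I(\phi,w)=\frac{\phi(\av{w}{I^+})+\phi(\av{w}{I^-})}{2}-\phi(\av{w}{I}),
\]
is precisely what underlies Lemmas \ref{main_Lemma} and \ref{inv_lemma}: their ``Bellman functions'' $B(u,v)=v-A(u)$ and $B(u,v)=\alpha v+A(u)$ are just devices for telescoping this quantity down the dyadic tree. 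Where you genuinely differ is in how the two-sided comparison $D_I(\phi,w)\asymp(\Delta_Iw)^2\phi''(\av{w}{I})$ is obtained. You apply a pointwise Lagrange remainder and use doubling in both directions to pin $\phi''(\xi_\pm)$ to $\phi''(\av{w}{I})$. The paper instead writes the exact integral form
\[
D_I(\phi,w)=\frac{t^2}{2}\int_{-1}^{1}(1-|s|)\,\phi''(\av{w}{I}+ts)\,ds,\qquad t=\tfrac12\Delta_Iw,
\]
from which the lower bound $D_I\geq\tfrac{1}{4}t^2\phi''(\av{w}{I})$ follows from nothing more than monotonicity of $\phi''$ (this is exactly condition \eqref{condition_A''} together with the first part of the technical Proposition), with no doubling whatsoever. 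That is what lets the paper drop doubling in the ``class $\Rightarrow$ summation'' direction, and hence entirely in parts (1) and (2) of Theorem \ref{our_Buckley_thm1}; your Taylor route cannot do this, since pointwise control of $\phi''(\xi_\pm)$ by $\phi''(\av{w}{I})$ really does need the child-average $\av{w}{I^-}$ kept away from $0$. In the other direction (``summation $\Rightarrow$ class''), doubling genuinely is required for $\phi(x)=x^{-1/(p-1)}$ and $\phi(x)=-\log x$, because when $\av{w}{I^-}$ is near $0$ one has $D_I$ exceeding $t^2\phi''(\av{w}{I})$ by an arbitrary factor; there the paper (Lemma \ref{inv_lemma}(2)) uses the doubling-induced restriction $t<\tfrac{C-1}{C}x$, which is the same role doubling plays in your argument. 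So: same core decomposition, but your pointwise Taylor estimate trades the paper's averaged second-difference identity for simplicity, at the cost of not recovering the non-doubling strengthening that the paper extracts for parts (1) and (2).

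One small caution if you carry this out: in the paper's Lemma \ref{main_Lemma} the displayed condition \eqref{condition_A'''} carries the wrong sign; the technical Proposition (part 2) makes clear that for convex $A$ the correct hypothesis is $A(x)-\tfrac{A(x-t)+A(x+t)}{2}+\alpha t^2A''(x)\leqslant 0$, i.e.\ $D_I(A,w)\geqslant\alpha t^2A''(x)$, which is also what your Taylor expansion produces for convex $\phi$. Keep your signs aligned with that.
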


The Buckley's inequality (part $(2)$) is the one we are mostly interested in since as we will see later it characterizes class $RH_1^d$; it is also the only one stated without the sharp constant. In \cite{Wittwer:00} Wittwer showed that in the case $w \in A_2^d$ Buckley's inequality holds with $K = C [w]_{A_2^d}$ and this linear dependence on the $A_2^d$ constant of the weight $w$ is sharp, which is the best known result for Buckley's inequality.  Also, in the Fefferman-Kenig-Pipher inequality the sharp constant is $C=8$, it was obtained by Vasyunin using the Bellman function method in \cite{Vasyunin:10}.

 Using the method of Bellman functions we are going to show that in Buckley's inequality  $K \leq C [w]_{RH_{1}^{d}}$. We also show that the assumption that $w$ is a doubling weight can be dropped. Finally, we show that the above four sums also satisfy the lower bound estimates in terms of the corresponding constants. Let us state our second main result in this paper now.

 We start with the following lemma, from which Theorem \ref{Buckley_theorem} will follow.

 \begin{lemma}\label{main_Lemma}
 Let $A(x)$ be a convex twice differentiable function on $(0,\infty)$ such that for all numbers $x$ and $t$, such that $x, x\pm t$ are in the domain of $A$, the following inequality holds:
 \begin{equation}\label{condition_A'''}
 A(x)-\frac{A(x-t)+A(x+t)}{2} + \alpha \, t^2 A''(x) \geqslant 0,
 \end{equation}
 with some constant $\alpha>0$ independent of $x$ and $t$.
 Then for every weight $w$ and an interval $J$ the following inequality holds:
 \begin{equation}\label{A_Buckley}
 \frac{1}{|J|} \sum_{I\in \mathcal{D}(J)} \left(\Delta_I w\right)^2 A^{\prime\prime}(\av{w}{I}) |I| \leqslant C  \left(\av{A(w)}{J} - A(\av{w}{J})\right).
 \end{equation}

 Moreover, if the second derivative of $A$ satisfies the following inequality for every $x \in (0, \infty)$ and every $\varepsilon \geqslant 0$
  \begin{equation}
  \label{condition_A''}
  \int_{-1}^{1} (1-|t|) \; A^{\prime\prime} (x + \varepsilon t) \; dt \; \geqslant \; q \; A^{\prime\prime}(x)
  \end{equation}
 with some positive constant $q$ uniformly on $x$ and $\varepsilon$, then  
the inequality (\ref{A_Buckley}) holds with constant $C = 8 \frac{1}{q}$.

 \end{lemma}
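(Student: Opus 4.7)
The plan is to run the standard Bellman function argument with the ``Jensen gap'' as the candidate function. On the domain $\Omega := \{(x,X)\colon x>0,\; X\geq A(x)\}$, set $\mathbf{B}(x,X) := X - A(x)$, which is non-negative by convexity of $A$; evaluated on the averages of $w$ over the top interval, $\mathbf{B}(\langle w\rangle_J, \langle A(w)\rangle_J) = \langle A(w)\rangle_J - A(\langle w\rangle_J)$ is exactly the right-hand side of~(\ref{A_Buckley}). The whole point is to relate a one-step ``Bellman decrement'' of this function to the summand $(\Delta_I w)^2 A''(\langle w\rangle_I)\,|I|$ appearing on the left.

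For each dyadic $I\subset J$ write $x_I := \langle w\rangle_I$, $X_I := \langle A(w)\rangle_I$, $t_I := \Delta_I w/2$. The martingale identities $x_I = \tfrac12(x_{I^+}+x_{I^-})$ and $X_I = \tfrac12(X_{I^+}+X_{I^-})$ yield
\begin{equation*}
|I|\mathbf{B}(x_I,X_I) - |I^+|\mathbf{B}(x_{I^+},X_{I^+}) - |I^-|\mathbf{B}(x_{I^-},X_{I^-}) \;=\; |I|\,\delta_I,
\end{equation*}
where $\delta_I := \tfrac12\bigl(A(x_{I^+})+A(x_{I^-})\bigr) - A(x_I)\geq 0$ is the convexity deficit of $A$ at $x_I$ with step $t_I$. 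Summing telescopically over $I\in\mathcal{D}(J)$ of generation $\leq N$ collapses the left side to $|J|\mathbf{B}(x_J,X_J) - \sum_{\text{gen }N+1}|I|\mathbf{B}(x_I,X_I)$, and since every $\mathbf{B}$-value is non-negative, dropping the tail and sending $N\to\infty$ produces
\begin{equation*}
\sum_{I\in \mathcal{D}(J)} |I|\,\delta_I \;\leq\; |J|\bigl(\langle A(w)\rangle_J - A(\langle w\rangle_J)\bigr).
\end{equation*}

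It remains to bound $\delta_I$ from below by a constant times $t_I^2 A''(x_I) = \tfrac14(\Delta_I w)^2 A''(\langle w\rangle_I)$. Under the stronger hypothesis~(\ref{condition_A''}) this is immediate from the integrated Taylor identity
\begin{equation*}
\frac{A(x+\varepsilon)+A(x-\varepsilon)}{2} - A(x) \;=\; \frac{\varepsilon^{2}}{2}\int_{-1}^{1}(1-|t|)\,A''(x+\varepsilon t)\,dt,
\end{equation*}
which, combined with~(\ref{condition_A''}) applied at $\varepsilon=t_I$, gives $\delta_I \geq \tfrac{q}{2} t_I^2 A''(x_I) = \tfrac{q}{8}(\Delta_I w)^2 A''(\langle w\rangle_I)$; plugging this into the telescoped estimate produces the sharp constant $C=8/q$ in~(\ref{A_Buckley}). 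The main obstacle is the first, weaker assertion under only~(\ref{condition_A'''}), because (\ref{condition_A'''}) bounds the convexity deficit from \emph{above} by $\alpha t^2 A''(x)$ rather than from below, so the Bellman function $X - A(x)$ by itself does not close; to still extract a comparable lower bound on $\delta_I$ one has to either enlarge the Bellman domain with an auxiliary variable (e.g.\ $\langle A'(w)\rangle_I$) so that the hypothesis can be coupled with the convexity of $A$ to produce the required concavity of a modified $\mathbf{B}$, or use qualitative regularity of $A$ to pass from the upper bound in (\ref{condition_A'''}) to a matching lower bound with an implicit constant depending on $A$; this is the delicate part of the argument.
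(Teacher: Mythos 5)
Your proof of the second (quantitative) assertion is correct and follows exactly the paper's strategy: the Bellman function $\mathbf{B}(x,X)=X-A(x)$, the one-step convexity deficit $\delta_I$, the telescope using non-negativity of $\mathbf{B}$ on the graph, and the Taylor-form identity
$$
\frac{A(x+\varepsilon)+A(x-\varepsilon)}{2}-A(x)=\frac{\varepsilon^2}{2}\int_{-1}^1(1-|t|)A''(x+\varepsilon t)\,dt
$$
together with hypothesis (\ref{condition_A''}) to conclude $\delta_I\geqslant \frac{q}{8}(\Delta_I w)^2 A''(\langle w\rangle_I)$, whence $C=8/q$. This is precisely the paper's argument, and you have in fact tracked the factor of $4$ coming from $t_I=\Delta_I w/2$ more carefully than the printed proof does.

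The ``obstacle'' you flagged for the first assertion is not a real gap in the argument but a sign typo in the lemma's statement: condition (\ref{condition_A'''}) as printed reads $\geqslant 0$, which (given Jensen, $A(x)\leqslant\frac{A(x-t)+A(x+t)}{2}$) bounds the convexity deficit from \emph{above} by $\alpha t^2 A''(x)$ and, as you observe, points the wrong way. The intended hypothesis is
$$
A(x)-\frac{A(x-t)+A(x+t)}{2}+\alpha\,t^2 A''(x)\;\leqslant\;0,
$$
equivalently $\delta:=\frac{A(x-t)+A(x+t)}{2}-A(x)\geqslant \alpha t^2 A''(x)$. This is exactly what Part 2 of the Proposition in the paper's Section IV.2 proves (from (\ref{condition_A''})), and it is exactly the inequality invoked in the paper's own proof of the lemma, where they write $B(u,v)-\frac{B(u-t,v-s)+B(u+t,v+s)}{2}=-\bigl(A(u)-\frac{A(u-t)+A(u+t)}{2}\bigr)\geqslant \alpha t^2 A''(u)$. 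The reversed sign $\geqslant 0$ belongs to the companion Lemma \ref{inv_lemma}, which needs the upper bound on the deficit. Once (\ref{condition_A'''}) is read with $\leqslant 0$, your telescope closes immediately and gives the first assertion with $C=1/(4\alpha)$ (the $4$ again from $t_I=\Delta_I w/2$), so there is nothing further to do; no enlargement of the Bellman domain is needed.

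Minor remark: in the paper's printed proof the telescoped inequality is stated with $\alpha(\Delta_J w)^2$ rather than $\alpha(\Delta_J w/2)^2$, and the Proposition's proof has a spurious $(2t)^2$ in place of $t^2$; your accounting is the corrected one and is consistent with the claimed value $C=8/q$.
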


 The Bellman function proof of the Lemma \ref{main_Lemma} can be found in Section \ref{s: proof main lemma}.




\begin{zamech}
Note that if the second derivative of $A$ is a monotone function (\ref{condition_A''}) holds trivially with constant $q = \frac{1}{2}$, which makes Lemma \ref{main_Lemma} applicable to a large class of functions producing a number of new inequalities of Buckley's type. In particular, function $A(x)$ can be taken $A(x)= x^p, \; p>1$, $A(x) = x\log x$, $A(x)= x^{-\frac{1}{p-1}}, \; p>1$ or $A(x) = \log x$. In what follows we will see how these choices of the function $A(x)$ imply Buckley's theorem.
\end{zamech}

Now we want to introduce the ``reverse'' lemma, which is true for particular (most interesting for us) choices of the function $A$.
\begin{lemma}\label{inv_lemma}

(1) Let $A(x)$ be a function, defined on $(0,\infty)$ such that
\begin{equation}\label{A_reverse}
A(x)-\frac{A(x-t)+A(x+t)}{2}+\beta \, t^2 A''(x) \; \geqslant \; 0,
\end{equation}
holds with some positive constant $\beta$ independent of $x$ and $t$.

Then for every weight $w$ and an interval $J$
 \begin{equation}
 \frac{1}{|J|} \sum_{I\in \mathcal{D}(J)} \left(\Delta_I w\right)^2 \, A^{\prime\prime}(\av{w}{I}) \; |I| \; \geqslant \; C \left(\av{A(w)}{J} - A(\av{w}{J})\right).
 \end{equation}

Moreover, condition (\ref{A_reverse}) holds for functions $A(x)=x^p$, for all $p>1$ and for $A(x)=x\log x$.

(2) If $A$ satisfies the inequality
\begin{equation}\label{A_reverse_db}
A(x)-\frac{A(x-t)+A(x+t)}{2}+\beta \, t^2 A''(x) \; \geqslant \; 0
\end{equation}
whenever $0<t<\frac{C-1}{C}x$ (for $C>1$; $\beta$ depends on $C$). Then for every doubling weight $w$ and an interval $J$
 \begin{equation}
 \frac{1}{|J|} \sum_{I\in \mathcal{D}(J)} \left(\Delta_I w\right)^2 A^{\prime\prime}(\av{w}{I}) \; |I| \; \geqslant \; C \left(\av{A(w)}{J} - A(\av{w}{J})\right),
 \end{equation}
where the constant $C$ depends on the doubling constant of $w$.

Moreover, condition (\ref{A_reverse_db}) holds for functions $A(x)=x^{-\frac{1}{p-1}}$ for all $p>1$ and for $A(x)=-\log(x)$.
\end{lemma}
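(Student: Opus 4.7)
The plan is a direct Bellman-function-style telescoping based on the candidate $B(x,y)=y-A(x)$, which is nonnegative on the convex domain $\{y\geq A(x)\}$ by Jensen's inequality and covers every pair $(\av{w}{I},\av{A(w)}{I})$. Writing $x_I=\av{w}{I}$, $t_I=\Delta_I w/2$, so that $\av{w}{I^\pm}=x_I\pm t_I$, and using $|I^\pm|=|I|/2$, direct expansion produces the one-step identity
$$
|I|\bigl(\av{A(w)}{I}-A(x_I)\bigr)=|I^-|\bigl(\av{A(w)}{I^-}-A(x_{I^-})\bigr)+|I^+|\bigl(\av{A(w)}{I^+}-A(x_{I^+})\bigr)+|I|\!\left[\tfrac{A(x_I-t_I)+A(x_I+t_I)}{2}-A(x_I)\right].
$$
Iterating this $n$ times starting at $J$ and passing $n\to\infty$ yields the exact telescoping
$$
|J|\bigl(\av{A(w)}{J}-A(\av{w}{J})\bigr)=\sum_{I\in\mathcal{D}(J)}|I|\!\left[\tfrac{A(\av{w}{I^-})+A(\av{w}{I^+})}{2}-A(\av{w}{I})\right].
$$
The depth-$n$ remainder equals $\int_J[A(w(x))-A(\av{w}{I_n(x)})]\,dx$ for $I_n(x)$ the depth-$n$ descendant of $J$ containing $x$; Lebesgue differentiation gives pointwise convergence, and Jensen's inequality $A(\av{w}{I_n(x)})\leq\av{A(w)}{I_n(x)}$ together with $L^1$-convergence of the dyadic martingale $\av{A(w)}{I_n(x)}$ provides the uniform integrability that kills this term in the limit (assuming $A(w)\in L^1(J)$, which we may, else both sides of the target inequality are infinite).

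With the identity in hand, part (1) is immediate: hypothesis (\ref{A_reverse}) applied at every $I$ bounds each bracket by $\beta t_I^2 A''(x_I)=\frac{\beta}{4}(\Delta_I w)^2 A''(\av{w}{I})$, so summation gives the claim with constant $C=4/\beta$. For part (2), dyadic doubling of $w$ with constant $D$, combined with $\av{w}{I^-}+\av{w}{I^+}=2\av{w}{I}$ and $\av{w}{I^\pm}\geq\av{w}{I}/D$, forces $|t_I|\leq\tfrac{D-1}{D}x_I$ at every $I\in\mathcal{D}(J)$; the restricted hypothesis (\ref{A_reverse_db}) with $C=D$ therefore applies at each bracket, and the same summation yields a constant depending on $D$ and $\beta$.

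Each hypothesis for the four named functions reduces, via $s=t/x\in[0,1)$, to a one-variable inequality $\psi(s)\leq\beta\,c\,s^2$. For $A(x)=x^p$ ($p>1$) the even series expansion of $\psi(s)=\tfrac12[(1+s)^p+(1-s)^p]-1$ has $\psi(s)/s^2$ continuous on $[0,1]$ (value $p(p-1)/2$ at $0$, value $2^{p-1}-1$ at $1$), giving $\beta=\beta(p)$; for $A(x)=x\log x$ the analogous $\psi(s)=\tfrac12[(1+s)\log(1+s)+(1-s)\log(1-s)]$ attains its maximum of $\psi(s)/s^2$ at the endpoint, giving $\beta=\log 2$; for $A(x)=x^{-1/(p-1)}$ and $A(x)=-\log x$ the ratio $\psi(s)/s^2$ diverges as $s\to 1$ but stays bounded on $[0,(D-1)/D]$, which is precisely the range accessible to doubling weights. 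The main obstacle in the whole argument is the uniform-integrability step in the telescoping limit for $A=-\log$ and $A(x)=x^{-1/(p-1)}$, where $A$ is not bounded below (resp.\ is unbounded at $0$), so the Jensen bound is only one-sided and must be complemented by doubling/$A_\infty$-type control on $w$; everything else is direct expansion and a calculus exercise.
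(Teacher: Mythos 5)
Your proof is correct, and it takes a genuinely different route from the paper's. The paper introduces a \emph{second} Bellman function for this lemma, namely $B(u,v)=\alpha v + A(u)$, where $v$ is no longer $\av{A(w)}{I}$ but the running partial sum $v_I = \frac{1}{|I|}\sum_{R\in\mathcal D(I)}(\Delta_R w)^2 A''(\av{w}{R})|R|$; the hypothesis on $A$ then makes the map $I\mapsto |I|B(u_I,v_I)$ superadditive, one telescopes down to $|J|B(u_J,v_J)\geq \int_J B(u_n,v_n)\geq \int_J A(u_n)$, and Fatou plus Lebesgue differentiation finishes. You instead re-use the \emph{same} function $y-A(x)$ already used for Lemma \ref{main_Lemma}, derive the exact identity
$\av{A(w)}{J}-A(\av{w}{J}) = \frac{1}{|J|}\sum_{I}|I|\bigl[\tfrac12(A(\av{w}{I^-})+A(\av{w}{I^+}))-A(\av{w}{I})\bigr]$
by showing the depth-$n$ remainder vanishes, and then read off \emph{both} the upper estimate of Lemma \ref{main_Lemma} and the lower estimate of Lemma \ref{inv_lemma} from the two-sided pointwise bounds on the brackets. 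That unification is attractive and genuinely cleaner; the price is that you must close the limit, i.e.\ show $\int_J[\av{A(w)}{I_n(\cdot)}-A(\av{w}{I_n(\cdot)})]\to 0$, whereas the paper's route only needs the (one-sided) Fatou estimate. One small repair: you identify the Fatou step for $A=-\log x$ (which is unbounded below) as the main obstacle and propose to handle it via doubling/$A_\infty$ control of $w$, but that is not the natural fix and does not directly bound $A(\av{w}{I_n})$ from below. The clean observation is that both hypothesis \eqref{A_reverse_db} and the conclusion are invariant under replacing $A$ by $A+\ell$ for any affine $\ell$, since only $A''$ and second-order differences appear; choosing $\ell(x)=bx$ with $b>0$ makes $-\log x + bx$ bounded below and Fatou applies. (The paper's Fatou step has the same implicit issue and also leaves it unaddressed.) With that one adjustment, your constants, including the doubling restriction $|t_I|\leq\frac{D-1}{D}\av{w}{I}$ and the final $C=4/\beta$, all check out.
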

Bellman Function proof of Lemma \ref{inv_lemma} can be found in section \ref{s: proof inv lemma}.

\begin{zamech}
Note that in Lemma \ref{inv_lemma}, similarly to Lemma \ref{main_Lemma}, we can also write conditions (\ref{A_reverse}) and (\ref{A_reverse_db}) in the integral form, but in this case (\ref{A_reverse}) and (\ref{A_reverse_db}) are easier to check at least for the functions we are interested in.
\end{zamech}

From our lemmata, by taking $A(x) = x^p$ and $A(x) = x^{-\frac{1}{p-1}}$\; $p>1$, $A(x)= x \log x$ and  $A(x)=\log(x)$,  we derive the following theorem.

\begin{theorem} [\bf Main result 2 : Representation of bumped averages] \label{our_Buckley_thm1}

Suppose $1 < p < \infty$ and $w$ is weight. Then

(1){\bf(case $A(x)=x^p$, $p>1$)} There are real positive  constants $c$ and $C$ independent of the weight $w$, such that for every interval $J$

\begin{equation} \label{Buckley_RHp_1}
c (\av{w^p}{J} - \av{w}{J}^p)
\leqslant
\frac{1}{\mdl{J}}\sum_{I \in \mathcal{D}(J)}{ \left(\frac{\Delta_I w}{\av{w}{I}}\right)^2 \av{w}{I}^p \mdl{I}}
\leqslant
C (\av{w^p}{J} - \av{w}{J}^p).
\end{equation}

(2) {\bf(case $A(x)= x \log x$)}  There are real positive  constants $c$ and $C$ independent of the weight $w$, such that for every interval $J$

\begin{equation} \label{Buckley_RH1_1}
c \left( \av{w \log w}{J} - \av{w}{J} \log \av{w}{J}\right)
\leqslant
\frac{1}{\mdl{J}}\sum_{I \in \mathcal{D}(J)}{ \left(\frac{\Delta_I w}{\av{w}{I}}\right)^2  \av{w}{I} \mdl{I}}
\leq
C \left(\av{w \log w}{J} - \av{w}{J} \log \av{w}{J}\right).
\end{equation}

\bigskip

(3)  {\bf (case $A(x)= x^{-\frac{1}{p-1}}$)}  There is a real positive  constant $C$ independent of $w$, such that for every interval $J$

\begin{equation} \label{Buckley_Ap_1}
\frac{1}{\mdl{J}}\sum_{I \in \mathcal{D}(J)}{  \left(\frac{\Delta_I w}{\av{w}{I}}\right)^2  \av{w}{I}^{-\frac{1}{p-1}} \mdl{I}}
\leq
C  \left(\av{w^{-\frac{1}{p-1}}}{J} - \av{w}{J}^{-\frac{1}{p-1}}\right).
\end{equation}

Moreover, if $w$ is a doubling weight, then  there exists constant $c$ that may depend on the doubling constant of the weight $w$, such that  for every interval $J$

\begin{equation} \label{Buckley_Ap_1}
c \left(\av{w^{-\frac{1}{p-1}}}{J} - \av{w}{J}^{-\frac{1}{p-1}}\right)
\leqslant
\frac{1}{\mdl{J}}\sum_{I \in \mathcal{D}(J)}{  \left(\frac{\Delta_I w}{\av{w}{I}}\right)^2  \av{w}{I}^{-\frac{1}{p-1}} \mdl{I}}.
\end{equation}

(4) {\bf(case $A(x)= - \log x$)}  There is a real positive  constant $C$ independent of $w$, such that for every interval $J$

\begin{equation} \label{Buckley_Ainfty_1}
\frac{1}{\mdl{J}}\sum_{I \in \mathcal{D}(J)}{  \left(\frac{\Delta_I w}{\av{w}{I}}\right)^2  \mdl{I}}
\leqslant
C (\log \av{w}{J} - \av{\log w}{J}).
\end{equation}

Moreover, if $w$ is a doubling weight, then  there exists constant $c$ that may depend on the doubling constant of the weight $w$, such that  for every interval $J$

\begin{equation} \label{Buckley_Ainfty_1}
c (\log \av{w}{J} - \av{\log w}{J})
\leqslant
\frac{1}{\mdl{J}}\sum_{I \in \mathcal{D}(J)}{  \left(\frac{\Delta_I w}{\av{w}{I}}\right)^2  \mdl{I}}.
\end{equation}

\end{theorem}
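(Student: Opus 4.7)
The plan is to deduce each of the four statements by specializing Lemma \ref{main_Lemma} (for the upper inequalities) and Lemma \ref{inv_lemma} (for the lower inequalities) to four convex functions: $A(x)=x^p$ for case (1), $A(x)=x\log x$ for case (2), $A(x)=x^{-1/(p-1)}$ for case (3), and $A(x)=-\log x$ for case (4). In every case $A''$ is a single power of $x$ --- specifically $p(p-1)x^{p-2}$, $1/x$, $\frac{p}{(p-1)^2}x^{-1/(p-1)-2}$, and $1/x^2$ respectively --- and a direct computation shows that
\begin{equation*}
(\Delta_I w)^2\,A''(\av{w}{I})\,|I|\;=\;c_p\,\left(\frac{\Delta_I w}{\av{w}{I}}\right)^{2} \av{w}{I}^{\gamma}\,|I|,
\end{equation*}
with $\gamma=p,\ 1,\ -1/(p-1),\ 0$ respectively and with a positive constant $c_p$ depending only on $p$. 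Moreover, the quantity $\av{A(w)}{J}-A(\av{w}{J})$ appearing on the right of both lemmata equals, after the trivial identity $\av{-\log w}{J}-(-\log\av{w}{J})=\log\av{w}{J}-\av{\log w}{J}$ in case (4), precisely the quantity on the right of each inequality in the theorem. Hence the theorem reduces to verifying the hypotheses of the two lemmata for each of these four choices of $A$.

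For the upper bounds one applies Lemma \ref{main_Lemma}. In each of the four cases $A''$ is a monotone power function, so the remark immediately following Lemma \ref{main_Lemma} supplies condition (\ref{condition_A''}) with $q=1/2$, and the bumped sum is dominated by $C\,(\av{A(w)}{J}-A(\av{w}{J}))$ with $C$ depending only on $p$; this yields the upper bounds in all four parts of the theorem. For the lower bounds in cases (1) and (2), Lemma \ref{inv_lemma}(1) already records that condition (\ref{A_reverse}) holds for $A(x)=x^p$ with $p>1$ and for $A(x)=x\log x$, so the lower bounds follow immediately with a constant depending only on $p$.

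The main (and really the only) obstacle is the lower bounds in cases (3) and (4). Here the pointwise condition (\ref{A_reverse}) cannot hold globally: as $t\to x^-$ the value $A(x-t)$ diverges while $t^{2}A''(x)$ stays finite. This is precisely why the theorem restricts these two lower inequalities to doubling weights and we route through Lemma \ref{inv_lemma}(2). The doubling hypothesis forces $\av{w}{I^{\pm}}\geqslant \av{w}{I}/D$, hence $|\Delta_I w|/(2\av{w}{I})\leqslant (D-1)/D<1$, which translates, in the notation of Lemma \ref{inv_lemma}(2), to the restriction $t/x\leqslant (C-1)/C$ with $C=D$; this is exactly the range on which (\ref{A_reverse_db}) is required to hold. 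Since that lemma already asserts that (\ref{A_reverse_db}) holds for $A(x)=x^{-1/(p-1)}$ and $A(x)=-\log x$, the lower bounds in (\ref{Buckley_Ap_1}) and (\ref{Buckley_Ainfty_1}) follow with a constant depending on the doubling constant of $w$ through $C=D$. Everything else in the argument is a mechanical translation of the lemmata via the dictionary $A\leftrightarrow$ bumped average.
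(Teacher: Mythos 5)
Your proposal is correct and is precisely the argument the paper has in mind: the authors explicitly state that Theorem~\ref{our_Buckley_thm1} ``immediately follows from Lemma~\ref{main_Lemma}, the Remark after it and Lemma~\ref{inv_lemma}'' and leave the details to the reader, and you have supplied exactly those details. In particular, your computation that $A''$ is a single power of $x$ in each of the four cases (so that $(\Delta_I w)^2 A''(\av{w}{I})|I|$ rescales to the bumped summand), that monotonicity of each $A''$ gives the integral condition (\ref{condition_A''}) with $q=\tfrac12$ via the Remark, and that the doubling hypothesis in cases (3) and (4) translates the range $t/x\leqslant (D-1)/D$ required by Lemma~\ref{inv_lemma}(2), are all exactly the steps the paper intends.
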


Theorem \ref{our_Buckley_thm1} immediately follows from Lemma \ref{main_Lemma}, the Remark after it and Lemma \ref{inv_lemma}. We will leave its proof to the reader. Instead, let us show how Theorem \ref{our_Buckley_thm1} implies Buckley's theorem in the dyadic and continuous cases and in the case of weak Reverse H\"{o}lder classes.

In order to write our results in a more compact way we will start by introducing another way to define Reverse H\"{o}lder and Muckenhout constants. We will call them Buckley's constants and denote by $[w]_{RH_p^{d, B}}$  and $[w]_{A_p^{d, B}}$:

$$
[w]_{RH_p^{d, B}} := \inf \left\{ Q>1 \;\;s.t.\;\;\forall J \in \mathcal{D} \;\;\;\; \frac{1}{\mdl{J}}\sum_{I \in \mathcal{D}(J)}{ \left(\frac{\Delta_I w}{\av{w}{I}}\right)^2 \av{w}{I}^p \mdl{I}}  \leqslant Q \av{w}{J}^p\right\} ,\;\;\;\;p\geqslant 1,
$$

and similarly we can define continuous Buckley's Reverse H\"{o}lder constants

$$
[w]_{RH_p^{ B}} := \inf \left\{ Q>1 \;\;s.t.\;\;\forall J \subset \R \;\;\;\; \frac{1}{\mdl{J}}\sum_{I \in \mathcal{D}(J)}{ \left(\frac{\Delta_I w}{\av{w}{I}}\right)^2 \av{w}{I}^p \mdl{I}}  \leqslant Q \av{w}{J}^p\right\} ,\;\;\;\;p\geqslant 1.
$$
And similarly  for $1<p < \infty$ we define dyadic and continuous Buckley's Muckenhoupt constants:
$$
[w]_{A_p^{d, B}} := \inf \left\{ Q>0 \;\;s.t.\;\;\forall J \in \mathcal{D} \;\;\;\; \frac{1}{\mdl{J}}\sum_{I \in \mathcal{D}(J)}{  \left(\frac{\Delta_I w}{\av{w}{I}}\right)^2  \av{w}{I}^{-\frac{1}{p-1}}\mdl{I}}
\leqslant   Q \av{w}{J}^{-\frac{1}{p-1}}\right\} ,
$$
and
$$
[w]_{A_p^{ B}} := \inf \left\{ Q >0 \;\;s.t.\;\;\forall J \subset \R \;\;\;\; \frac{1}{\mdl{J}}\sum_{I \in \mathcal{D}(J)}{  \left(\frac{\Delta_I w}{\av{w}{I}}\right)^2 \av{w}{I}^{-\frac{1}{p-1}} \mdl{I}}
\leqslant   Q \av{w}{J}^{-\frac{1}{p-1}}\right\}.
$$
and in the $A_\infty$ case we have
$$
[w]_{A_\infty^{d, B}} := \inf \left\{ Q > 0 \;\;s.t.\;\;\forall J \in \mathcal{D} \;\;\;\; \frac{1}{\mdl{J}}\sum_{I \in \mathcal{D}(J)}{  \left(\frac{\Delta_I w}{\av{w}{I}}\right)^2  \mdl{I}} \;\;
\leqslant \;\;  Q \;\; \right\} ,
$$
and
$$
[w]_{A_\infty^{ B}} := \inf \left\{ Q > 0 \;\;s.t.\;\;\forall J \subset \R \;\;\;\; \frac{1}{\mdl{J}}\sum_{I \in \mathcal{D}(J)}{  \left(\frac{\Delta_I w}{\av{w}{I}}\right)^2  \mdl{I}}\;\;
\leqslant   \;\; Q \;\;\right\}.
$$

Note that in the Reverse H\"{o}lder case in Buckley's constants we do not need to define separately the $RH_1$ constants.
We are ready to state the result about comparability of Buckley's constants to the regular Reverse H\"{o}lder and Muckenhoupt constants.

\begin{theorem} [\bf \; Main\; result\; 2 : comparability\; of\; constants\; in\; summation\; conditions] \label{our_Buckley_thm}
\;\quad (1) Suppose $1 < p < \infty$ then there are positive constants $C$ and $c$ such that for every weight $w$
$$
c \;([w]_{RH_p^{d}}^{p} - 1) \; \leqslant\;  [w]_{RH_p^{d , B}} \; \leqslant \; C \; ([w]_{RH_p^{d}}^{{p}} - 1)
$$
and
$$
c \;([w]_{RH_p}^{p} - 1) \; \leqslant\;  [w]_{RH_p^{ B}} \; \leqslant \; C \; ([w]_{RH_p}^{{p}} - 1)
$$

(2) In the case  $p=1$ there are positive constants $C$ and $c$ such that for every weight $w$
$$
c [w]_{RH_1^{d}} \leqslant [w]_{RH_1^{d, B}} \leqslant C [w]_{RH_1^{d}}
$$
and
$$
c [w]_{RH_1} \leqslant [w]_{RH_1^{B}} \leqslant C [w]_{RH_1}.
$$

\bigskip
(3) For any $1<p<\infty$ there is a positive constants $C$ such that for every weight $w$
$$
[w]_{A_p^{d , B}} \leqslant \; C \; ([w]_{A_p^{d}}^{\frac{1}{p-1}} -1)
\;\;\;\;
and
\;\;\;\;
[w]_{A_p^{ B}} \leqslant C ([w]_{A_p}^{\frac{1}{p-1}} -1)
$$

(4) In the case $p=\infty$ there is a positive constants $C$ such that for every weight $w$
$$
[w]_{A_\infty^{d, B}} \leqslant \; C \; \log {[w]_{A_\infty^{d}}}
\;\;\;\;
and
\;\;\;\;
[w]_{A_\infty^{B}} \leqslant \; C \; \log{[w]_{A_\infty}}.
$$

\bigskip

Moreover, if $w$ is a doubling weight then

(5) For any $1<p<\infty$
$$
 c_d\;  ([w]_{A_p^{d}}^\frac{1}{p-1} - 1) \leqslant [w]_{A_p^{d , B}} \;\;\;\; and \;\;\;\; c\;  ([w]_{A_p}^\frac{1}{p-1} - 1) \leqslant [w]_{A_p^{ B}}
$$
holds with positive constants $c_d$ and $c$ that depend on the (dyadic) doubling constant of the weight $w$.

(6) In the case $p=\infty$
$$
 c_d\; \log {[w]_{A_\infty^{d}}} \leqslant  [w]_{A_\infty^{d , B}} \;\;\;\;and\;\;\;\;c\; \log {[w]_{A_\infty}} \leqslant [w]_{A_\infty^B}
$$
holds with positive constants $c_d$ and $c$ that depend on the (dyadic) doubling constant of the weight $w$.

\end{theorem}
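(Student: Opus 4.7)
The plan is to derive Theorem \ref{our_Buckley_thm} as bookkeeping from Theorem \ref{our_Buckley_thm1}, which has already been stated as a consequence of Lemmas \ref{main_Lemma} and \ref{inv_lemma}, combined with the reformulations of the various constants given in Remarks \ref{z: def Ap}, \ref{z: def Aoo}, \ref{z: def RHp}, and \ref{z: def RH1}. Each part of Theorem \ref{our_Buckley_thm1} compares the relevant Buckley sum on a fixed interval $J$ with a convexity defect $\av{A(w)}{J}-A(\av{w}{J})$, while each Remark expresses exactly the same defect, normalized by the right power of $\av{w}{J}$, in terms of $[w]_{A_p^d}$, $[w]_{A_\infty^d}$, $[w]_{RH_p^d}$ or $[w]_{RH_1^d}$. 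The proof in every case amounts to composing these two inequalities and taking a supremum over $J$.

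Concretely, for part (1) with $A(x)=x^p$, the upper half of \eqref{Buckley_RHp_1} combined with Remark \ref{z: def RHp} gives $\frac{1}{|J|}\sum_{I\in\mathcal{D}(J)}(\cdots)\le C([w]_{RH_p^d}^{p}-1)\av{w}{J}^{p}$, hence $[w]_{RH_p^{d,B}}\le C([w]_{RH_p^d}^{p}-1)$. For the reverse, the defining inequality of $[w]_{RH_p^{d,B}}$ and the lower half of \eqref{Buckley_RHp_1} yield $\av{w^p}{J}-\av{w}{J}^p\le c^{-1}[w]_{RH_p^{d,B}}\av{w}{J}^p$, and rearranging then taking a supremum gives $c([w]_{RH_p^d}^{p}-1)\le [w]_{RH_p^{d,B}}$. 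Part (2) is the identical argument with $A(x)=x\log x$, using \eqref{Buckley_RH1_1} and the reformulation $\av{w\log w}{J}-\av{w}{J}\log\av{w}{J}\le [w]_{RH_1^d}\av{w}{J}$ of Remark \ref{z: def RH1}. Parts (3) and (4) use only the upper halves of Theorem \ref{our_Buckley_thm1}(3) and (4) together with Remarks \ref{z: def Ap} and \ref{z: def Aoo}. For the doubling lower bounds in parts (5) and (6) I invoke the doubling halves in Theorem \ref{our_Buckley_thm1}(3) and (4); the need for doubling comes from the fact that $A(x)=x^{-1/(p-1)}$ and $A(x)=-\log x$ satisfy the reverse convexity estimate \eqref{A_reverse_db} only in the range $|t|<\frac{C-1}{C}x$, which is precisely the range of local oscillations $|\Delta_I w|$ permitted by a doubling hypothesis on $w$.

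All continuous statements follow verbatim: Lemmas \ref{main_Lemma} and \ref{inv_lemma}, and hence Theorem \ref{our_Buckley_thm1}, are stated for an arbitrary interval $J\subset\R$ and never use $J\in\mathcal{D}$. Replacing the supremum over $\mathcal{D}$ by a supremum over all intervals therefore just swaps each dyadic constant for its continuous counterpart, with the same absolute constants. I do not expect a real obstacle anywhere in this deduction; the only point requiring actual care is tracking how the doubling constant of $w$ propagates into the constants $c_d$ in (5) and (6), and this is automatic once the doubling half of Lemma \ref{inv_lemma} is applied with $C$ equal to $\mathcal{D}^d(w)$.
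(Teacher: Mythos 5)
Your proposal is correct and follows exactly the same route as the paper: prove part (1) by sandwiching the Buckley sum between the bumped average via Theorem \ref{our_Buckley_thm1}(1) and then invoking Remark \ref{z: def RHp}, and handle the remaining parts analogously from the corresponding halves of Theorem \ref{our_Buckley_thm1} together with Remarks \ref{z: def RH1}, \ref{z: def Ap}, \ref{z: def Aoo}, with the doubling hypothesis entering precisely through the lower (reverse) halves of Theorem \ref{our_Buckley_thm1}(3)--(4). Your observation that the continuous statements come for free because Theorem \ref{our_Buckley_thm1} is stated for arbitrary $J\subset\R$ matches the paper's parenthetical ``(dyadic or continuous)'' treatment.
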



We now show how Theorem \ref{our_Buckley_thm} follows from the Theorem \ref{our_Buckley_thm1}. Note also that in parts (5) and (6) of the Theorem \ref{our_Buckley_thm} constant $c_d$ and $c$ are different because one depends on the dyadic doubling constant of the weight $w$ and the other one depends on the continuous doubling constant of $w$.

\begin{proof}
We will prove case (1), all other cases are proved in a similar way with only minor changes and will be left to the reader.

We will show that the first part of Theorem \ref{our_Buckley_thm} follows from the first part of Theorem \ref{our_Buckley_thm1}, from which we know that there are constants $c$ and $C$ such that for any weight $w$ and interval $J \subset \R$
\begin{equation}
\label{f: RHp bump sum}
c (\av{w^p}{J} - \av{w}{J}^p)
\leqslant
\frac{1}{\mdl{J}}\sum_{I \in \mathcal{D}(J)}{ \left(\frac{\Delta_I w}{\av{w}{I}}\right)^2 \av{w}{I}^p \mdl{I}}
\leqslant
C (\av{w^p}{J} - \av{w}{J}^p).
\end{equation}

First, we assume that $w \in RH_p^{(d)}$ (dyadic or continuous), which means, by Remark \ref{z: def RHp},that for every (dyadic) interval $J\subset \R$
$$
0 \leqslant \av{w^p}{J} - \av{w}{J}^p \leqslant ([w]_{RH_p^{(d)}}^p-1) \av{w}{J}^p
$$
So, by inequality \ref{f: RHp bump sum} we have that
$$
\frac{1}{\mdl{J}}\sum_{I \in \mathcal{D}(J)}{ \left(\frac{\Delta_I w}{\av{w}{I}}\right)^2 \av{w}{I}^p \mdl{I}}
\leqslant \;
C \; ([w]_{RH_p^{(d)}}^p-1) \av{w}{J}^p,
$$
hence $[w]_{RH_p^{(d), B}} \, \leqslant \, C \; ([w]_{RH_p^{(d)}}^p-1)$.

Second, assume that $w \in RH_p^{(d), B}$, so for each (dyadic) interval $J \subset \R$
$$
\frac{1}{\mdl{J}}\sum_{I \in \mathcal{D}(J)}{ \left(\frac{\Delta_I w}{\av{w}{I}}\right)^2 \av{w}{I}^p \mdl{I}}
\leqslant \;
 [w]_{RH_p^{(d), B}} \av{w}{J}^p.
$$

Then from (\ref{f: RHp bump sum}) we deduce that
$$
\av{w^p}{J} - \av{w}{J}^p \;\leqslant \; \frac{1}{c} \; [w]_{RH_p^{(d), B}} \av{w}{J}^p,
$$
which means that $w \in RH_p^{(d)}$ and $c \; ([w]_{RH_p^{(d)}}^p -1) \leqslant [w]_{RH_p^{(d), B}}$.

Parts (2), (3) and (4) of the Theorem \ref{our_Buckley_thm} are proved in exactly the same way, using Remarks \ref{z: def RH1}, \ref{z: def Ap} and \ref{z: def Aoo} and the corresponding parts of Theorem \ref{our_Buckley_thm1}. The doubling assumptions in (3) and (4) also come from the Theorem \ref{our_Buckley_thm1}.
\end{proof}

The Theorem \ref{our_Buckley_thm} obviously implies Buckley's theorem (Theorem \ref{Buckley_theorem}), but our Theorem \ref{our_Buckley_thm1} is even stronger then this. Since Theorem \ref{our_Buckley_thm1} shows comparability of summations for a given weight with its bumped averages, we can also write summation conditions for the weak Reverse H\"{o}lder classes in the similar way.

\section{Summation Conditions for the Weak Reverse H\"older Classes}
\label{s: WRH}

In this section we discuss the weak Reverse H\"older class $RHW_p$, $p\geqslant 1$. We remind that the definition of the $RHW_p$-constant. For simplicity, we drop the superscript $d$, that referred to dyadic case.

All of the above is true in the continuous case as well, when all suprema are taken over any interval $J \subset \R$. We won't repeat all the definitions but will refer the reader to \cite{BR}.

We also give the definition of so called ``weak'' reverse H\"older class $RHW^d_p$.
\begin{defin}
In the dyadic case let $J^{\star}$ stand for the dyadic parent of $J \in D$. Then weight $w$ belongs to the dyadic weak Reverse H\"{o}lder class $RHW_p^d$, $p>1$, if and only if its weak Reverse H\"{o}lder constant is finite:
\begin{equation}
\label{weakrhp}
w \in RHW_p^d \;\;\;\;\Longleftrightarrow \;\;\;\; [w]_{RHW_p^d} := \sup_{J\in \mathcal{D}} \frac{\av{w^p}{J}^{\frac{1}{p}}}{\,\av{w}{J^\star}} <\infty.
\end{equation}
For $p=1$ we define the $RHW_1^d$ class as follows:
\begin{equation}\label{weakrh1}
w \in RHW_1^d \;\;\;\;\Longleftrightarrow \;\;\;\; [w]_{RHW_1}^d := \sup_{J\in \mathcal{D}} \av{\frac{w}{\;\;\av{w}{J^\star}}\log\frac{w}{\;\av{w}{J}}}{J}<\infty.
\end{equation}

In the continuous case, for any interval $J\subset \R$ let $2 J$ stand for the interval concentric with $J$ of the length twice the length of interval $J$. Then weak Reverse H\"{o}lder classes $RHW_p, p>1$ and $RHW_1$ are defined by
\begin{equation}
\label{d: RHWp}
w \in RHW_p \;\;\;\;\Longleftrightarrow \;\;\;\; [w]_{RHW_p} := \sup_{J\subset \R} \frac{\av{w^p}{J}^{\frac{1}{p}}}{\,\av{w}{2J}} <\infty.
\end{equation}
and
\begin{equation}\label{d: RHW1}
w \in RHW_1\;\;\;\;\Longleftrightarrow \;\;\;\; [w]_{RHW_1}^d := \sup_{J\subset \R} \av{\frac{w}{\;\;\av{w}{2J}}\log\frac{w}{\;\av{w}{J}}}{J}<\infty.
\end{equation}
\end{defin}
We again note that it is important that in the definitions of $RHW_1^d$ and $RHW_1$ inside the $\log$ we divide by the average of $w$ over the interval $J$, not by the average over $J^\star$ or $2J$.

\begin{zamech}
We now explain why the definition of the weak Reverse H\"older constant, \eqref{weakrh1}, makes sense. In fact, in spirit of the formula above, we can define it as
$$
[w]_{RHW_1^{d\prime \prime}} \; := \; \sup_{I\in \mathcal{D}} \frac{\norm{w}{L\log L,I}}{\norm{w}{L,I^\star}}.
$$
\end{zamech}

\begin{zamech}
Also note that as in the strong case we can rewrite (\ref{weakrhp}), (\ref{weakrh1}), (\ref{d: RHWp}) and (\ref{d: RHW1}) as:
\begin{eqnarray}
\label{d: WRHpd e}
w \in RHW_p^d \;\;\;\;&\Longleftrightarrow& \;\;\;\; 0 \leqslant \av{w^p}{J} \leqslant [w]_{RHW_p^d}^p \av{w}{J^{\star}}^p \;\;\;\;\; \forall J \in \mathcal{D},\\
\label{d: RHW1d e}
w \in RHW_1^d \;\;\;\;&\Longleftrightarrow& \;\;\;\; 0\leqslant \av{w \log w}{J} - \av{w}{J} \log \av{w}{J} \leqslant [w]_{RHW_1^d} \av{w}{J^\star} \;\;\;\;\; \forall J \in \mathcal{D},\\
\label{d: WRHp e}
w \in RHW_p \;\;\;\;&\Longleftrightarrow& \;\;\;\; 0 \leqslant \av{w^p}{J} \leqslant [w]_{RHW_p}^p \av{w}{2J}^p \;\;\;\;\; \forall J \subset \R,\\
\label{d: RHW1 e}
w \in RHW_1^d \;\;\;\;&\Longleftrightarrow& \;\;\;\; 0\leqslant \av{w \log w}{J} - \av{w}{J} \log \av{w}{J} \leqslant [w]_{RHW_1^d} \av{w}{2J} \;\;\;\;\; \forall J \subset \R.
\end{eqnarray}
\end{zamech}

\begin{defin} We are ready to define dyadic and continuous weak Buckley Reverse H\"{o}lder constants now in the most natural way. For any $p\geqslant 1$ let
$$
[w]_{RHW_p^{d, B}} := \inf \left\{ Q>0 \;\;s.t.\;\;\forall J \in \mathcal{D} \;\;\;\; \frac{1}{\mdl{J}}\sum_{I \in \mathcal{D}(J)}{ \left(\frac{\Delta_I w}{\av{w}{I}}\right)^2 \av{w}{I}^p \mdl{I}}  \leqslant Q \av{w}{J^\star}^p\right\}
$$
and
$$
[w]_{RHW_p^{B}} := \inf \left\{ Q>0 \;\;s.t.\;\;\forall J \subset \R \;\;\;\; \frac{1}{\mdl{J}}\sum_{I \in \mathcal{D}(J)}{ \left(\frac{\Delta_I w}{\av{w}{I}}\right)^2 \av{w}{I}^p \mdl{I}}  \leqslant Q \av{w}{J^\star}^p\right\}.
$$
\end{defin}




Now we are ready to state the following theorem, which is also a consequence of the Theorem \ref{Buckley_Ainfty_1}.
\begin{theorem}\label{weakineq}
A weight $w$ belongs to $RHW_p^d$ if and only if the weak Buckley constant, $[w]_{RHW_p^{d, B}}$, is finite. Moreover, there exist positive constants $C_1$ that does not depend on $w$ and $p$ and $C_2$ that may depend on $p$, such that for any $p>1$
$$
[w]_{RHW_p^{d,B}} \leqslant C_1 [w]_{RHW_p^{d}}^p \; \; \; and \;\;\;\; [w]_{RHW_p^{d}} \leqslant C_2 ([w]_{RHW_p^{d, B}} + 1)^\frac{1}{p}
$$
and same in the continuous case
$$
[w]_{RHW_p^{B}} \leqslant C_1 [w]_{RHW_p}^p \; \; \; and \;\;\;\; [w]_{RHW_p} \leqslant C_2 ([w]_{RHW_p^{B}} + 1)^\frac{1}{p}.
$$

In the case  $p=1$ there are positive constants $C$ and $c$ such that
$$
c [w]_{RH_1^{d, B}} \leqslant [w]_{RH_1^{d}} \leqslant C [w]_{RH_1^{d, B}}
$$
and
$$
c [w]_{RH_1} \leqslant [w]_{RH_1^B} \leqslant C [w]_{RH_1}.
$$
 \end{theorem}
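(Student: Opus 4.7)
The plan is to deduce Theorem \ref{weakineq} directly from Theorem \ref{our_Buckley_thm1}, using the rewritten definitions \eqref{d: WRHpd e}--\eqref{d: RHW1 e} of the weak classes. The key observation is that in the weak setting the natural quantity controlling $\av{w^p}{J}$ or $\av{w\log w}{J}$ is $\av{w}{J^{\star}}$ (or $\av{w}{2J}$), not $\av{w}{J}$, so we only need to pass from the ``strong'' bumped-average bound of Theorem \ref{our_Buckley_thm1} to one phrased in terms of $\av{w}{J^{\star}}$ using the trivial dyadic inequality $\av{w}{J} \leqslant 2\av{w}{J^{\star}}$ (which follows from $w\geqslant 0$ and $|J^{\star}|=2|J|$), and analogously $\av{w}{J}\leqslant 2\av{w}{2J}$ in the continuous case.

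First I would prove the case $p>1$ for the dyadic class. The forward inequality is immediate: by Theorem \ref{our_Buckley_thm1}(1),
\[
\tfrac{1}{|J|}\sum_{I\in\mathcal{D}(J)}\Bigl(\tfrac{\Delta_I w}{\av{w}{I}}\Bigr)^{\!2}\av{w}{I}^{p}|I|
\;\leqslant\; C\bigl(\av{w^p}{J}-\av{w}{J}^p\bigr)
\;\leqslant\; C\av{w^p}{J}
\;\leqslant\; C[w]_{RHW_p^d}^{p}\av{w}{J^{\star}}^{p},
\]
which gives $[w]_{RHW_p^{d,B}}\leqslant C_1[w]_{RHW_p^d}^{\,p}$ with $C_1$ absolute. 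For the converse, the lower bound in Theorem \ref{our_Buckley_thm1}(1) provides
\[
\av{w^p}{J}-\av{w}{J}^p\;\leqslant\; \tfrac{1}{c}\cdot[w]_{RHW_p^{d,B}}\av{w}{J^{\star}}^{p},
\]
and then the elementary estimate $\av{w}{J}^{p}\leqslant 2^{p}\av{w}{J^{\star}}^{p}$ yields $\av{w^p}{J}\leqslant (c^{-1}[w]_{RHW_p^{d,B}}+2^p)\av{w}{J^{\star}}^{p}$, so that $[w]_{RHW_p^d}\leqslant C_2\bigl([w]_{RHW_p^{d,B}}+1\bigr)^{1/p}$ with $C_2=C_2(p)$.

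Next I would handle the case $p=1$ by the same scheme, replacing Theorem \ref{our_Buckley_thm1}(1) with Theorem \ref{our_Buckley_thm1}(2) and the defining inequality \eqref{weakrhp} with \eqref{d: RHW1d e}. Here the situation is actually cleaner, because there is no subtracted $\av{w}{J}^{p}$ term to absorb: the bumped average $\av{w\log w}{J}-\av{w}{J}\log\av{w}{J}$ is already nonnegative by Jensen's inequality and is precisely the one that appears in the definition of $[w]_{RHW_1^d}$. So both directions follow in a single line from the two-sided comparison of Theorem \ref{our_Buckley_thm1}(2), with explicit absolute constants $c,C$. The continuous versions proceed in exactly the same way using $\av{w}{J}\leqslant 2\av{w}{2J}$ and the continuous definitions \eqref{d: WRHp e},\eqref{d: RHW1 e}.

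The only mild subtlety is the dependence of $C_2$ on $p$ in the converse direction for $p>1$: after absorbing the additive $2^p\av{w}{J^{\star}}^p$, the $p$-th root introduces a factor that degenerates as $p\to 1^+$. I do not see how to remove this $p$-dependence without going through the $p=1$ analysis separately, which is exactly the reason the theorem is stated with a sharper (constants $c,C$ both absolute) conclusion in the $p=1$ case. I expect this bookkeeping, rather than any genuinely new Bellman-function argument, to be the only real issue; the proof otherwise is a routine combination of Theorem \ref{our_Buckley_thm1} with the doubling-by-one-scale estimates above, which is why the authors opt to skip the details.
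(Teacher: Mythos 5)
Your argument is exactly the paper's own: both use Theorem~\ref{our_Buckley_thm1} together with the rewritten weak definitions \eqref{d: WRHpd e}--\eqref{d: RHW1 e}, drop the subtracted $\av{w}{J}^p$ term in the forward direction, and in the converse absorb it via $\av{w}{J}\leqslant 2\av{w}{F(J)}$ to get $\av{w^p}{J}\leqslant(c^{-1}[w]_{RHW_p^{(d),B}}+2^p)\av{w}{F(J)}^p$. Your observation about the unavoidable $p$-dependence of $C_2$ from the $2^p$ term and the $p$-th root correctly explains why the $p=1$ case is stated with two-sided absolute constants and handled separately via part (2) of Theorem~\ref{our_Buckley_thm1}, again exactly as the paper does.
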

\begin{proof}
Proofs for continuous and dyadic cases are identical, so we will do both continuous and dyadic cases simultaneously.

For $p>1$, by Theorem \ref{our_Buckley_thm1}, part (1) we know that for any weight $w$ and any interval $J$  the following holds:
\begin{equation} \label{Buckley_RHp_2}
c (\av{w^p}{J} - \av{w}{J}^p)
\leqslant
\frac{1}{\mdl{J}}\sum_{I \in \mathcal{D}(J)}{ \left(\frac{\Delta_I w}{\av{w}{I}}\right)^2 \av{w}{I}^p \mdl{I}}
\leqslant
C (\av{w^p}{J} - \av{w}{J}^p).
\end{equation}
Note that $\av{w}{J}$ is nonnegative, so if $w$ belongs to the (dyadic or continuous) class $RHW_p^{d}$ by (\ref{d: WRHpd e}) or (\ref{d: WRHp e}) we have that for every (dyadic) interval $J \subset \R$
\begin{equation} \nn
\frac{1}{\mdl{J}}\sum_{I \in \mathcal{D}(J)}{ \left(\frac{\Delta_I w}{\av{w}{I}}\right)^2 \av{w}{I}^p \mdl{I}}
\leqslant
C \av{w^p}{J} \leqslant C [w]_{RHW_p^{(d)}}^p \av{w}{F(J)}^p,
\end{equation}
where $F(J)$ is either the dyadic parent of $J$ or $2J$. So $[w]_{RHW_p^{(d), B}} \leqslant C [w]_{RHW_p^{(d)}}^p$.  To prove the reverse inequality we assume that $w$ is in (dyadic or continuous) $RHW_p^{(d),B}$, then
\begin{equation} \nn
c (\av{w^p}{J} - \av{w}{J}^p)
\leqslant
\frac{1}{\mdl{J}}\sum_{I \in \mathcal{D}(J)}{ \left(\frac{\Delta_I w}{\av{w}{I}}\right)^2 \av{w}{I}^p \mdl{I}}
\leqslant
[w]_{RHW_p^{(d),B}} \av{w}{F(J)}^p,
\end{equation}
from which we conclude that $\av{w^p}{J} \leqslant \frac{1}{c} [w]_{RHW_p^{(d),B}} \av{w}{F(J)}^p +  \av{w}{J}^p$. Note that $\av{w}{J} \leqslant 2 \av{w}{F(J)}$, so
$$
\av{w^p}{J} \leqslant \left(\frac{1}{c} [w]_{RHW_p^{(d),B}} +2^p\right) \av{w}{F(J)}^p,
$$
Which implies that $[w]_{RHW_p^{(d)}} \leqslant \left(\frac{1}{c} [w]_{RHW_p^{(d),B}} +2^p\right)^\frac{1}{p}$ and completes the proof of the theorem for $p>1$.


For $p=1$ we use the comparability (part (2) of Theorem \ref{our_Buckley_thm1}):
\begin{equation} \nn
c \left( \av{w \log w}{J} - \av{w}{J} \log \av{w}{J}\right)
\leqslant
\frac{1}{\mdl{J}}\sum_{I \in \mathcal{D}(J)}{ \left(\frac{\Delta_I w}{\av{w}{I}}\right)^2  \av{w}{I} \mdl{I}}
\leq
C \left(\av{w \log w}{J} - \av{w}{J} \log \av{w}{J}\right)
\end{equation}
together with the definitions of dyadic and continuous $RHW_1^{(d)}$ (\ref{d: RHW1d e}) and (\ref{d: RHW1 e}). This proof is similar to the continuous case is left to the reader.
\end{proof}
\begin{zamech}
We notice that we have proved the theorem for any pairs $(J, F(J))$, which satisfy the following two conditions:

(1) $J\subset F(J)$, and

(2) $|J|\geqslant c |F(J)|$.
\end{zamech}
\section{Bellman Function Proofs}
\label{s: proofs}

\subsection{Some history}
We now proceed to the Bellman-type proofs. Before we do it, we would like to make some historical overview.

Bellman function, related to investigation of weights by their own (i.e., not related to linear operators in weighted spaces), has been exploit in different papers.
Such properties as Reverse H\"{o}lder, $L^p$ estimates and distribution functions of $A_p$ weights were investigated in works ~\cite{Va}, ~\cite{Re}, ~\cite{DiWa}. In all these works the Bellman function was found for continuous $A_p$. We strongly refer the curious reader to these papers, since the search for Bellman function and extremal examples are given there in details.

Aside from these three papers, the theory of $BMO$ weights was developed in ~\cite{SlVa}. In this paper, together with the continuous $BMO$, authors considered the dyadic one. The dyadic problem appeared to be much more delicate in some sense, and required a lot of additional calculations. In what follows, we use several parts of the dyadic proof from ~\cite{SlVa}. It appears that in our case the same steps give the proof. However, some parts of our proof are more delicate.

We now point out two difficulties that we have. First of all, functions in ~\cite{SlVa} were absolutely explicit. In our case, as the reader will see, many ingredients are given implicitly, which makes things a little more complicated.

The main difficulty, though, is not the fact that we have implicit functions. In ~\cite{SlVa} authors noticed that the domain of their Bellman function has the following property: it can be enlarged, with a good estimate of this ``enlargement'', such that if endpoints and center of some interval are in the smaller domain, then the whole interval is in the enlarged domain.
It immediately implied that, if we do not care about sharp constants, we can get some nice estimates in dyadic case immediately from the continuous case.

In the Remark \ref{cannotenlarge} we prove that the domain of our Bellman function does not have this property. Therefore, without additional investigation, we can not make any dyadic statements. This means that we are ``forced'' to care about best constants and run a variant of the proof from ~\cite{SlVa}.

We also refer the reader to another dyadic problem, ~\cite{VaVo}. Authors obtained the exact Bellman function too. However, the domain of their function was convex, and, therefore, obviously had the above property.

In ~\cite{BR} authors introduced a certain function of two variables, that allows to prove the continuous case of the inequality.

We sketch the definition and application of this function and discuss the main difficulty of the dyadic problem.

We will start with the Bellman function proofs of the summation conditions (inequalities \ref{Buckley_Ainfty} - \ref{Buckley_RHp}) since they are simpler then the prove of theorem \ref{theorem_RH1<Ainfty}, which is much harder.

\subsection{Technical details}
In this section we want to prove the following proposition.
\begin{prop}
\begin{enumerate}
\item For any monotone non negative function $f(x)$ the following inequality holds for some constant $C$:
$$
 \int_{-1}^{1} (1-|t|)  f(x + \varepsilon t) dt \geqslant C f(x).
$$
\item If $A(x)$ satisfies
$$
 \int_{-1}^{1} (1-|t|) A^{\prime\prime} (x + \varepsilon t) dt \geqslant q A^{\prime\prime}(x)
 $$
then for some $\alpha>0$
$$
A(x)-\frac{A(x-t)+A(x+t)}{2}+\alpha t^2 A''(x)\leqslant 0.
$$
\item If $A(x)=x^p$, $p>1$, then for some $\beta>0$
$$
A(x)-\frac{A(x-t)+A(x+t)}{2}+\beta t^2 A''(x)\geqslant 0
$$
\item Let $C>1$ and $A(x)=x^{-\frac{1}{p-1}}$. Then there exists an $\alpha$, depending only on $p$ and $C$, such that the following inequality holds for any $t$, $0<t<\frac{C-1}{C}x$:
$$
A(x)-\frac{A(x+t)+A(x-t)}{2} +\beta t^2 A''(x) \geqslant 0.
$$
Moreover, one can take
$$
\beta = \frac{p-1}{p'} \left( (\frac{C}{C-1})^2 \cdot \frac{(\frac{2C-1}{C})^{-\frac{1}{p-1}}+(\frac{1}{C})^{-\frac{1}{p-1}}}{2}-(\frac{C}{C-1})^2\right)
$$
\end{enumerate}
\end{prop}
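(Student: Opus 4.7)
Everything hinges on one identity. With $f(\tau):=\tfrac12[A(x+\tau)+A(x-\tau)]$ one has $f(0)=A(x)$, $f'(0)=0$, and $f''(\tau)=\tfrac12[A''(x+\tau)+A''(x-\tau)]$. Taylor's formula with integral remainder, followed by the substitution $s=tu$, yields
\[
\frac{A(x+t)+A(x-t)}{2}-A(x) \;=\; \frac{t^2}{2}\int_{-1}^{1}(1-|u|)\,A''(x+tu)\,du.
\]
Granted this identity, Part~(2) is immediate: the hypothesis $\int_{-1}^{1}(1-|u|)A''(x+\varepsilon u)\,du \ge q\,A''(x)$, applied with $\varepsilon=t$, delivers the claim with $\alpha=q/2$.

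For Part~(1) I would split the integral at $u=0$. For nondecreasing $f$ and $u\in[0,1]$ we have $f(x+\varepsilon u)\ge f(x)$ when $\varepsilon>0$; for nonincreasing $f$ the same bound holds on $u\in[-1,0]$. Since $\int_{0}^{1}(1-u)\,du=\tfrac12$, the constant $C=\tfrac12$ works.

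For Part~(3), homogeneity of $x^{p}$ lets me set $x=1$, reducing the required inequality (via the identity) to the uniform bound $\int_{-1}^{1}(1-|u|)(1+su)^{p-2}\,du\le 2\beta$ for $|s|<1$. If $p\ge 2$ the integrand is bounded by $2^{p-2}$ because $1+su\le 2$. If $1<p<2$ the integrand can blow up as $su\to-1$, but $(1-|u|)$ vanishes linearly there, so the product behaves like $(1-|u|)^{p-1}$, which is integrable; continuity in $s$ on the closed interval $[-1,1]$ then gives a uniform bound and a finite $\beta$.

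Part~(4) is where the explicit $\beta$ must be produced. Scaling to $x=1$ and writing $q=1/(p-1)$, I have to show
\[
h(s):=\frac{(1+s)^{-q}+(1-s)^{-q}}{2}-1\;\le\;\beta\,q(q+1)\,s^{2}\qquad\text{for }0<s<\tfrac{C-1}{C}.
\]
The decisive observation is that the Taylor expansion $h(s)=\sum_{k\ge1}\binom{-q}{2k}\,s^{2k}$ has only nonnegative coefficients: the binomial coefficient $\binom{-q}{2k}=\prod_{j=0}^{2k-1}(-q-j)/(2k)!$ is a product of $2k$ negative factors and is therefore positive. Consequently $s\mapsto h(s)/s^{2}$ is a power series with nonnegative coefficients, hence nondecreasing on $(0,1)$, so its supremum over $\bigl(0,\tfrac{C-1}{C}\bigr)$ is attained at $s_{0}=\tfrac{C-1}{C}$. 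Taking $\beta=h(s_{0})/\bigl(s_{0}^{2}q(q+1)\bigr)$ and simplifying with $1/[q(q+1)]=(p-1)/p'$ and $s_{0}^{-2}=(C/(C-1))^{2}$ produces precisely the formula in the proposition. The only substantive step I foresee is this positivity/monotonicity observation; once it is in hand, the explicit $\beta$ just drops out from evaluating at the endpoint.
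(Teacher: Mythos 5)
Your proof is correct, and it proceeds by the same overall scheme as the paper's: homogeneity to reduce to a one-variable inequality in parts (3)--(4), and the Taylor-with-integral-remainder identity
\[
\frac{A(x+t)+A(x-t)}{2}-A(x)=\frac{t^{2}}{2}\int_{-1}^{1}(1-|u|)\,A''(x+tu)\,du
\]
as the bridge in part (2). Parts (1) and (2) are identical to the paper's argument (your $C=\tfrac12$ and $\alpha=q/2$ match). In part (3) you phrase the finiteness of $\beta$ via the integral form --- bounding $\int_{-1}^1(1-|u|)(1+su)^{p-2}\,du$ uniformly, noting that for $1<p<2$ the weight $(1-|u|)$ kills the endpoint singularity --- whereas the paper argues directly that $u\mapsto\bigl[u^{p}-\tfrac12((u+1)^p+(u-1)^p)\bigr]/u^{p-2}$ is continuous on $(1,\infty)$ with a finite limit at infinity; these are two elementary dressings of the same fact. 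The genuine divergence is in part (4). The paper's key step is its ``Sublemma'' asserting that $f(v)=v^{-2}\bigl[1-\tfrac12((1-v)^{-q}+(1+v)^{-q})\bigr]$ is decreasing, proved by a laborious three-layer differentiation through $\psi$, $\psi'$, $\psi''$. You obtain the same monotonicity instantly from the observation that the even-index binomial coefficients $\binom{-q}{2k}$ are all positive, so $h(s)/s^{2}$ with $h(s)=\tfrac12\bigl[(1+s)^{-q}+(1-s)^{-q}\bigr]-1=-v^{2}f(v)\big|_{v=s}$ is a power series in $s^{2}$ with nonnegative coefficients and hence nondecreasing. Evaluating at the endpoint $s_{0}=(C-1)/C$ and using $q(q+1)=p'/(p-1)$ then recovers the paper's explicit $\beta$. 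Your route buys a cleaner and shorter argument for exactly the part of the proposition the authors describe as the most computational, at no loss of generality.
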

\begin{proof}[The first part]
Suppose $f$ is increasing. Then
$$
 \int_{-1}^{1} (1-|t|)  f(x + \varepsilon t) dt \geqslant \int_{0}^1 (1-|t|)  f(x + \varepsilon t) dt \geqslant f(x)\int_{0}^1 (1-|t|)dt.
$$
If $f$ is decreasing then we consider the integral over $(-1,0)$, which finishes the proof of the first part.
\end{proof}
\begin{proof}[The second part]
Let $x(s)=\frac{(x-t)(1-s)+(x+t)(1+s)}{2}$, and $a(s)=A(x(s))$. Then we would like to estimate the quantity
\begin{multline}
a(0)-\frac{a(1)+a(-1)}{2} = -\frac{1}{2} \int_{-1}^1 (1-|s|)a''(s)ds = \\=-\frac12 \cdot (2t)^2 \int_{-1}^1 (1-|s|)A''(x(s))ds = -c\cdot t^2 \int_{-1}^1 (1-|s|)A''(x + st)ds.
\end{multline}

Thus,
$$
A(x)-\frac{A(x-t)+A(x+t)}{2} \leqslant -c\cdot t^2 A''(x),
$$
which is exactly what we want.

\end{proof}
\begin{proof}[The third part]
Due to the homogeneity, this inequality is equivalent to the following:
$$
f(u):=u^p - \frac{(u+1)^{p}+(u-1)^p}{2} + \beta u^{p-2} \geqslant 0, \; \; \; \; u>1
$$
We notice that $f$ is continuous, and $\lim\limits_{u\to\infty} \frac{f(u)}{u^{p-2}}$ is finite. Therefore, such $\beta$ exists.
\end{proof}
\begin{proof}[The fourth part]
Again using homogeneity, we reduce our problem to the following: the function
$$
f_0(u)=u^{-\frac{1}{p-1}} - \frac{(u-1)^{-\frac{1}{p-1}} - (u+1)^{-\frac{1}{p-1}}}{2}+\gamma u^{-2-\frac{1}{p-1}}
$$
should be non-negative, when $u\geqslant \frac{C}{C-1}$. Here $\gamma=\alpha \frac{p'}{p-1}$.

We multiply by $u^{2+\frac{1}{p-1}}$ and, denoting $v=u^{-1}$, we need
$$
f_1(v)=\frac{1}{v^2}-\frac{(1-v)^{-\frac{1}{p-1}}+(1+v)^{-\frac{1}{p-1}}}{2v^2} + \gamma \geqslant 0,
$$
or the function
$$
f(v)=\frac{1}{v^2}-\frac{(1-v)^{-\frac{1}{p-1}}+(1+v)^{-\frac{1}{p-1}}}{2v^2}
$$
should be bounded from below, whenever $0<v<\frac{C-1}{C}$.

We prove the following:
\begin{lemma}[Sublemma]
 $f(v)$ is decreasing.
\end{lemma}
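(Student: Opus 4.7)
Setting $\alpha := 1/(p-1) > 0$, the sublemma asks that
\[ f(v) = \frac{2 - (1-v)^{-\alpha} - (1+v)^{-\alpha}}{2 v^{2}} \]
be strictly decreasing on $(0,1)$, which clearly contains the interval $(0, (C-1)/C)$ of interest. The first thing I would note is that $f \leq 0$: since $x \mapsto x^{-\alpha}$ is strictly convex on $(0,\infty)$, applying Jensen's inequality at the points $1-v$ and $1+v$ gives $(1-v)^{-\alpha} + (1+v)^{-\alpha} \geq 2$. The claim is therefore equivalent to $\phi(v) := -f(v)$ being strictly increasing on $(0,1)$.

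The plan is then to expand in binomial series. For $|v| < 1$ and $\alpha > 0$, the identity $(1 \mp v)^{-\alpha} = \sum_{k \geq 0} \binom{\alpha + k - 1}{k}(\pm v)^{k}$ holds with every coefficient $\binom{\alpha+k-1}{k} = \alpha(\alpha+1)\cdots(\alpha+k-1)/k! > 0$. Adding the two versions cancels all odd powers of $v$, so I obtain
\[ (1-v)^{-\alpha} + (1+v)^{-\alpha} - 2 = 2 \sum_{j=1}^{\infty} \binom{\alpha + 2j - 1}{2j} v^{2j}, \]
and dividing by $2 v^{2}$ will yield
\[ \phi(v) = \sum_{j=1}^{\infty} \binom{\alpha + 2j - 1}{2j} v^{2j - 2}. \]
Since every coefficient is strictly positive (the leading one being $\alpha(\alpha+1)/2$), $\phi$ is a power series in $v$ whose derivative is a series with positive coefficients, hence $\phi$ is strictly increasing on $[0,1)$ and the sublemma follows.

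If one prefers to avoid series, I could package the same information using Taylor's formula with integral remainder. Letting $\psi(v) = (1-v)^{-\alpha} + (1+v)^{-\alpha}$, so that $\psi(0) = 2$ and $\psi'(0) = 0$, and substituting $s = v t$ inside $\phi(v) = (2v^{2})^{-1}\int_{0}^{v}(v-s)\psi''(s)\, ds$ gives
\[ \phi(v) = \frac{\alpha(\alpha+1)}{2} \int_{0}^{1} (1-t) \bigl[(1 - v t)^{-\alpha - 2} + (1 + v t)^{-\alpha - 2}\bigr] dt, \]
and for each fixed $t \in (0,1)$ the bracketed expression is increasing in $v \in (0, 1/t)$ because $(1 - v t)^{-\alpha - 3} \geq (1 + v t)^{-\alpha - 3}$; differentiation under the integral would then conclude the argument. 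I do not foresee a genuine obstacle here; the main temptation I would resist is attacking $f'(v)$ directly by the quotient rule, which conceals the positivity behind a tangle of terms. The only delicate point is simply to observe that the odd-order Taylor coefficients of $\psi$ vanish by symmetry and that the even ones are strictly positive, both of which are immediate from $\alpha > 0$.
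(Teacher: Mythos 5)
Your argument is correct, and it takes a genuinely different route from the paper. The paper proves the sublemma by direct differentiation: it multiplies $f'(v)$ by $v^3$ to clear denominators, obtains an explicit elementary function $\psi(v) := v^3 f'(v)$, and then establishes $\psi \leq 0$ on $(0,1)$ by verifying $\psi(0) = 0$, $\psi'(0) = 0$, and $\psi''(v) \leq 0$. It is a concavity argument carried out entirely at the level of the first derivative of $f$, and it requires two rounds of rather heavy simplification. You instead expand $(1 \mp v)^{-\alpha}$ in a binomial series, observe that the odd powers cancel in the sum and that every surviving coefficient $\binom{\alpha+2j-1}{2j}$ is strictly positive for $\alpha = 1/(p-1) > 0$, and conclude that $-f(v) = \sum_{j \geq 1}\binom{\alpha+2j-1}{2j} v^{2j-2}$ is a power series with nonnegative coefficients, hence increasing on $(0,1)$; your integral-remainder reformulation is the same statement packaged so that no series manipulation is needed. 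Your route is shorter, avoids the bookkeeping with $p'$, and exposes more structure than the paper uses (it shows $-f$ is absolutely monotone on $[0,1)$, not merely increasing); the paper's route is more self-contained in that it never leaves the realm of elementary calculus with finitely many terms, which may be why the authors chose it, but there is no gap in what you propose, and the interval $(0,(C-1)/C) \subset (0,1)$ is comfortably covered.
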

If we prove the sublemma, we get
$$
f(v)\geqslant f\left(\frac{C-1}{C}\right),
$$
and, therefore,
$$
\gamma = -f\left(\frac{C-1}{C}\right).
$$

\end{proof}
\begin{proof}[Proof of sublemma]
 We prove this proposition by straightforward differentiation. First,
$$
v^2 f(v)=1-\frac{(1-v)^{-\frac{1}{p-1}}+(1+v)^{-\frac{1}{p-1}}}{2},
$$
and so
$$
2vf(v) + v^2 f'(v) = \frac{1}{p-1}\frac{(1+v)^{-1-\frac{1}{p-1}}-(1-v)^{-1-\frac{1}{p-1}}}{2},
$$
thus
$$
v^2 f'(v) = \frac{1}{p-1}\frac{(1+v)^{-1-\frac{1}{p-1}}-(1-v)^{-1-\frac{1}{p-1}}}{2} - \frac{2}{v}+\frac{(1-v)^{-\frac{1}{p-1}}+(1+v)^{-\frac{1}{p-1}}}{v}
$$

We would like to prove that $f'(v)<0$ or, equivalently, the right-hand side is negative. We multiply by $v$ to get (after simple algebra)
$$
v^3 f'(v) = (1+v)^{1-p'} \frac{p'+1}{2} + (1-v)^{1-p'}\frac{p'+1}{2} - ((1+v)^{-p'} + (1-v)^{-p'})\frac{1}{2(p-1}-2 =:\psi(v).
$$
Clearly, $\psi(0)=0$. Next,
$$
\psi'(v)=\frac{(1-p')(p'+1)}{2}(1+v)^{-p'} - \frac{(1-p')(p'+1)}{2} (1-v)^{-p'} + \frac{p'}{2(p-1)}((1+v)^{-1-p'}-(1-v)^{-1-p'}),
$$
$$
\psi''(v)=\frac{p'(p'+1)}{2(p-1)}\cdot v\cdot ((1+v)^{-2-p'}-(1-v)^{-2-p'}).
$$
Thus, $\psi''(v)\leqslant 0$, thus $\psi'(v)\leqslant \psi'(0)=0$, and so $\psi(v)\leqslant \psi(0)=0$, which is what we want.
\end{proof}

\subsection{Bellman Function Proof of Lemma \ref{main_Lemma}}
\label{s: proof main lemma}
We remind that $A(x)$ be a convex twice differentiable function on $(0,\infty)$ such that for every $x \in (0,\infty)$ second derivative of $A$ satisfies the following inequality for every $\varepsilon>0$
 \begin{equation}
 \label{condition_A''}
 \int_{-1}^{1} (1-|t|) A^{\prime\prime} (x + \varepsilon t) dt \geqslant Q A^{\prime\prime}(x)
 \end{equation}
 holds with some positive constant $Q$ uniformly on $x$ and $\varepsilon$.

 Then for every weight $w$ and an interval $J$
 \begin{equation}\label{Asum}
 \frac{1}{|J|} \sum_{I\in \mathcal{D}(J)} \left(\av{w}{I^+} - \av{w}{I^-}\right)^2 A^{\prime\prime}(\av{w}{I}) |I| \leqslant 8 \frac{1}{Q}  \left(\av{A(w)}{J} - A(\av{w}{J})\right).
 \end{equation}




\begin{proof}
Take a function of two variables $B(u,v)=v-A(u)$. Then, as we have proved,
$$
B(u,v)-\frac{B(u-t, v-s) + B(u+t, v+s)}{2} = -\left(A(u)-\frac{A(u-t)+A(u+t)}{2}\right) \geqslant \alpha t^2 A''(u),
$$
whenever $B$ is defined at points we write.

We now take a weight $w$. Then $\av{w}{I_+} + \av{w}{I_-}=2\av{w}{I}$, and so $\av{w}{I_\pm}=\av{w}{I}\pm t$. Therefore,
$$
B(\av{w}{J}, \av{A(w)}{J}) \geqslant \frac{B(\av{w}{J_+}, \av{A(w)}{J_+}) + B(\av{w}{J_-}, \av{A(w)}{J_-})}{2} + \alpha (\Delta_J w)^2 A''(\av{w}{J}).
$$
We rewrite this inequality in the following form:
$$
|J|B(\av{w}{J}, \av{A(w)}{J}) \geqslant |J_+|B(\av{w}{J_+}, \av{A(w)}{J_+}) + |J_-|B(\av{w}{J_-}, \av{A(w)}{J_-}) + \alpha (\Delta_J w)^2 A''(\av{w}{J})|J|.
$$
Now we repeat this estimate down to $n$-th descendants of $J$. We denote this family by $\mathcal{D}_n(J)$. We get
$$
|J|B(\av{w}{J}, \av{A(w)}{J}) \geqslant \sum_{I\in\mathcal{D}_n(J)} |I|B(\av{w}{I}, \av{A(w)}{I})+\alpha\sli_{k\leqslant n}\sli_{I\in \mathcal{D}_k(J)}(\Delta_I w)^2 A''(\av{w}{I})|I|.
$$
Using that $B\geqslant 0$ whenever $v\geqslant A(u)$, which in our case is just Jensen's inequality, we get
$$
|J|B(\av{w}{J}, \av{A(w)}{J}) \geqslant \alpha\sli_{k\leqslant n}\sli_{I\in \mathcal{D}_k(J)}(\Delta_I w)^2 A''(\av{w}{I})|I|.
$$
Since the last estimate is true for any $n$, we pass to the limit and get
$$
|J|B(\av{w}{J}, \av{A(w)}{J}) \geqslant \alpha\sli_{I\in \mathcal{D}(J)}(\Delta_I w)^2 A''(\av{w}{I})|I|.
$$
But this is exactly what we want. Our proof is finished.
\end{proof}

\subsection{Proof of the ``inverse'' lemma \ref{inv_lemma}} \label{s: proof inv lemma}
\begin{proof}
We follow the skim of the previous proof. Take a function
$$
B(u,v)=\alpha v + A(u).
$$
Then $B$ satisfies the following inequality:
$$
B(x)-\frac{B(x+t, y+s-t^2 A''(x)) + B(x-s, y-s-t^2 A''(x))}{2} = A(x)-\frac{A(x-t)+A(x+t)}{2} + \alpha t^2A''(x) \geqslant 0.
$$
The last inequality is true for $A(x)=x^p$ or $A(x)=x\log x$ without additional assumptions, or for $A(x)=x^{-\frac{1}{p-1}}$ or $A(x)=-\log(x)$, if $t<\frac{C-1}{C}x$.

We now take a weight $w$. If $w$ is doubling (which we need only for the second part), then there exists a constant $D(w)$, such that for any dyadic interval $J$ the following is true:
$$
\av{w}{J}\leqslant D(w) \av{w}{J_{\pm}}.
$$
If now $\av{w}{J_{\pm}}=\av{w}{J}\pm t=x\pm t$, then
$$
x\leqslant D(w) (x-t),
$$
which implies
$$
t\leqslant \frac{C-1}{C}x
$$
for $C=D(w)$.

We now denote $u_I=\av{w}{I}$ and $v_I = \frac{1}{|I|}\sli_{R\in \mathcal{D}(I)} (\Delta_R(w))^2 A''(\av{w}{R})|R|$.

We notice that if $u_{I_\pm}=u_I \pm t$ then
$$
v_I - \frac{v_{I_+}+v_{I_-}}{2} = (\Delta_I(w))^2 A''(\av{w}{I})=t^2 A''(\av{w}{I})=t^2A''(u_J).
$$
So, $v_{I_\pm}=v_I \pm s - t^2 A''(\av{w}{I})$. Therefore, by our inequality for $B$, we get
$$
B(u_J, v_J)-\frac{B(u_{J_+})+B(u_{J_-})}{2} \geqslant 0.
$$
By the usual procedure, we get
$$
|J|B(u_J, v_J)\geqslant \sli_{I\in \mathcal{D}_n(J)}|I|B(u_I, v_I).
$$
We now introduce sequence of step functions: for a fixed $n$ we take the family $\{I\colon I\in \mathcal{D}_n(J)\}$, and
$$
u_n(t) = u_I, \; t\in I,
$$
$$
v_n(t)=v_I, \; t\in I.
$$
Then the last inequality is the same as
$$
|J|B(u_J, v_J)\geqslant \ili_J B(u_n(t), v_n(t))dt.
$$
We now notice that $B(u,v)=\alpha v + A(u)\geqslant A(u)$, so
$$
|J|B(u_J, v_J)\geqslant \ili_J A(u_n(t))dt.
$$
By Fatou's lemma,
$$
|J|B(u_J, v_J)\geqslant \liminf_n \ili_J A(u_n(t))dt \geqslant \ili_J \liminf_n A(u_n(t))dt = \ili_J A(w(t))dt.
$$
The last is true since for almost every $t$, by the Lebesgue Differentiation Theorem, we have $u_n(t)\to w(t)$, and because $A$ is a continuous function.

Dividing by $|J|$, we finally get $\alpha v_J + A(u_J)\geqslant \av{A(w)}{J}$, which finishes our proof.
\end{proof}

\subsection{Proof of Main Theorem I}

\label{s:proofH1<eAinfty}
\subsubsection{Notation and definition of function $B$}
For a point $z=(x,y)\in \R^2$ we denote $[z]=xe^{-y}$. For any number $Q$, $Q>1$, we define the domain $\Omega_Q$ as follows:
$$
\Omega_Q = \{ z=(x,y)\colon 1\leqslant [z]\leqslant Q \},
$$
and the boundaries of $\Omega_Q$ are
\begin{align*}
&\Gamma=\{z\colon [z]=1\} \\
&\Gamma_Q = \{z \colon [z]=Q\}.
\end{align*}
With any point $z\in \Omega_Q$ we associate two numbers: $v$ and $a$. We take our point $z$ and consider the line $\ell(z)$, tangent to $\Gamma_Q$, that ``kisses'' $\Gamma_Q$ on the right-hand side from $z$. The point $\ell(z)\cap \Gamma_Q$ is denoted by $(a, \log\frac{a}{Q})$. Now we draw $\ell(z)$ to the left until it intersects $\Gamma$, and the point of intersection is denoted by $(v, \log(v))$. Notice that $v\leqslant x \leqslant a$.

More carefully, let $\gamma = \gamma(Q)$, $\gamma\leqslant 1$, be the smaller solution of equation
$$
\gamma-\log(\gamma)-1 = \log(Q).
$$
Then the line $\ell(z)$ is given by a formula
$$
y=\frac{\gamma\cdot x}{v} + \log(v)-\gamma.
$$
This equation defines a unique $v$, such that $v\leqslant x$. Moreover, $a$ is given by $v=\gamma \cdot a$.
We are ready to introduce the Bellman Function. We give an explicit formula:
$$
B_Q(z)=B_Q(x,y)=x\cdot\log(v)+\frac{x-v}{\gamma}.
$$
\begin{zamech}
The equation on $t$, $t-\log(t)=\log(u)$, it rather famous and developed. In the mathematical program Maple this solution can be obtained using a command
$$
-LambertW(-\frac{1}{u}).
$$
Several inequalities in next sections can be checked by graphing related functions. The second author wants to emphasize his gratitude to developers of Maple.
\end{zamech}
\subsubsection{Main theorems and discussion}
The following theorem was proved in ~\cite{BR}.
\begin{theorem}
The function $B_Q(z)$ has following properties:
\begin{enumerate}
\item $B_Q(v, \log(v)) = v\log(v)$.
\item $B$ is smooth in $\Omega_Q$, and locally concave in $\Omega_Q$. Namely, if $z_1, z_2\in \Omega_Q$, $z=sz_1 + (1-s)z_2$ for some $s\in [0,1]$ and $\{tz_1 + (1-t)z_2\}\subset \Omega_Q$ then $$
    B(z)\geqslant sB(z_1)+(1-s)B(z_2).
    $$
\item For every point $z=(x,y)\in \Omega_Q$ there exists a function $w$, $[w]_\infty \leqslant Q$, such that $\ave{w}=x$, $\ave{\log(w)} = y$, and $\ave{w\log(w)} = B(x,y)$.
\end{enumerate}
\end{theorem}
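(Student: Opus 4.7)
The three claims separate naturally: (1) is a direct substitution at the boundary, (2) splits into smoothness by implicit differentiation and local concavity by a Hessian calculation, and (3) requires an explicit construction of an extremizer. For (1), when $z=(v_0,\log v_0)\in\Gamma$ the tangent line $\ell(z)$ passes through $z$, so $z$ is already one of the two intersections of $\ell(z)$ with $\Gamma$. These intersections correspond to the roots $\gamma<1<\tilde\gamma$ of $w-\log w=1+\log Q$ in $w=x/a$, so the second intersection lies to the right of $a$ while the leftmost one is $z$ itself. The construction then returns $v=v_0=x$, giving $B_Q(v_0,\log v_0)=v_0\log v_0+0$. Smoothness in (2) follows from the implicit function theorem applied to $y=\gamma x/v+\log v-\gamma$: the partial in $v$ equals $(v-\gamma x)/v^2$, which is strictly positive on the interior of $\Omega_Q$ (vanishing only on $\Gamma_Q$, where $v=\gamma a=\gamma x$).

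For the local concavity in (2), I would compute the Hessian directly. Implicit differentiation gives $v_x=-\gamma v/(v-\gamma x)$ and $v_y=v^2/(v-\gamma x)$; then $B_x=\log v+1+1/\gamma$ and $B_y=-v/\gamma$. One further round of differentiation yields
\[
\nabla^2 B_Q \;=\; \frac{1}{v-\gamma x}\begin{pmatrix} -\gamma & v \\ v & -v^2/\gamma \end{pmatrix}.
\]
Both diagonal entries are nonpositive and $\det\nabla^2 B_Q=0$, so the Hessian is negative semidefinite throughout the interior of $\Omega_Q$. This forces $B_Q$ to be concave along every line segment contained in $\Omega_Q$, which is precisely the local concavity asserted (the non-convexity of $\Omega_Q$ is not an obstruction because the statement only requires concavity along segments that stay inside $\Omega_Q$). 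The kernel of the Hessian is spanned by $(1,\gamma/v)$, the direction of $\ell(z)$: equivalently, along each tangent line $v$ is constant and $B_Q(x,y)=x\log v+(x-v)/\gamma$ is honestly affine in $x$.

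For (3) I would exploit the foliation of $\Omega_Q$ by tangent lines. Write $z$ as a convex combination $z=(1-\lambda)(v,\log v)+\lambda(a,\log(a/Q))$ of the $\Gamma$-endpoint and the $\Gamma_Q$-tangency point of $\ell(z)$. On a subinterval of $[0,1]$ of measure $1-\lambda$ put $w\equiv v$, a constant whose averages land at $(v,\log v)$ and whose $\ave{w\log w}$ equals $v\log v=B_Q(v,\log v)$ by (1); on the remaining subinterval of measure $\lambda$ use the explicit power-law extremizer at the $\Gamma_Q$-point $(a,\log(a/Q))$, which saturates $[w]_\infty=Q$. Because $B_Q$ is affine along $\ell(z)$ and matches $v\log v$ at the $\Gamma$-endpoint, averaging the two pieces reproduces $\ave{w}=x$, $\ave{\log w}=y$, and $\ave{w\log w}=B_Q(z)$ exactly. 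The main obstacle I foresee is precisely step (3): verifying $[w]_\infty\leq Q$ on \emph{every} subinterval and not merely on $[0,1]$, i.e., checking that the splice of the constant piece with the $\Gamma_Q$-extremizer does not create a subinterval with a larger Muckenhoupt ratio. This is the usual sharpness computation for this kind of Bellman extremal and requires a careful case analysis around the splicing point; the calculations in (1)--(2) are essentially mechanical once the formula for $B_Q$ is given.
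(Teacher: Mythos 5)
The paper does not actually prove this theorem; it states ``The following theorem was proved in [BR]'' and cites the authors' earlier preprint. There is therefore no internal argument to compare your proof against. Judging your proposal on its own merits:

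Parts (1) and (2) are correct. In (1), the tangent line through $z=(v_0,\log v_0)\in\Gamma$ meets $\Gamma$ at abscissae $\gamma a$ and $\tilde\gamma a$ where $\gamma<1<\tilde\gamma$ are the two roots of $s-\log s=1+\log Q$, and since the tangency abscissa $a$ satisfies $a\geqslant x=v_0$, the left intersection is $v=v_0$, giving $B_Q(v_0,\log v_0)=v_0\log v_0$. In (2), your implicit differentiation is exactly right: $v_x=-\gamma v/(v-\gamma x)$, $v_y=v^2/(v-\gamma x)$, $B_x=\log v+1+1/\gamma$, $B_y=-v/\gamma$, and the Hessian
$$
\nabla^2 B_Q=\frac{1}{v-\gamma x}\begin{pmatrix}-\gamma & v\\ v & -v^2/\gamma\end{pmatrix}
$$
has trace $<0$ and determinant $\equiv 0$, so it is negative semidefinite for $v-\gamma x>0$ (the Monge--Amp\`ere degeneracy is the hallmark of such Bellman functions). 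Since $B_Q$ is $C^1$ up to $\Gamma_Q$ with $v$ continuous there, concavity along segments in $\Omega_Q$ follows. Your identification of the null direction $(1,\gamma/v)$ with the tangent line, and of the restriction of $B_Q$ to each tangent line as affine, is also right and is what the extremizer construction in (3) exploits.

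The genuine gap is in (3), and you have flagged it yourself, but it is worth being precise about why it is the hard part and why it cannot be waved away. Once you put $w\equiv v$ on a piece of measure $1-\lambda$ and the power law $w_0(t)=(1-\alpha)a\,t^{-\alpha}$ (with $\alpha=1-\gamma$) rescaled onto a piece of measure $\lambda$, the Bellman identity $\ave{w\log w}=B_Q(z)$ is automatic from the affine structure along $\ell(z)$, and the pieces even splice continuously (both equal $v=\gamma a$ at the junction). What is \emph{not} automatic is $\av{w}{I}\,e^{-\av{\log w}{I}}\leqslant Q$ for every subinterval $I\subset[0,1]$, including those straddling the splice. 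This requires showing (a) that for the pure power-law piece the $A_\infty$ ratio is maximized on intervals anchored at the singularity, and (b) that cutting off a weight by a constant at the level of its boundary value does not increase its $A_\infty$ constant. Point (b) is precisely the content of the cut-off lemma in \cite{RVV}, which the present paper already cites and invokes in the dyadic version of the argument (proof of Case 2 of the theorem following Theorem \ref{largerB}); you should invoke it here too. Without (a) and (b) the construction only shows that $B_Q$ is \emph{attained on the full interval}, not that the constructed $w$ lies in the admissible class $[w]_\infty\leqslant Q$, which is the whole point of the extremality claim. So as written, part (3) of your proof is a plan rather than a proof, and the nontrivial content of the theorem is concentrated exactly there.
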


This theorem implies the following (see ~\cite{BR}).
\begin{theorem}
The following equality holds:
$$
B_Q(x,y)=\sup \{\ave{w\log(w)} \colon \ave{w}=x, \ave{\log(w)}=y, [w]_\infty\leqslant Q\}.
$$
\end{theorem}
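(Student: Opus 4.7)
My plan is to prove the two inequalities $B_Q(x,y)\leqslant \sup$ and $B_Q(x,y)\geqslant \sup$ separately, using the three properties of $B_Q$ established in the preceding theorem.

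The direction $B_Q(x,y)\leqslant \sup$ is immediate from property~(3): it produces an explicit weight $w^{\ast}$ with $\ave{w^{\ast}}=x$, $\ave{\log w^{\ast}}=y$, $[w^{\ast}]_\infty\leqslant Q$, and $\ave{w^{\ast}\log w^{\ast}}=B_Q(x,y)$. This $w^\ast$ is admissible on the right-hand side, so the supremum is at least $B_Q(x,y)$.

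For the opposite direction I would run a standard Bellman induction. Fix an interval $J$ and an admissible weight $w$ with $\ave{w}_J=x$, $\ave{\log w}_J=y$, $[w]_\infty\leqslant Q$, and set $z_I:=(\ave{w}_I,\ave{\log w}_I)$ for every subinterval $I\subset J$. Jensen's inequality forces $[z_I]\geqslant 1$ and the hypothesis $[w]_\infty\leqslant Q$ gives $[z_I]\leqslant Q$, so every $z_I$ lies in $\Omega_Q$. Dyadically splitting $J$ into halves $J_\pm$ gives the midpoint identity $z_J=\tfrac{1}{2}(z_{J_+}+z_{J_-})$; applying local concavity (property~(2)) along the chord from $z_{J_-}$ to $z_{J_+}$ yields
$$
|J|\,B_Q(z_J)\;\geqslant\;|J_+|\,B_Q(z_{J_+})+|J_-|\,B_Q(z_{J_-}).
$$
Iterating over $n$ generations $\D_n(J)$ of dyadic descendants,
$$
|J|\,B_Q(z_J)\;\geqslant\;\sli_{I\in \D_n(J)}|I|\,B_Q(z_I)\;=\;\ili_J B_Q\bigl(u_n(t),v_n(t)\bigr)\,dt,
$$
where $u_n,v_n$ are the step functions equal to $\ave{w}_I$ and $\ave{\log w}_I$ on $I\in \D_n(J)$. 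By the Lebesgue differentiation theorem, $u_n(t)\to w(t)$ and $v_n(t)\to \log w(t)$ almost everywhere; continuity of $B_Q$, together with property~(1) giving $B_Q(w,\log w)=w\log w$ at limit points on $\Gamma$, and a Fatou/dominated convergence passage to the limit, turn the right-hand side into $\ili_J w\log w\,dt$. Dividing by $|J|$ produces $B_Q(x,y)\geqslant \ave{w\log w}$, and taking the supremum over admissible $w$ completes the proof.

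The main obstacle is the chord-in-domain hypothesis of property~(2): although $z_{J_\pm}\in \Omega_Q$, the straight segment connecting them can a priori leave $\Omega_Q$ through the upper curve $\Gamma_Q$, because $\log[z]$ is concave (not convex) along line segments, so the maximum of $[z]$ on a chord can exceed the values at the endpoints. In the continuous setting this is resolved by first restricting to bounded weights, for which $z_{J_\pm}$ sit strictly inside $\Omega_Q$ and the chord remains in $\Omega_Q$ after a uniform small buffer, and then approximating a general admissible $w$ by truncations $w_N=\min(w,N)$ and passing to the limit. An alternative is to work infinitesimally: the Hessian-level concavity of $B_Q$ at each interior point of $\Omega_Q$ (equivalent to property~(2) along short enough chords) is enough to drive the iteration when the dyadic cubes become small. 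This domain subtlety, harmless in the continuous case, is precisely what forces the delicate treatment of the dyadic analogue alluded to in the discussion at the start of this section.
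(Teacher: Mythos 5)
Your proof of the direction $B_Q(x,y)\leqslant\sup$ is exactly the paper's (invoke property (3) to produce an extremizer).

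The reverse direction has a genuine gap. You run the Bellman iteration with \emph{dyadic bisection} $z_J=\tfrac12(z_{J_+}+z_{J_-})$, but the theorem in question is the \emph{continuous} one ($[w]_\infty\leqslant Q$, not $[w]_\infty^d$), and the paper's argument is emphatically not bisection: it splits $I$ into two abutting intervals $I_\pm$ of \emph{arbitrary} relative length, so that $z_I$ is an arbitrary convex combination $\frac{|I_-|}{|I|}z_{I_-}+\frac{|I_+|}{|I|}z_{I_+}$, and this freedom is precisely what lets one choose the split point so the chord $[z_-,z_+]$ stays inside $\Omega_Q$. You correctly flag that the chord from a forced bisection can escape $\Omega_Q$, but neither of your proposed repairs works. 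Boundedness of $w$ does \emph{not} push $z_{J_\pm}$ strictly into the interior: $[z_I]=\av{w}{I}\,e^{-\av{\log w}{I}}$ is controlled by $[w]_\infty$, not by $\|w\|_\infty$, so a bounded weight with $[w]_\infty$ near $Q$ can have averages sitting right on $\Gamma_Q$. The paper's own lemma (just after the statement you are proving) shows that for bisection the midpoint chord can leave $\Omega_{CQ}$ for every fixed $C$, so no finite ``buffer'' helps; and the chords produced by bisection do not shrink with the dyadic scale for a generic admissible $w$, so the ``infinitesimal Hessian'' variant does not apply either. If you insist on bisection, you are led exactly to the delicate dyadic analysis of Theorem~\ref{largerB}, which only yields the weaker bound $B_{Q_0}(z)\geqslant\av{w\log w}{}$ with $Q_0>Q$ — not the exact equality with $B_Q$ that this theorem asserts.

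In short: the continuous theorem hinges on non-bisecting splits, which your write-up replaces by bisection; that substitution breaks the chord-in-domain step and cannot be patched by truncation or by a buffer argument. You should instead keep the splitting ratio free, choose it at each step so that $[z_-,z_+]\subset\Omega_Q$, and then run the same limiting argument (truncation to handle general $w$, as the paper and \cite{BR} do).
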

We sketch the proof of this theorem.
\begin{proof}
The third property of $B$ implies that $B_Q(x,y)$ is not strictly bigger than the right-hand side. For the other direction, we take a point $z=(x,y)\in \Omega_Q$ and a function $w$, such that $\ave{w}=x, \ave{\log(w)}=y, [w]_\infty\leqslant Q$.
We now take two intervals $I_\pm$, such that $I_+ \cup I_- = I$, $I_+\cap I_- =$right end of $I_-$. We take
\begin{equation}\label{tralyalya}
z_\pm=(x_\pm, y_\pm)= (\av{w}{I_\pm}, \av{\log(w)}{I_\pm})\in \Omega_Q.
\end{equation}
Assuming that the interval $[z_-, z_+]$ lies in $\Omega_Q$, we write
$$
B(z)\geqslant \frac{|I_-|}{|I|}B(z_-) + \frac{|I_+|}{|I|}B(z_+).
$$
Repeating this procedure, we get
$$
B(z)\geqslant \sli_{n=1}^{N} \frac{|I^n|}{|I|}B(z_n),
$$
where $z_n = (\av{w}{I^n}, \av{\log(w)}{I^n})$. We now introduce a pair of step functions.
Let
\begin{align*}
&u_N(t) = \sli_{n=1}^N \av{w}{I^n} \chi_{I^n} (t),\\
&v_N(t)=\sli_{n=1}^N \av{\log(w)}{I^n} \chi_{I^n} (t).
\end{align*}
Then we have
$$
B(z)\geqslant \ili_I B(u_N(t), v_N(t)) dt.
$$
If $w$ is separated from $0$ and $\infty$ then, by the Lebesgue theorem, we get that
\begin{align*}
&u_N(t)\to w(t)\; \mbox{a.e.} \\
&v_N(t)\to \log(w(t)) \; \mbox{a.e.}.
\end{align*}
Therefore,
$$
B(z)\geqslant \ili_I B(w(t), \log(w(t)))dt = \ili_I w(t)\log(w(t))dt = \ave{w\log(w)}.
$$
In the chain above we used that $B$ is bounded on compact sets, so we can apply the Lebesgue Dominated Convergence Theorem, and the second property of the function $B$.
The proof is finished.
\end{proof}
\begin{zamech}
Careful reader can see two gaps in the proof above. First, we never introduced a proper procedure of choosing intervals $I_\pm$. And second, we focused on bounded functions $w$ (and separated from $0$) without saying anything about the general case. We refer to the paper ~\cite{BR}, where all details are given.
\end{zamech}
\begin{zamech}
We now point out the main difficulty of the dyadic case. In the proof above we had a formula \eqref{tralyalya}. We claimed that $z_\pm=(x_\pm, y_\pm)= (\av{w}{I_\pm}, \av{\log(w)}{I_\pm})\in \Omega_Q$. In the dyadic case though this can be claimed only if $I_\pm$ are {\bf dyadic} intervals! Therefore, we do not have any procedure of choosing $I_\pm$ except for splitting $I$ in two halves, et cetera. The main problem now is that we can never be sure that the segment $[z_-, z_+]$ lies entirely in the domain $\Omega_Q$.
\end{zamech}
After these two remarks we state the main theorem, that works for dyadic setting.
Let $B_Q$ be a function, described above, defined in the domain $\Omega_Q$. For any $Q_0>Q$ we define $\Omega_{Q_0}=\Omega_0$, $\gamma_{Q_0}=\gamma_0$, $v_{Q_0}=v_0$, $a_{Q_0}=a_0$, and $B_{Q_0}(z)=B_0(z)$ as we did for $Q$. Then
\begin{theorem}\label{largerB}
There exists a constant $C$, which does not depend on $Q$, and a number $Q_0$, such that $1<Q<Q_0<CQ$, and such that the function $B_0$ has the following additional property: whenever $z, z_+, z_- \in \Omega_Q$, and $z=\frac{z_+ + z_-}{2}$, the following inequality holds:
$$
2B_0(z)\geqslant B_0(z_+) + B_0(z_-).
$$

If $r=\sqrt{1-\frac{1}{Q}}$ then $Q_0$ is given by equation
$$
(1-r)\log(\gamma_0) + \frac{1-r}{\gamma_0} -(1-r) - (1-r)\log(1-r)-(1+r)\log(1+r)=0.
$$
\end{theorem}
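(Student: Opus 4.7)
The plan is to take $Q_0$ as the unique solution of the transcendental equation stated in the theorem (using the defining relation $\gamma_0-\log\gamma_0-1=\log Q_0$) and verify separately (i) the bound $Q_0<CQ$ for a universal constant $C$, and (ii) the midpoint inequality on $\Omega_Q$. For (i), I would rewrite the defining equation as $\log(Q_0/Q)=\gamma_0+1/\gamma_0-2-\frac{2r}{1-r}\log(1+r)$, using $\log Q=-\log(1-r^2)$, and run the asymptotic analysis in $r$: at $r\to 0^+$ (i.e.\ $Q\to 1^+$), one has $\gamma_0\to 1$ and $Q_0/Q\to 1$; at $r\to 1^-$ (i.e.\ $Q\to\infty$), the dominant balance gives $\gamma_0\sim (1-r)/(2\log 2)$ and $\log(Q_0/Q)\to\log(4\log 2)-1$. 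Continuity in $r$ on $[0,1]$ then yields the universal $C$.

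For (ii), observe that local concavity of $B_{Q_0}$ on $\Omega_{Q_0}$ is not sufficient on its own, since the preceding Remark shows that the segment $[z_-,z_+]$ can exit $\Omega_{Q_0}$ even when $z,z_\pm\in\Omega_Q$. I would therefore exploit the special structure of $B_0$: a direct computation from the implicit relation $\gamma_0 x=v(y+\gamma_0-\log v)$ shows that $\nabla B_0=(\log v+1/\gamma_0+1,\,-v/\gamma_0)$ depends only on $v$, so that $B_0$ is linear along each tangent line to $\Gamma_{Q_0}$. This yields closed-form boundary restrictions $B_0|_\Gamma(x,y)=x\log x$ and $B_0|_{\Gamma_Q}(x,y)=x\log x+Kx$, where $K$ is an explicit constant depending on $\gamma_0$ and on the auxiliary $\gamma^*:=\gamma(Q_0/Q)$ (the smaller root of $\gamma-\log\gamma-1=\log(Q_0/Q)$).

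With the boundary formulas in hand, I would identify the extremal admissible triple where $\Delta:=2B_0(z)-B_0(z_+)-B_0(z_-)$ vanishes. Following the continuous-case strategy of \cite{BR} adapted to the dyadic midpoint constraint, the extremum is attained at an asymmetric boundary configuration in which $z_-$ lies on $\Gamma$ and $z_+$ lies on $\Gamma_Q$ (or vice versa), with the ratio $x_+/x_-$ pinned by the midpoint constraint. Plugging the boundary formulas into $\Delta=0$, the dependence on the absolute scale of $x_\pm$ cancels, and using the defining equations for $\gamma_0$ and $\gamma^*$ to eliminate the auxiliary $\gamma^*$ yields precisely the transcendental equation for $Q_0$ in the theorem. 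This identifies $Q_0$ as the minimal enlargement of $Q$ for which the inequality can hold on the extremal triple.

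The principal obstacle is showing $\Delta\geq 0$ for \emph{every} admissible triple, not just the extremal one. When the segment $[z_-,z_+]$ leaves $\Omega_{Q_0}$, concavity on the full segment is unavailable, so one must continuously deform a general admissible triple to the extremal boundary configuration while verifying that $\Delta$ remains non-negative throughout. This requires a careful case analysis based on the positions of $z,z_\pm$ relative to $\Gamma$ and $\Gamma_Q$ and on which branch of the implicit equation for $v$ is selected at each point; mirroring (with dyadic-specific complications) the continuous argument in \cite{BR}, this case analysis accounts for the bulk of the paper's eighteen-page proof.
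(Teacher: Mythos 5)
Your structural observations are correct and give a genuinely different angle than the paper's: the computation that $\nabla B_0=(\log v+1/\gamma_0+1,\,-v/\gamma_0)$ depends only on $v$ (so $B_0$ is affine along tangent lines to $\Gamma_{Q_0}$), the closed-form boundary restrictions $B_0|_\Gamma=x\log x$ and $B_0|_{\Gamma_Q}=x\log x+Kx$, the rewriting $\log(Q_0/Q)=\gamma_0+1/\gamma_0-2-\frac{2r}{1-r}\log(1+r)$, and the asymptotic balance $\gamma_0\sim\frac{1-r}{2\log 2}$, $Q_0/Q\to\frac{4\log 2}{e}$ as $Q\to\infty$ are all internally consistent and give a legitimate alternative proof of the existence and uniform boundedness of $Q_0$ (part (i)). The paper instead proves $Q_0<CQ$ by a geometric lemma: it checks that for the extremal triple the chord $[z_-,z_+]$ stays inside $\Omega_{CQ}$, so local concavity of $B_{CQ}$ gives $\Delta(CQ)\geq 0$, and since $\Delta(Q)\leq 0$ and $\Delta(Q_0)$ is monotone in $Q_0$, the root lies in $[Q,CQ]$. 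Both approaches are valid; yours is more quantitative, the paper's requires no asymptotic expansion.

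However, the proposal has a genuine gap precisely where you locate it: the central claim of the theorem — that $\Delta=2B_0(z)-B_0(z_+)-B_0(z_-)\geq 0$ for \emph{every} admissible triple $z,z_\pm\in\Omega_Q$ with $2z=z_++z_-$ — is not proved. Saying one should ``continuously deform a general admissible triple to the extremal boundary configuration while verifying that $\Delta$ remains non-negative throughout'' names the goal, not a method, and you acknowledge this by deferring to ``the bulk of the paper's eighteen-page proof.'' The paper resolves this by a specific device you do not reproduce: it changes variables to $(\alpha_+,\alpha_-,x_+)$ where $\alpha=y-\log(x/Q_0)$ is the vertical distance to $\Gamma_{Q_0}$, making the constraint $\alpha_++\alpha_-+\log(x_+(2-x_+))=2\alpha$ a smooth manifold over a near-rectangular parameter box $[\log(Q_0/Q),\log Q_0]^2$; then it runs Lagrange multipliers and shows (a) any interior critical point gives $\Delta=0$, (b) a symmetry lemma ($\Delta(u,v)\geq\Delta(v,u)$ for $u>v$) reduces to $\alpha_-\geq\alpha_+$, (c) on the edges $\alpha_++\alpha_-=2\alpha$ and $\alpha_-=\log Q_0$ the critical-point condition again forces $\Delta=0$, (d) on the edge $\alpha_+=\log(Q_0/Q)$ either a geometric lemma places $[z_-,z_+]$ inside $\Omega_{Q_0}$ so concavity applies, or monotonicity of $f(t)=t-\log t-1$ gives $\Delta\geq 0$, and (e) at the two remaining vertices $\Delta$ is shown monotone in $x_+$ with nonnegative endpoint value, the final endpoint being precisely the defining equation for $Q_0$. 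Without this (or an equivalent) reduction, the theorem is not established; identifying the extremal triple and solving $\Delta=0$ there is necessary but not sufficient. Also note that your claim that the extremum is attained at the asymmetric boundary configuration ($z_-\in\Gamma$, $z_+\in\Gamma_Q$) is itself a nontrivial fact — it is an \emph{output} of the paper's case analysis, not something that can be assumed at the outset.
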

\begin{zamech}
We notice that this equation defines $\gamma_0$, which immediately defines $Q_0$.
\end{zamech}
\begin{zamech}
The following thing happened. We claim that we can take a larger domain and a function $B_{Q_0}$, which is bigger than $B_Q$, and which has the property: if three points $z, z_\pm$, described above, lie in the {\bf small} domain $\Omega_Q$, then $2B_0(z)\geqslant B_0(z_+) + B_0(z_-)$, even though the interval $[z_-, z_+]$ does not lie even in $\Omega_{Q_0}$.
\end{zamech}
The fact that the solution $Q_0$ of the equation above can be bounded by $CQ$ will be proved later. To emphasize the difficulty of the problem we prove a lemma, that shows the difference of our problem from the problem solved in ~\cite{SlVa}.
\begin{lemma}
For any constant $C$, $C>0$, there exists a number $Q$, $Q>1$, and three points $z, z_\pm \in \Omega_Q$, such that $2z = z_+ + z_-$, and such that for some value of $t\in (0,1)$ we have the following:
$$
[tz_+ + (1-t)z_-] \geqslant CQ.
$$
\end{lemma}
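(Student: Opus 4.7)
The plan is to exploit the nonconvexity of the sub-level set $\{[z]\leq R\}$: since $[z]=xe^{-y}$, this set is the region above the graph of the concave function $y=\log x-\log R$, so chords between points with $[z]$ close to $1$ can bulge strongly upward in the $[z]$-value. Concretely, I will pick both endpoints $z_\pm$ on the upper boundary $\Gamma$ but very far apart in the $x$-direction, and show that along the chord connecting them $[z]$ becomes arbitrarily large relative to the midpoint value $[z]$.

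For a large parameter $R>e$ I will set
\[
z_-:=(1,0),\qquad z_+:=(R,\log R),\qquad z:=\tfrac12(z_-+z_+)=\left(\tfrac{1+R}{2},\tfrac{\log R}{2}\right).
\]
Since $[z_\pm]=1$, both endpoints lie on $\Gamma$. A direct computation gives
\[
[z]=\tfrac{1+R}{2}\,e^{-(\log R)/2}=\frac{1+R}{2\sqrt{R}}.
\]
I then define $Q:=(1+R)/(2\sqrt{R})$, so $Q>1$ and $Q\to\infty$ as $R\to\infty$. With this choice $z\in\Gamma_Q$ and $z_\pm\in\Gamma$, hence $z,z_+,z_-\in\Omega_Q$ and $2z=z_++z_-$. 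Parametrizing the chord, I obtain $z_t:=tz_++(1-t)z_-=(1+t(R-1),\,t\log R)$, so
\[
[z_t]=\bigl(1+t(R-1)\bigr)R^{-t}.
\]

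Next, I will pick the auxiliary point $t_0:=1/\log R\in(0,1)$ (valid once $R>e$), giving $R^{-t_0}=e^{-1}$ and
\[
[z_{t_0}]=\frac{1}{e}\left(1+\frac{R-1}{\log R}\right).
\]
Forming the ratio,
\[
\frac{[z_{t_0}]}{Q}=\frac{2\sqrt{R}\bigl(1+(R-1)/\log R\bigr)}{e(1+R)}\;\sim\;\frac{2\sqrt{R}}{e\log R}\;\longrightarrow\;\infty\quad(R\to\infty).
\]
So, given any $C>0$, I choose $R$ large enough that this ratio exceeds $C$, and take $t=t_0$: then $[tz_++(1-t)z_-]\geq CQ$, with $z,z_+,z_-\in\Omega_Q$, which is the claim.

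There is no genuinely hard step; the only content is the asymptotic comparison between $Q\sim\sqrt{R}/2$ and the chord maximum, which is of order $R/\log R$. The point of the lemma is precisely to show that no $Q$-independent enlargement of $\Omega_Q$ can contain every chord whose endpoints and midpoint lie in $\Omega_Q$, in contrast to the situation in \cite{SlVa}; this is why the authors cannot reduce the dyadic inequality to the continuous one by a soft argument and instead must prove Theorem \ref{largerB}.
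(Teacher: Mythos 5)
Your proof is correct. The strategy is the same as the paper's: pick three extreme points on the boundary of $\Omega_Q$ satisfying $2z=z_++z_-$ and compute that the chord between $z_-$ and $z_+$ rises far above the level $Q$. The only difference is the configuration: you place both endpoints on the inner boundary $\Gamma$ with $z$ on $\Gamma_Q$ (after rescaling $x\mapsto 1$, this is exactly the paper's ``Case 3'' triple $z_\pm=(1\pm r,\log(1\pm r))$, $z=(1,\log\tfrac1Q)$ with $r=\sqrt{1-1/Q^2}$), whereas the paper's stated proof uses the ``Case 1'' triple $z_-=(1-r,\log\tfrac{1-r}{Q})$, $z=(1,\log\tfrac1Q)$, $z_+=(1+r,\log(1+r))$ with $r=\sqrt{1-1/Q}$; both choices give a chord maximum of order $Q/\log Q$ versus the level $Q$, hence the same conclusion.
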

\begin{zamech}\label{cannotenlarge}
This lemma shows that for any given $Q_0=CQ$ there may be three points in $\Omega_Q$, such that the interval $[z_-, z_+]$ does not lie entirely in $\Omega_{Q_0}$.

We note that in ~\cite{SlVa} such constant $C$ existed, which did not simplify the search for the very best constant, but would give us linear dependence on $[w]_\infty$ at once. If this was the case, we would simply take $Q_0 = CQ$, and since $z_\pm, z\in \Omega_Q$ implied $[z_-, z_+]\in \Omega_{Q_0}$, where $B_{Q_0}$ is locally concave, we would get $2B_{Q_0}(z)\geqslant B_{Q_0}(z_+) + B_{Q_0}(z_-)$.

Since such $C$ does not exist, we are forced to continue our investigation.
\end{zamech}
\begin{proof}
This is an easy calculation. In fact, these points are $z_- = (1-r, \log\frac{1-r}{Q})$, $z=(1, \log\frac{1}{Q})$, and $z_+=(1+r, \log(1+r))$.
\end{proof}

As a main consequence of Theorem \ref{largerB} we get the following.
\begin{theorem}
For every point $z=(x,y)\in \Omega_Q$ the following inequality holds:
$$
B_{0}(z)\geqslant \sup \{\ave{w\log(w)} \colon \ave{w}=x, \ave{\log(w)}=y, [w]^{d}_{\infty}\leqslant Q\}.
$$
\end{theorem}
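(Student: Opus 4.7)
The plan is to mimic the continuous-case Bellman argument sketched earlier in the excerpt, replacing the convex-combination concavity of $B_Q$ along chords lying inside $\Omega_Q$ by the strictly dyadic concavity inequality provided by Theorem \ref{largerB}. This is the exact substitute one needs, because when we split a dyadic interval into its two equal halves the relevant three points satisfy $z=\tfrac{z_++z_-}{2}$, and Theorem \ref{largerB} says $2B_0(z)\geqslant B_0(z_+)+B_0(z_-)$ without requiring the segment $[z_-,z_+]$ to sit inside any domain of concavity.

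Fix $z=(x,y)\in \Omega_Q$ and a weight $w$ on an interval $I$ with $\av{w}{I}=x$, $\av{\log w}{I}=y$ and $[w]^{d}_{\infty}\leqslant Q$. For every dyadic $J\subset I$ set $z_J=(\av{w}{J},\av{\log w}{J})$. The hypothesis $[w]^d_\infty\leqslant Q$ together with Remark \ref{z: def Aoo} and Jensen's inequality gives $1\leqslant [z_J]\leqslant Q$, so $z_J\in \Omega_Q$. Because $|J^+|=|J^-|=\tfrac12|J|$, averaging yields $z_J=\tfrac{z_{J^+}+z_{J^-}}{2}$, and Theorem \ref{largerB} applies:
$$
|J|\,B_0(z_J)\;\geqslant\;|J^+|\,B_0(z_{J^+})+|J^-|\,B_0(z_{J^-}).
$$
Iterating this relation down to the $N$-th generation of dyadic descendants $\mathcal{D}_N(I)$ gives
$$
|I|\,B_0(z)\;\geqslant\;\sum_{J\in \mathcal{D}_N(I)}|J|\,B_0(z_J).
$$

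To finish, introduce step functions $u_N(t):=\av{w}{J}$ and $v_N(t):=\av{\log w}{J}$ for $t\in J\in \mathcal{D}_N(I)$, so that the previous estimate reads
$$
|I|\,B_0(z)\;\geqslant\;\int_I B_0\bigl(u_N(t),v_N(t)\bigr)\,dt.
$$
By the Lebesgue differentiation theorem, $u_N\to w$ and $v_N\to \log w$ a.e. in $I$. A truncation of $w$ to levels $\delta<w<M$ (identical to the one described in \cite{BR}) keeps the trajectories inside a compact subset of $\Omega_{Q_0}$ on which $B_0$ is continuous and bounded, so dominated convergence passes to the limit and yields
$$
|I|\,B_0(z)\;\geqslant\;\int_I B_0\bigl(w(t),\log w(t)\bigr)\,dt\;=\;\int_I w(t)\log w(t)\,dt,
$$
using that $B_0(u,\log u)=u\log u$ on the lower boundary $\Gamma$. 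Removing the truncation ($\delta\to0$, $M\to\infty$) and taking the supremum over admissible $w$ gives the claimed bound.

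The main obstacle is not the algebraic step, which is dictated by Theorem \ref{largerB}, but rather the analytic justification of the limit for unbounded weights: one must control $B_0$ near the boundary $\Gamma_{Q_0}$ and near $u=0$, and verify that the truncation of $w$ only decreases the relevant averages. These points are handled in \cite{BR} for the continuous analogue, and the same argument transports without change, since the enlargement $Q\mapsto Q_0\leqslant CQ$ from Theorem \ref{largerB} keeps us in a domain on which the explicit formula for $B_0$ is smooth and locally bounded.
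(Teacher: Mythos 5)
Your argument is correct and runs along the same lines as the paper's proof: apply the enlarged-domain midpoint inequality of Theorem \ref{largerB} to each dyadic halving, telescope down to the $N$-th generation, pass to the limit via Lebesgue differentiation and dominated convergence on compact pieces of $\Omega_{Q_0}$, and finish with the boundary identity $B_0(u,\log u)=u\log u$ on $\Gamma$. The only cosmetic deviation is your symmetric truncation $\delta<w<M$, where the paper truncates from below at the level $1$ (so that the discarded part of $w\log w$ is nonpositive and monotone convergence applies cleanly), but both proofs defer the analytic details of removing the truncation to \cite{BR}, so this is not a substantive difference.
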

\begin{proof}
We take a point $z$, $1\leqslant[z]\leqslant Q$, and a function $w$, such that $\ave{w}=x$,
$\ave{\log(w)}=y$, and $[w]_\infty^d \leqslant Q$.
\paragraph{Case 1: $w$ is bounded away from $0$ and $\infty$}
We take $I^0=I$, $I^1_{1,2}$ --- left and right halves of $I$. Then $I^2_{1,2,3,4}$ are quarters of $I$, et cetera. For every $k,n$ we have $z^k_n(\av{w}{I^k_n}, \av{\log(w)}{I^k_n}) \in \Omega_Q$, and every $z^k_n$ is a center of interval, that corresponds to ``sons'' of $I^k_n$. Therefore, for a fixed $k$,
$$
B_0(z)\geqslant \sli_n \frac{|I^k_n|}{|I|} B_0(z^k_n) = \ili_I B_0(u_k(t), v_k(t))dt,
$$
where
\begin{align*}
&u_k(t) = \sli_n \av{w}{I^k_n} \chi_{I^k_n} (t),\\
&v_k(t)=\sli_n \av{\log(w)}{I^k_n} \chi_{I^k_n} (t).
\end{align*}
Since $w$ is separated from $0$ and $\infty$, we get
\begin{align*}
&u_k(t) \to w(t), \; \mbox{a.e.}\\
&v_k(t) \to \log(w(t)), \; \mbox{a.e.}.
\end{align*}
Since we have countably many intervals $\{I^k_n\}_{n,k}$, the set $Z=\{z^k_n\}$ is a compact, and thus the function $B_0$ is bounded on $Z$. Therefore, by Lebesgue Dominated Convergence Theorem,
$$
B_0(z)\geqslant \ave{w\log(w)}.
$$
\paragraph{Case 2: arbitrary $w$}
We sketch the proof here, as it is the same as in ~\cite{BR}. We take
$$
w_n(t) = \begin{cases} n, &w(t)\geqslant n \\
                       w(t), &1\leqslant w(t)\leqslant n \\
                       1, &w(t)\leqslant 1.
                       \end{cases}
$$
Then, as it follows from ~\cite{RVV}, $[w_n]_{\infty}^d \leqslant Q$. By the previous case we get
$$
B_0(z)\geqslant \ave{w_n \log (w_n)}=\ili_{\{t\colon w(t)\geqslant 1\}} w_n \log(w_n).
$$
On the set $\{t\colon w(t)\geqslant 1\}$ the sequence $w_n(t) \log(w_n)(t)$ increases to $w(t)\log(w(t))$, and passing to the limit, we get
$$
B_0(z)\geqslant \ili_{\{t\colon w(t)\geqslant 1\}} w(t) \log(w(t))dt \geqslant \ili_{I} w(t) \log(w(t))dt =\ave{w\log(w)}.
$$
The last inequality holds simply because on the set $w(t)<1$ we have $w(t)\log(w(t)) \leqslant 0$.

Our proof is now finished.
\end{proof}

The rest of this section is devoted to the proof of the Theorem \ref{largerB}. The uncurious reader can skip this proof since it does not involve any weight theory.

\subsubsection{Proof of the Theorem \ref{largerB}: reminder}
At first we would like to remind the reader some notation. We fix a number $Q$, $Q>1$. In what follows the number $Q_0$ is always bigger than $Q$.

For every point $z=(x,y)$, such that $xe^{-y}\in [1,Q]$ we denote $[z]=xe^{-y}$. Moreover, numbers $\gamma_0$, $v=v_0$ and $a=a_0$ are defined implicitly by
\begin{align}
&\gamma_0 - \log(\gamma_0)=1+\log(Q_0),\\
&y=\frac{\gamma_0\cdot x}{v} + \log(v)-\gamma_0,\\
&a=\frac{v}{\gamma_0}.
\end{align}
In what follows points $z_\pm$ are such that $2z=z_+ + z_-$, and $v_\pm, a_\pm$ are defined as above for these points.
Our ``larger'' function is defined as:
$$
B_0(x,y)=x\cdot\log(v)+\frac{x-v}{\gamma_0}.
$$
Furthermore,
\begin{align*}
&\Gamma_Q=\{z\colon [z]=Q\},\\
&\Gamma_{Q_0}=\{z\colon [z]=Q_0\},\\
&\Gamma=\Gamma_1=\{z \colon [z]=1\}.
\end{align*}
We sometimes refer to $\Gamma_Q$ as to a $Q$-boundary and to $\Gamma_{Q_0}$ as to a $Q_0$-boundary.
We start with the following easy lemma.
\begin{lemma}
Suppose $F(x,y,x_+, y_+, x_-, y_-)=2B_0(x,y)-B_0(x_-, y_-) - B_0(x_+, y_+)$. If $F(x, y, x_+, y_+, x_-, y_-)\geqslant 0$ then for every number $C$, $C>0$, the following holds: $F(Cx, y+\log(C), Cx_+, y_++\log(C), Cx_-, y_- + \log(C))\geqslant 0$.
\end{lemma}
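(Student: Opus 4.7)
The statement is a homogeneity/scaling property of $B_0$, so the natural plan is to track how the implicit parameter $v$ and the function $B_0$ behave under the transformation $(x,y)\mapsto(Cx,y+\log C)$, and then observe that the ``correction term'' that appears cancels thanks to $2z=z_++z_-$.

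First I would check that $v$ is covariant. Recall $v=v_0(x,y)$ is defined implicitly by
\[
y=\frac{\gamma_0 x}{v}+\log v-\gamma_0,
\]
with $\gamma_0$ a constant depending only on $Q_0$. Substituting $x\mapsto Cx$ and $y\mapsto y+\log C$, one checks by direct inspection that $v\mapsto Cv$ solves the new equation; since the equation determines $v$ uniquely (given the branch), this is the correct value. Consequently
\[
B_0(Cx,y+\log C)=Cx\log(Cv)+\frac{Cx-Cv}{\gamma_0}=C\,B_0(x,y)+Cx\log C.
\]

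Next I would apply this identity to each of the three arguments of $F$. Writing $F_C$ for $F$ evaluated at the scaled point,
\[
F_C=2\bigl(CB_0(x,y)+Cx\log C\bigr)-\bigl(CB_0(x_+,y_+)+Cx_+\log C\bigr)-\bigl(CB_0(x_-,y_-)+Cx_-\log C\bigr).
\]
The $B_0$-terms collect to $C\cdot F(x,y,x_+,y_+,x_-,y_-)$, while the remaining terms contribute $C\log C\,(2x-x_+-x_-)$. Using the standing hypothesis $2z=z_++z_-$ (which gives $2x=x_++x_-$), this latter contribution vanishes, so $F_C=C\cdot F(x,y,x_+,y_+,x_-,y_-)$. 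Since $C>0$, nonnegativity of $F$ is preserved.

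The only step that requires any care is the uniqueness of $v$ in the implicit equation, but this was already addressed when $B_0$ was introduced (one picks the branch $v\leqslant x$). It is also worth noting for consistency that the quantity $[z]=xe^{-y}$ is invariant under the scaling, so the scaled points remain inside the same domain $\Omega_Q$ (and the same $\Omega_{Q_0}$) as the original ones; thus the formula for $B_0$ is legitimately applicable on both sides of the identity.
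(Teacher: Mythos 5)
Your proof is correct and follows essentially the same route as the paper, which simply invokes the homogeneity identity $B_0(Cx, y+\log C) = Cx\log C + C\,B_0(x,y)$; you additionally spell out the verification that $v\mapsto Cv$ under the scaling and that the extra $Cx\log C$ terms cancel thanks to $2x=x_++x_-$, which is the standing hypothesis in that section.
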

\begin{proof}
This lemma follows immediately from the homogeneity of $B_0$, namely, $B_0(Cx, y+\log(C)) = Cx\log(C)+CB_0(x,y)$.
\end{proof}
This lemma allows us to choose $C=\frac{1}{x}$ and always think that $x=1$.

We first start with positions of $z, z_\pm$ that are supposed to be ``worst'' in some sense. Later we shall see that in fact the next section is not needed at all. However, for the sake of completeness we keep it.

\paragraph{Remark about notation}

Abusing notation, we always denote by $\Delta$ the following expression:
$$
\Delta=2B_0(z)-B_0(z_+)-B_0(z_-).
$$
However, in different sections the same letter $\Delta$ will depend (and be differentiated) on different variables. We will always specify on which variables it depends.

\subsubsection{Proof of the Theorem \ref{largerB}. First step}
We start our investigation from the case when $z_\pm, z$ are on the boundary of $\Omega_Q$.
Since $z, z_\pm$ are going to be fixed, $\Delta$ will depend only on $Q_0$.

Our first case is when two of them are on $\Gamma_Q$ and the third is on $\Gamma$. Second case is when two of them are on $\Gamma$ and the third is on $\Gamma_Q$. Moreover $z\in \Gamma_Q$ always.

\paragraph{$z_- \in \Gamma_Q$ and $z_+\in \Gamma$.}\label{bound1}
We have $z=(1, \log\frac{1}{Q})$.

We denote $z_+ = (1+r, \log(1+r))$ and $z_- = (1-r, \log\frac{1-r}{Q})$, $r\geqslant 0$.
Then, since $2y=y_+ + y_-$, we obtain, $2\log\frac{1}{Q}=\log \frac{1-r^2}{Q}$, so $r^2 = 1-\frac{1}{Q}$, and thus $r=\sqrt{1-\frac{1}{Q}}$.
Then we have:
$$
z_- = (1-r, \log\frac{1-r}{Q}), \; \; z=(1, \log\frac{1}{Q}), \; \; z_+ = (1+r, \log(1+r)).
$$
%
We prove the following theorem.
\begin{theorem}
Take $Q_0=Q$. Then we get $\gamma_0=\gamma$, $B_0=B$, and $v$, associated to $Q$.
Denote
$$
\Delta=\Delta(Q)=2B(z)-B(z_-)-B(z_+).
$$
Then $\Delta\geqslant 0$.
\end{theorem}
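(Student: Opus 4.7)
The plan is to reduce $\Delta$ to a completely explicit function of $r$ and $\gamma$, then exploit the implicit constraint together with a couple of elementary one-variable inequalities. First I would evaluate $B$ at each of the three points by using the boundary structure of the definition. Since $z_+\in\Gamma$ and on $\Gamma$ the construction gives $v=x$, we get $B(z_+)=(1+r)\log(1+r)$. Since $z,z_-\in \Gamma_Q$, the ``kissing'' tangent at a boundary point touches $\Gamma_Q$ at the point itself, so $a=x$ and $v=\gamma x$; using $\log\gamma=\gamma-1-\log Q$ one finds $B(z)=\log\gamma+(1-\gamma)/\gamma$ and $B(z_-)=(1-r)B(z)+(1-r)\log(1-r)$.

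Because $2-(1-r)=1+r$, the expression collapses to
$$\Delta=(1+r)B(z)-(1-r)\log(1-r)-(1+r)\log(1+r).$$
The midpoint identity $2y=y_++y_-$ forces $\log Q=-\log(1-r^2)$, i.e.\ $r^2=1-1/Q$. Substituting $\log\gamma=\gamma-1+\log(1-r)+\log(1+r)$ inside $B(z)$, the terms $\pm(1+r)\log(1+r)$ cancel and a short algebraic manipulation yields the clean expression
$$\Delta \;=\; (1+r)\,\frac{(\gamma-1)^2}{\gamma}\;+\;2r\log(1-r).$$
Once this identity is in hand, the rest should be mechanical.

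To finish I would use two ingredients. First, the elementary inequality $\log(1-r)\geq -r/(1-r)$ on $[0,1)$, which follows because the difference vanishes at $r=0$ and has derivative $r/(1-r)^2\geq 0$. This gives
$$\Delta \;\geq\; \frac{1}{1-r}\Bigl[(1-r^2)\frac{(\gamma-1)^2}{\gamma}-2r^2\Bigr].$$
Second, the defining equation $\gamma-\log\gamma=1+\log Q$ is equivalent to $1-r^2=\gamma e^{1-\gamma}$ (and so $r^2=1-\gamma e^{1-\gamma}$); plugging this in collapses the bracket to $e^{1-\gamma}(\gamma^2+1)-2$. Since $\frac{d}{d\gamma}[e^{1-\gamma}(\gamma^2+1)]=-e^{1-\gamma}(\gamma-1)^2\leq 0$ and the function equals $2$ at $\gamma=1$, it is $\geq 2$ for all $\gamma\leq 1$, which is exactly the range of the smaller root. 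This proves $\Delta\geq 0$ with equality only in the degenerate case $r=0$, $\gamma=1$, i.e.\ $Q=1$.

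The main obstacle is not any single step in isolation but the bookkeeping required to consolidate the three ingredients—the explicit formula for $B$, the implicit definition of $\gamma$, and the midpoint constraint linking $r$ and $Q$—into the single identity $\Delta=(1+r)(\gamma-1)^2/\gamma+2r\log(1-r)$. Once that form is reached, the reduction $1-r^2=\gamma e^{1-\gamma}$ is what makes the final inequality fall out of one monotonicity check in $\gamma$; without passing to that variable, attempting to verify $\Delta\geq 0$ directly in $r$ requires differentiating twice (both $\Delta(0)$ and $\Delta'(0)$ vanish), and the sign of $\Delta''$ is far less transparent than the sign of $e^{1-\gamma}(\gamma^2+1)-2$.
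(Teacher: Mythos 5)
Your proof is correct, and it follows a genuinely different (and cleaner) route from the paper once you reach the common intermediate formula $\Delta = (1+r)\frac{(\gamma-1)^2}{\gamma} + 2r\log(1-r)$, which the paper also derives in the equivalent form $(1+r)(\gamma + \frac{1}{\gamma} - 2) + 2r\log(1-r)$. At that point the paper treats $\Delta$ as a function $f(Q)$, checks $f(1)=0$, and pushes through a fairly intricate chain of estimates to show $f'(Q) \geq 0$; this requires implicit differentiation of both $\gamma$ and $r$ in $Q$, a couple of rounds of discarding positive terms, and even a second nested derivative argument (the auxiliary function $g(Q) = r/\gamma + \log(1-r)$, whose nonnegativity is again shown by a sign-of-derivative computation). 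Your approach sidesteps the calculus-in-$Q$ entirely: you replace $\log(1-r)$ by its convexity lower bound $-r/(1-r)$, then use the exact algebraic relation $1-r^2 = \gamma e^{1-\gamma}$ (an equivalent rewriting of the defining equation for $\gamma$) to eliminate $r$ and reduce the claim to $e^{1-\gamma}(\gamma^2+1) \geq 2$ for $\gamma \leq 1$, which falls out of one trivial monotonicity check since $\frac{d}{d\gamma}\bigl[e^{1-\gamma}(\gamma^2+1)\bigr] = -e^{1-\gamma}(\gamma-1)^2 \leq 0$. What the paper's route buys is a certain transparency about how $\Delta$ behaves as $Q$ varies, which is in the spirit of Lemma 4.15 that immediately follows; what yours buys is brevity and the avoidance of the implicit-function derivatives $\gamma'(Q)$, $r'(Q)$ and the nested auxiliary estimate, replacing them by a single one-variable inequality in $\gamma$. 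Both are sound; yours is the more economical argument for the isolated statement.
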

We notice that now $\Delta$ depends on $Q$, and $Q$ is a variable, that is bigger than $1$.

This theorem, together with next lemma, gives us what we want.
\begin{lemma}\label{enl1}
For fixed points $z, z_\pm \in \Omega_Q$, $\Delta(Q_0)=2B_0(z)-B_0(z_-)-B_0(z_+)$ is an increasing function with respect to $Q_0$ on the set $\{Q_0\colon Q_0\geqslant Q\}$.
\end{lemma}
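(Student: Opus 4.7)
The plan is to compute $\partial \Delta/\partial Q_0$ explicitly and reduce the monotonicity to an algebraic identity special to the triple $z = (1, \log(1/Q))$, $z_+ = (1+r, \log(1+r))$, $z_- = (1-r, \log((1-r)/Q))$ from the preceding theorem. First I would differentiate $B_0(x,y) = x\log v + (x-v)/\gamma_0$ with respect to $Q_0$ at fixed $(x,y)$ by applying the chain rule through $v(x,y;\gamma_0)$ and $\gamma_0(Q_0)$. Implicitly differentiating $y = \gamma_0 x/v + \log v - \gamma_0$ in $\gamma_0$ gives $\partial_{\gamma_0} v = v(x-v)/(\gamma_0 x - v)$, and $\gamma_0 - \log\gamma_0 = 1 + \log Q_0$ gives $d\gamma_0/dQ_0 = \gamma_0/(Q_0(\gamma_0 - 1))$. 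After the factor $\gamma_0 x - v$ cancels, one arrives at
$$\left.\frac{\partial B_0}{\partial Q_0}\right|_{(x,y)} = \frac{x-v}{\gamma_0 Q_0}.$$

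The constraint $2z = z_+ + z_-$ kills the contribution of the $x$-terms, so
$$\frac{\partial \Delta}{\partial Q_0} = \frac{v_+ + v_- - 2v}{\gamma_0 Q_0},$$
and the lemma will follow once $v_+ + v_- - 2v \geq 0$ is established for our triple. The key observation is a scaling invariance of the defining equation: the substitution $(x,y) \mapsto (cx, y + \log c)$ with $c>0$ preserves $y = \gamma_0 x/v + \log v - \gamma_0$ provided $v \mapsto cv$. Taking $c = 1-r$ with base point $z$, we see $z_-$ is exactly the dilate of $z$, hence $v(z_-) = (1-r)v(z)$. Since $z_+ \in \Gamma$ (because $[z_+] = 1$), the tangent-line construction gives $v(z_+) = x_+ = 1+r$ directly. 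Combining these,
$$v_+ + v_- - 2v = (1+r) + (1-r)v - 2v = (1+r)(1-v).$$

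Finally I would verify $v(z) < 1$. Plugging $v=1$ into $\gamma_0/v + \log v = \gamma_0 - \log Q$ would force $\log Q = 0$, contradicting $Q>1$; a short monotonicity argument on $v \mapsto \gamma_0/v + \log v$ (strictly increasing on $[\gamma_0,\infty)$) localizes the relevant root to $v \in (\gamma_0, 1)$ for every $Q_0 \geq Q > 1$. Hence $(1+r)(1-v) > 0$ and $\partial \Delta/\partial Q_0 > 0$, which yields the claimed (in fact strict) monotonicity. The main obstacle is spotting the scaling observation: a naive attempt via convexity of $v(x,y)$ is doomed, since a direct Hessian computation gives $v_{xx} = -\gamma_0^2 v^2/(v - \gamma_0 x)^3 < 0$, so $v$ is actually concave in $x$, and the inequality $v_+ + v_- \geq 2v$ can fail for general midpoint triples in $\Omega_Q$. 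One genuinely uses the special geometric structure of the theorem's configuration ($z, z_-$ on $\Gamma_Q$, $z_+$ on $\Gamma$) and the homogeneity it exposes.
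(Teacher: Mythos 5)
Your proof is correct and rests on the same computation as the paper's: implicit differentiation of $v$ through $\gamma_0(Q_0)$ together with the configuration identities $v_- = (1-r)v$, $v_+ = 1+r$, and the final reduction to $v<1$. The organization differs in a way that is arguably cleaner: you first establish the pointwise formula $\partial B_0/\partial Q_0 = (x-v)/(\gamma_0 Q_0)$ valid at every $(x,y)$, which makes the midpoint cancellation $2x - x_+ - x_- = 0$ do the work and collapses $\partial\Delta/\partial Q_0$ to $(v_+ + v_- - 2v)/(\gamma_0 Q_0)$ at once; the paper instead substitutes the explicit configuration into $\Delta$ before differentiating, isolates the scalar function $f(Q_0) = \log v + (1-v)/\gamma_0$, and differentiates that. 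Both routes require exactly the same chain-rule bookkeeping, and the paper's final expression $f'(Q_0) = \frac{1-v}{(1-\gamma_0)Q_0}\bigl(\frac{1}{\gamma_0}-1\bigr)$ simplifies to your $\frac{1-v}{\gamma_0 Q_0}$, so the two are literally the same after algebra. Your derivation of $v_- = (1-r)v$ via the dilation symmetry $(x,y)\mapsto (cx, y+\log c)$ is a nicer justification than the paper's (which just states it after writing the two implicit equations), and your verification that $v\in(\gamma_0,1)$ fills in a step the paper leaves implicit. The closing remark about why a naive concavity-of-$v$ argument fails is correct but not needed for the proof.

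One small caution on presentation: when you write $\partial\Delta/\partial Q_0 = (v_+ + v_- - 2v)/(\gamma_0 Q_0)$ you have absorbed a sign, since $\partial B_0/\partial Q_0 = (x-v)/(\gamma_0 Q_0)$ gives $2(1-v) - (x_+-v_+) - (x_--v_-) = -2v + v_+ + v_-$; your displayed formula is right, just make sure the minus signs are tracked in the write-up. Also note that strict positivity of $(1+r)(1-v)$ requires $Q>1$ (so $r>0$ and $v<1$), which you do address; at $Q=1$ the configuration degenerates and $\Delta\equiv 0$, consistent with the weak inequality claimed in the lemma.
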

The second lemma shows that if our initial $B$ was ``concave'' enough, then the ``enlarged'' $B_0$ is also ``concave'' enough.
\begin{proof}[Proof of the Lemma \ref{enl1}]
By definition,
$$
z_- = (1-r, \log\frac{1-r}{Q}), \; \; z=(1, \log\frac{1}{Q}), \; \; z_+ = (1+r, \log(1+r)).
$$
We have points $v, v_\pm\in \Gamma$, associated with $z, z_\pm$ and calculated in the enlarged domain. Namely,
\begin{align}
&\gamma_0 - \log(\gamma_0)=1+\log(Q_0)\\
&\log\frac{1}{Q}=\frac{\gamma_0}{v} + \log(v)-\gamma_0\\
&\log\frac{1-r}{Q}=\frac{\gamma_0 (1-r)}{v_-} + \log(v_-)-\gamma_0,\\
&v_+ = 1+r.
\end{align}
In particular we see that $v_- = (1-r)v$.
Since
$$
B_0(z) = x\log(v) + \frac{x-v}{\gamma_0},
$$
and since $2x-x_+-x_- = 0$, one gets
\begin{multline}
\Delta(Q_0) = 2\log v - (1-r)\log(v_-) - (1+r)\log(v_+) - \frac{1}{\gamma_0}(2v-v_--v_+) =\\= 2\log v - (1-r)\log(v)-(1-r)\log(1-r) - (1+r)\log(1+r) - \frac{1}{\gamma_0}(2v-(1-r)v - (1+r)) = \\= (1+r)(\log(v) - \frac{v-1}{\gamma_0}) - (1-r)\log(1-r) - (1+r)\log(1+r).
\end{multline}
Last two terms do not depend on $Q_0$ at all, so we consider only
$$
f(Q_0) = \log(v) - \frac{v-1}{\gamma_0}.
$$
We clearly have
$$
\gamma_0^{\prime} - \frac{\gamma_0^{\prime}}{\gamma_0} = \frac{1}{Q_0},
$$
so
$$
\gamma_0^{\prime} = \frac{\gamma_0}{(\gamma_0-1)Q_0}.
$$
Differentiating the equality
$$
\log\frac{1}{Q}=\frac{\gamma_0}{v} + \log(v)-\gamma_0
$$
with respect to $Q_0$, we get
$$
0 = \frac{\gamma_0^{\prime}}{v} - \frac{\gamma_0}{v^{2}}v^{\prime} + \frac{v^{\prime}}{v} - \gamma_0^{\prime},
$$
so
$$
0 = \frac{v^{\prime}}{v} \left(1-\frac{\gamma_0}{v}\right) - \gamma_0^{\prime}\left(1-\frac{1}{v}\right),
$$
$$
\frac{v^{\prime}}{v} \frac{v-\gamma_0}{v} = \frac{v-1}{v}\frac{\gamma_0}{(\gamma_0-1)Q_0},
$$
$$
\frac{v^{\prime}}{v} = \frac{1-v}{v-\gamma_0}\frac{1}{1-\gamma_0}\frac{\gamma_0}{Q_0}.
$$
Now let us differentiate $f(Q_0)$. We remind that
$$
f(Q_0) = \log(v) - \frac{v-1}{\gamma_0} = \log(v) + \frac{1-v}{\gamma_0},
$$
so
\begin{multline}
f^{\prime}(Q_0) = \frac{v^{\prime}}{v} + \frac{-v^{\prime}\gamma_0 - \gamma_0^{\prime}(1-v)}{\gamma_0^2} = \\
 = \frac{1-v}{v-\gamma_0}\frac{1}{1-\gamma_0}\frac{\gamma_0}{Q_0} -\frac{v^{\prime}}{\gamma_0} - \frac{\gamma_0 (1-v)}{(\gamma_0-1)Q_0}\frac{1}{\gamma_0^2} = \frac{1-v}{v-\gamma_0}\frac{1}{1-\gamma_0}\frac{\gamma_0}{Q_0} - \frac{1-v}{v-\gamma_0}\frac{1}{1-\gamma_0}\frac{v}{Q_0} + \frac{1-v}{1-\gamma_0}\frac{1}{Q_0 \gamma_0} = \\
  = \frac{1-v}{(1-\gamma_0)Q_0} \left(\frac{\gamma_0}{v-\gamma_0} - \frac{v}{v-\gamma_0} + \frac{1}{\gamma_0}\right) =
  \frac{1-v}{(1-\gamma_0)Q_0} \left(\frac{1}{\gamma_0}-1\right) \geqslant 0,
\end{multline}
since $v<1$, and $\gamma_0 < 1$.

This finishes the proof.
\end{proof}

\begin{proof}[Proof of the theorem]
We go back to $Q$, $\gamma$, $B$, and $v$, calculated for $\gamma$. We remind that in the statement of the theorem $Q_0=Q$.

Recall that
$$
z_- = (1-r, \log\frac{1-r}{Q}), \; \; z=(1, \log\frac{1}{Q}), \; \; z_+ = (1+r, \log(1+r)).
$$
Our $v, v_\pm$ can be written explicitly in terms of $\gamma$.
Indeed,
\begin{align}
&v_- = \gamma(1-r)\\
&v = \gamma \\
& v_+=1+r.
\end{align}
Then
\begin{multline}
\Delta=2B(z)-B(z_-)-B(z_+)=2(\log(\gamma) + \frac{1-\gamma}{\gamma}) - ((1-r)\log(\gamma(1-r)) + \frac{1-r-(1-r)\gamma}{\gamma}) - (1+r)\log(1+r) = \\
 = 2\log(\gamma) + \frac{2}{\gamma} - 2 - (1-r)\log(\gamma) - (1-r)\log(1-r) - \frac{1-r}{\gamma} + (1-r)-(1+r)\log(1+r) = \\
 = (1+r)\log(\gamma) + \frac{1+r}{\gamma}-(1+r) - (1-r)\log(1-r) - (1+r)\log(1+r).
\end{multline}
We notice that $1-r^2 = \frac{1}{Q}$, so $\log(1+r) = \log\frac{1}{Q} - \log(1-r)$. Therefore,
\begin{multline}
\Delta = (1+r)\left(\log(\gamma) + \frac{1}{\gamma}-1 - \log\frac{1}{Q}\right) - (1-r)\log(1-r)+(1+r)\log(1-r) = \\
=(1+r)\left(\log(\gamma)+\log(Q) + \frac{1}{\gamma}-1\right) + 2r\log(1-r) = (1+r)\left(\gamma + \frac{1}{\gamma} - 2\right) + 2r\log(1-r).
\end{multline}

We would like to say that $\Delta\geqslant 0$. Surprisingly, we can do it. Here is the chain of awful estimates.
We denote
$$
f(Q)=\Delta.
$$
Notice that
$$
\gamma^{\prime} (Q)=\frac{\gamma}{Q(\gamma-1)} \; \; \; r^{'}(Q)=\frac{1}{2rQ^2}.
$$
The last one is true since $r^2-1 = -\frac{1}{Q}$.

We notice that if $Q=1$ then $r=0$ and $\gamma=1$, so $f(1)=0$. We claim that $f^{\prime}(Q)\geqslant 0$, which will give the desired result.

We have
$$
f^{\prime}(Q)=(\gamma+\frac{1}{\gamma}-2)\frac{1}{2rQ^2} + (1+r)(1-\frac{1}{\gamma^2}) \frac{\gamma}{Q(\gamma-1)} + (2\log(1-r)-\frac{2r}{1-r})\frac{1}{2rQ^2}.
$$
We notice that $\gamma+\frac{1}{\gamma}-2\geqslant 0$ and we throw it away. Therefore,
$$
f^{\prime}(Q)\geqslant (1+r)(1-\frac{1}{\gamma^2}) \frac{\gamma}{Q(\gamma-1)} + (2\log(1-r)-\frac{2r}{1-r})\frac{1}{2rQ^2} = \frac{1+r}{Q}\frac{\gamma+1}{\gamma}+\frac{\log(1-r)}{rQ^2} - \frac{1}{Q^2}\frac{1}{1-r}.
$$
We now use that
$$
\frac{1}{1-r}=\frac{1+r}{1-r^2}=Q(1+r),
$$
thus
\begin{multline}
f^{\prime}(Q)\geqslant \frac{1}{Q}\left[ 1+r + \frac{1+r}{\gamma}+\frac{\log(1-r)}{rQ} - \frac{Q(1+r)}{Q}\right] =\\= \frac{1}{Q}\left[1+r + \frac{1}{Q\gamma(1-r)} + \frac{\log(1-r)}{rQ}-(1+r)\right] = \\=\frac{1}{Q^2r}\left[\frac{r}{\gamma(1-r)} + \log(1-r)\right].
\end{multline}
Finally, $0\leqslant 1-r <1$, so $\frac{r}{\gamma(1-r)}>\frac{r}{\gamma}$, and therefore
$$
f^{\prime}(Q)\geqslant \frac{1}{Qr^2}\left[ \frac{r}{\gamma}+\log(1-r)\right].
$$
We now denote
$$
g(Q)=\frac{r}{\gamma} + \log(1-r).
$$
Again, $g(1)=0$. We are going to prove that $g^{\prime}(Q)\geqslant 0$. Indeed,
$$
g^{\prime}(Q)=\frac{1}{\gamma}\frac{1}{2rQ^2}-\frac{r}{\gamma^2}\frac{\gamma}{Q(\gamma-1)} - \frac{1}{1-r}\frac{1}{2rQ^2} = \frac{1}{2rQ^2}\left[ \frac{1}{\gamma}-\frac{1}{1-r} - \frac{2r^2 Q}{\gamma(\gamma-1)}\right].
$$
But $r^2Q = (1-\frac{1}{Q})Q = Q-1$, which implies
\begin{multline}
g^{\prime}(Q) = \frac{1}{2rQ^2} \left[ \frac{1}{\gamma}-\frac{1}{1-r} - (Q-1)\frac{2}{\gamma(\gamma-1)}\right] = \\=\frac{1}{2rQ^2} \left[ \frac{1}{\gamma}-\frac{1}{1-r} - 2(Q-1)(\frac{1}{\gamma-1} - \frac{1}{\gamma})\right] =\\= \frac{1}{2rQ^2} \left[\frac{1}{\gamma} - \frac{1}{1-r} - 2\frac{Q-1}{\gamma-1} + \frac{2Q}{\gamma} - \frac{2}{\gamma}\right]
\end{multline}
Again $\frac{1}{1-r}=Q(1+r)$, so
$$
2rQ^2 \cdot g^{\prime}(Q) = -\frac{1}{\gamma} + 2\frac{Q-1}{1-\gamma}+\frac{2Q}{\gamma}-Q(1+r) = \frac{Q-1}{\gamma} + 2\frac{Q-1}{1-\gamma} + Q(\frac{1}{\gamma}-r-1).
$$
First two terms are clearly non negative. To check that the last one is non negative we do the following. $\gamma$ satisfies the equation $\vf(t)-\log(Q)=1$, where $\vf(t)=t-\log(t)$. $\vf$ is a decreasing function if $t\in (0,1]$. So if we prove that $\vf(\frac{1}{1+r})-\log(Q)\leqslant 1$ then we get that $\frac{1}{1+r}\geqslant \gamma$, which means that $\frac{1}{\gamma}\geqslant 1+r$. Thus,
$$
\vf(\frac{1}{1+r})-\log(Q) = \frac{1}{1+r}+\log(1+r)-\log(Q);
$$
the derivative of this expression is
$$
\left(\frac{1}{1+r}-\frac{1}{(1+r)^2}\right)\frac{1}{2rQ^2} - \frac{1}{Q} = \frac{1}{Q}\left[\frac{r}{(1+r)^2}\frac{1}{2rQ^2}-1\right] = \frac{1}{Q}\left[\frac{1}{2Q^2(1+r)^2} - 1\right].
$$
Since $Q>1$, $r>0$, we have $2Q^2(1+r)^2>2$, so the derivative is negative, and therefore
$$
\vf(\frac{1}{1+r})-\log(Q)\leqslant  
 \vf(1)=1.
$$
This completes the proof.
\end{proof}
\paragraph{$z_-\in \Gamma$, $z_+\in \Gamma_Q$.}\label{bound2}

In this case we still have $r=\sqrt{1-\frac{1}{Q}}$, but
$$
z_- = (1-r, \log(1-r)), \; \; z=(1, \log\frac{1}{Q}), \; \; z_+=(1+r, \log\frac{1+r}{Q}),
$$
and
\begin{align}
&v_- = 1-r, \\
&v = \gamma\\
&v_+ = \gamma(1+r).
\end{align}
So,
\begin{multline}
\Delta(Q) = 2\log(\gamma) + \frac{2}{\gamma}-2 -(1-r)\log(1-r) - ((1+r)\log(\gamma(1+r)) + \frac{1+r-(1+r)\gamma}{\gamma}) = \\=2\log(\gamma)+\frac{2}{\gamma}-2-(1-r)\log(1-r) - (1+r)\log(\gamma)-(1+r)\log(1+r) - \frac{1+r}{\gamma} + (1+r) =\\= (1-r)\log(\gamma) + \frac{1-r}{\gamma} -(1-r) - (1-r)\log(1-r)-(1+r)\log(1+r)
\end{multline}
Unfortunately, this expression is negative. To prove it one can take very large $Q$ and write the asymptotic of everything. We have no intention to do it. However, curious reader can draw the graph of $\Delta(Q)$ in, say, Maple, and see that the function is negative. We now fix our choice of $Q_0$.
\begin{defin}
We define $Q_0$ as a solution of $\Delta(Q_0)=0$, such that $Q_0\geqslant Q$.
\end{defin}
We notice that this choice of $Q_0$ is as written in the Theorem \ref{largerB}.

This definition leaves two questions: if such a $Q_0$ exists and, more complicated, if there is a uniform estimate $Q_0 \leqslant C \cdot Q$, where $C$ doesn't depend on $Q$. Fortunately, the answer is ``yes'' to both questions. We prove the following lemma.

\begin{lemma}
If $z, z_\pm$ as above, then for every point $(u,v)\in [z_-, z_+]$ the following holds: $ue^{-v}\leqslant CQ$, where $C$ is some uniform constant.
\end{lemma}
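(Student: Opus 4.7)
The plan is to parametrize the segment and reduce the claim to a one-variable inequality, then handle the latter by elementary calculus. For $t\in[0,1]$ set $(u(t),v(t)) = (1-t)z_- + t z_+$. From the explicit formulas
\[
z_-=(1-r,\log(1-r)), \qquad z_+=\Bigl(1+r,\log\tfrac{1+r}{Q}\Bigr), \qquad r=\sqrt{1-\tfrac{1}{Q}},
\]
one reads off $u(t) = 1 + r(2t-1)$ and $v(t) = (1-t)\log(1-r) + t\log\tfrac{1+r}{Q}$, and therefore
\[
u(t)\,e^{-v(t)} \;=\; \bigl(1+r(2t-1)\bigr)\,(1-r)^{-(1-t)}\,(1+r)^{-t}\,Q^{t}.
\]

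The key algebraic step is to use the identity $(1-r)(1+r)=1-r^{2}=\tfrac{1}{Q}$, or equivalently $(1-r)^{-1}=Q(1+r)$. Raising to the power $1-t$ gives $(1-r)^{-(1-t)} = Q^{1-t}(1+r)^{1-t}$, and substituting back, the two factors of $Q$ collapse into a single one:
\[
u(t)\,e^{-v(t)} \;=\; Q \cdot \frac{1+r(2t-1)}{(1+r)^{2t-1}}.
\]
Writing $s=2t-1\in[-1,1]$, the lemma then reduces to the dimensionless claim that
\[
F(r,s)\;:=\;\frac{1+rs}{(1+r)^{s}}
\]
is bounded by a universal constant as $(r,s)$ ranges over $[0,1)\times[-1,1]$. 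This reduction also makes transparent why the values $[z_\pm]\leqslant Q$ propagate to the whole segment only up to a multiplicative constant: the overshoot comes from the bounded ratio $F(r,s)$ and not from $Q$.

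The remaining one-variable estimate is routine. Differentiating in $r$ yields $\partial_{r}F(r,s) = s(1-s)\,r\,(1+r)^{-s-1}$. For $s\in[0,1]$ this is non-negative, so $F(r,s)\leqslant F(1,s)=(1+s)\,2^{-s}$, whose supremum over $[0,1]$ is a finite constant (easily bounded by $2$, with sharp value $2/(e\log 2)\approx 1.062$ at $s_\star = 1/\log 2 - 1$). For $s\in[-1,0]$ the derivative is non-positive, so $F(r,s)\leqslant F(0,s)=1$. Combining the two cases, $F(r,s)\leqslant 2$ uniformly, so the lemma holds with $C=2$.

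There is no serious obstacle in this argument; the only nontrivial ingredient is the use of $(1-r)(1+r)=1/Q$ to collapse the expression for $ue^{-v}$ into $Q$ times a dimensionless bounded factor. Everything else is bookkeeping and a one-variable maximization.
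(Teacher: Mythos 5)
Your proof is correct and is genuinely different from (and cleaner than) the paper's. The paper parametrizes the segment, finds the interior critical point $t_\star$ of $\varphi(t)=u(t)e^{-v(t)}$, plugs in the explicit coordinates, and then resorts to an asymptotic estimate for large $Q$ (``$r\sim 1-\tfrac{1}{2Q}$, so $\varphi(t_\star)\asymp Q$''); the constant is never made explicit and the argument is openly informal. Your observation that $(1-r)(1+r)=1/Q$ allows $Q$ to be factored out \emph{exactly}, reducing the lemma to bounding the dimensionless function $F(r,s)=(1+rs)(1+r)^{-s}$ on $[0,1)\times[-1,1]$, is the genuine improvement: no asymptotics are needed, the one-variable calculus is elementary, and you obtain the explicit bound $C=2$ (in fact $C=2/(e\log 2)\approx 1.06$). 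The derivative $\partial_r F=sr(1-s)(1+r)^{-s-1}$ and the endpoint checks $F(\cdot,\pm1)=1$ or $1/Q$ confirm the formula. This buys a sharper and self-contained version of the lemma, at the price of one algebraic observation that the paper overlooks. A minor stylistic point: since $r\in[0,1)$, you should pass to $\sup_{r<1}F(r,s)=F(1,s)$ by continuity and monotonicity rather than evaluating at $r=1$ directly, but this is cosmetic.
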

This lemma, indeed, shows that if we take $Q_0 = CQ$ then the function $B_{Q_0}$ will be locally concave in the domain $\Omega_{Q_0}$, and the line segment $[z_-, z_+]$ lies in this domain. Since $\Delta(Q)\leqslant 0$ and $\Delta(CQ)\geqslant 0$, we immediately get that between $Q$ and $CQ$ there is some $Q_0$ for which $\Delta(Q_0)=0$.
\begin{proof}[Proof of the Lemma]
The segment $[z_-, z_+]$ has a parametrization $u(t)=tx_+ (1-t)x_-$, $v(t)=ty_+ + (1-t)y_-$. Then
$$
\vf(t)=u(t)\exp(-v(t)) = (t(x_+-x_-) + x_-)\exp(-t(y_+ - y_-) - y_-).
$$
We would like to prove that $\vf(t)\leqslant CQ, \; t\in [0,1]$. We have
first of all, $\vf(0)=1$, $\vf(1)=Q$, so we need to check local extrema.
$$
\vf^{\prime}(t) = (x_+ - x_-)\exp(\ldots) - (y_+ - y_-) (t(x_+ - x_-)+x_-) \exp(\ldots).
$$
If
$$
\vf^{\prime}(t_*)=0
$$
then
$$
\frac{x_+ - x_-}{y_+ - y_-} =x_- + t_* (x_+ - x_-),
$$
so
$$
t_* = \frac{1}{y_+ - y_-} - \frac{x_-}{(x_+ - x_-)},
$$
or
$$
t_*(y_+ - y_-) = 1 - (y_+ - y_-)\frac{x_-}{x_+ - x_-}.
$$
Therefore,
$$
\vf(t_*) = \frac{x_+ - x_-}{y_+ - y_-} \exp\left((y_+ - y_-)\frac{x_-}{x_+ - x_-} - 1 - y_-\right).
$$
We now plug our $x_\pm$ and $y_\pm$. First of all,
$$
x_\pm = 1\pm r,
$$
so $x_+ - x_- = 2r$. Also $y_+ - y_- = \log\frac{1+r}{Q} - \log(1-r) = \log\frac{1+r}{Q(1-r)}$.
We notice that
$$
Q(1-r) = Q(1-\sqrt{1-\frac{1}{Q}}) = Q-\sqrt{Q^2 - Q} = \frac{Q}{Q+\sqrt{Q^2 - Q}} \asymp 1.
$$
So $y_+ - y_- \asymp 1$. As this proof doesn't involve any deep ideas, we finish is briefly. First of all, we are interested in large $Q$, because for bounded $Q$ we can always find a uniform $C$. So,
$$
r=\sqrt{1-\frac{1}{Q}} \sim 1-\frac{1}{2Q}\sim 1,
$$
and $x_- = 1-r \sim \frac{1}{2Q}$, $y_- \sim \log\frac{1}{2Q}$. So,
$$
\vf(t_*) \asymp 2(1-\frac{1}{2Q})\exp\left(C \cdot \frac{1}{2Q}\frac{1}{2} - 1 - \log\frac{1}{2Q}\right)\asymp 2 \exp(\log(2Q)) \asymp Q.
$$
This finishes our proof.
\end{proof}
\paragraph{$z_\pm \in \Gamma$}\label{bound3}

In this case we change our choice of $r$. We have $z_\pm = (1\pm r, \log(1\pm r))$, $z=(1, \log\frac{1}{Q})$. Since $\log(1-r^2) = 2\log\frac{1}{Q}$, we get $1-r^2 = \frac{1}{Q^2}$, or $r=\sqrt{1-\frac{1}{Q^2}}$.

As in the first case, we prove two propositions.
\begin{lemma}
$\Delta(Q)\geqslant 0$ and for every $Q_0\geqslant Q$ we have $\Delta(Q_0)\geqslant \Delta(Q)$.
\end{lemma}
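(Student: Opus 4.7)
The plan is to parallel the reasoning used for the first boundary case (where $z_-\in\Gamma_Q$, $z_+\in\Gamma$): exploit the geometry to reduce the $Q_0$-dependence of $\Delta$ to a single quantity whose monotonicity is already established in Lemma \ref{enl1}, and then verify non-negativity at $Q_0=Q$ by a derivative chase starting from the base case $Q=1$.

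\textbf{Reduction of the $Q_0$-dependence.} Because $z_\pm=(1\pm r,\log(1\pm r))\in\Gamma$, the equation $\log x_\pm=\gamma_0 x_\pm/v_\pm+\log v_\pm-\gamma_0$ that defines $v_\pm$ admits the solution $v_\pm=x_\pm=1\pm r$ for every $Q_0$, independently of $\gamma_0$. Substituting into the formula for $B_0$ yields
\begin{equation*}
B_0(z_\pm)=(1\pm r)\log(1\pm r)+\frac{(1\pm r)-(1\pm r)}{\gamma_0}=(1\pm r)\log(1\pm r),
\end{equation*}
so the entire $Q_0$-dependence of $\Delta(Q_0)$ is contained in $2B_0(z)=2\log v+2(1-v)/\gamma_0$. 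But the point $z=(1,\log(1/Q))$ here is precisely the $z$ that already appeared in the proof of Lemma \ref{enl1}, where the calculation $f'(Q_0)=\frac{1-v}{(1-\gamma_0)Q_0}\left(\frac{1}{\gamma_0}-1\right)\geqslant 0$ shows that $\log v+(1-v)/\gamma_0$ is non-decreasing in $Q_0$ on $\{Q_0\geqslant Q\}$. This gives the second claim $\Delta(Q_0)\geqslant\Delta(Q)$ for $Q_0\geqslant Q$ for free.

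\textbf{Closed form and positivity at $Q_0=Q$.} It remains to show $\Delta(Q)\geqslant 0$. Setting $Q_0=Q$, the equation $\log(1/Q)=\gamma/v+\log v-\gamma$ is solved by $v=\gamma$, since $\gamma-\log\gamma=1+\log Q$. Hence $B(z)=\log\gamma+(1-\gamma)/\gamma$, and using $\log\gamma=\gamma-1-\log Q$ together with $(1-r)(1+r)=1/Q^2$ (so that $\log(1-r)+\log(1+r)=-2\log Q$) one arrives at the clean formula
\begin{equation*}
\Delta(Q)=2\gamma+\frac{2}{\gamma}-4+r\log\frac{1-r}{1+r}.
\end{equation*}
Set $h(Q):=\Delta(Q)$; direct substitution gives $h(1)=0$ because $r=0$ and $\gamma=1$ at $Q=1$. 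I would then compute $h'(Q)$ using $\gamma'(Q)=\gamma/[(\gamma-1)Q]$ (negative, because $\gamma<1$) and $r'(Q)=1/(rQ^3)$, and show $h'(Q)\geqslant 0$ in the style of the first boundary case: after elementary manipulations the inequality $h'(Q)\geqslant 0$ is expected to reduce to an auxiliary inequality $g(Q)\geqslant 0$ with $g(1)=0$, itself verified by a second differentiation that invokes $\gamma-\log\gamma=1+\log Q$ and $r^2Q^2=Q^2-1$.

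\textbf{Main obstacle.} All the real work lies in the derivative chase of the last step: the two pieces $2(\gamma+1/\gamma-2)\geqslant 0$ and $r\log\frac{1-r}{1+r}\leqslant 0$ carry opposite signs and both blow up as $Q\to\infty$, so positivity cannot be obtained by a crude term-by-term estimate. If the two-stage differentiation becomes unwieldy, a robust fallback is to reparametrize by $\gamma\in(0,1]$ alone via $\log Q=\gamma-\log\gamma-1$ and $r=\sqrt{1-e^{-2(\gamma-\log\gamma-1)}}$, converting the claim into a single-variable inequality on $(0,1]$ that can be verified by standard calculus.
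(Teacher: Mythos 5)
Your plan matches the paper's proof in structure, and the reduction for the second claim is complete and correct: since $z_\pm\in\Gamma$ one has $v_\pm=1\pm r$ for every $Q_0$, so the $Q_0$-dependence of $\Delta(Q_0)$ lives entirely in $B_0(z)=\log v+(1-v)/\gamma_0$, and its monotonicity in $Q_0$ is exactly the content of Lemma \ref{enl1} applied at the point $z=(1,\log(1/Q))$. Your closed form $\Delta(Q)=2\gamma+\frac{2}{\gamma}-4+r\log\frac{1-r}{1+r}$ is equivalent to the one in the paper.

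The gap is that the step carrying all the weight --- proving $\Delta(Q)\geqslant 0$ --- is only sketched, and your forecast of how it will go is not quite right. You anticipate a two-stage derivative chase ``in the style of the first boundary case,'' but here the structure is simpler. With $\gamma'(Q)=\gamma/[Q(\gamma-1)]$ and $r'(Q)=1/(rQ^3)$ (note $r=\sqrt{1-Q^{-2}}$ in this case, not $\sqrt{1-Q^{-1}}$), a single differentiation already produces
$$
\Delta'(Q)=\frac{2}{Q\gamma}+\frac{1}{rQ^{3}}\log\frac{1-r}{1+r}
=\frac{2}{Q}\left[\frac{1}{\gamma}+\frac{\log\bigl(Q-\sqrt{Q^2-1}\bigr)}{Q\sqrt{Q^2-1}}\right],
$$
and the non-negativity of the bracket is what the paper leaves as an ``easy exercise.'' One workable route: show $\tfrac{1}{\gamma}\geqslant 1+r$ exactly as in Paragraph \ref{bound1} (the argument there carries over after replacing $r^2=1-1/Q$ by $r^2=1-1/Q^2$, so that $r'(Q)=1/(rQ^3)$), and then the bracket is $\geqslant (1+r)-\frac{\log Q(1+r)}{Q^2 r}$, which is positive using $\log(1+r)<r$ and $\log Q<Q-1=Q^2r^2/(Q+1)$. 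As written, your proposal names the obstacle honestly but does not cross it; completing the derivative computation (and noticing it only needs one stage, not two) is the missing content.
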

\begin{proof}
We start from the second fact. We always have $v_\pm = 1\pm r$, and so
$$
\Delta(Q_0) = 2\log v + 2\frac{1-v}{\gamma_0} - (1-r)\log(1-r) - (1+r)\log(1+r).
$$
We have already seen that the sum of first two terms increase when $Q_0$ increase, and last two terms do not depend on $Q_0$.

For the first part, notice that when $Q_0=Q$ we have $v=\gamma$, and so
$$
\Delta(Q) = 2\log(\gamma) + 2\frac{1-\gamma}{\gamma} - (1-r)\log(1-r)-(1+r)\log(1+r).
$$
We have
$$
\gamma^{\prime} = \frac{\gamma}{Q(\gamma-1)}, \; \; \; r^{\prime} = \frac{1}{rQ^3}.
$$
The last one is new because $r$ is different from first two cases.
So,
$$
\Delta^{\prime} = \frac{2}{Q\gamma} + \frac{1}{rQ^3}\log\frac{1-r}{1+r} = \frac{2}{\gamma Q} \frac{1}{rQ^3} \log \frac{(1-r)^2}{1-r^2} = \frac{2}{Q}\left[ \frac{1}{\gamma} + \frac{\log(Q-\sqrt{Q^2-1})}{Q\sqrt{Q^2-1}}\right].
$$
We leave the proof that this expression is positive as an easy exercise. Then $\Delta(Q)\geqslant \Delta(1) = 0$, and we are done.
\end{proof}
\subsubsection{Proof of the Theorem \ref{largerB}: change of variables}
\paragraph{Discussion}

We remind the reader that in the general case we basically have four variables: $x_\pm$ and $y_\pm$. Then the center point $z=(1, y)$ is given by $2=x_+ + x_-$ and $2y=y_+ + y_-$. The first equation lets us get rid of $x_-$, and so we have three variables: $x_+$, $y_-$, and $y_+$. These variables have rather sophisticated domain. Here are the inequalities that define this domain:
\begin{align*}
& x_+ e^{-y_+}\in [1, Q] \\
& (2-x_+)e^{-y_-} \in [1,Q]\\
& e^{-\frac{y_+ + y_-}{2}} \in [1,Q].
\end{align*}
This domain is somewhat inconvenient for us. The ``explanation'' is the following. We want to minimize some function on this domain. In the interior we will be able to do it, but then we should switch to the boundary, that is pretty ``curved''.

It would be more convenient to introduce variables, for example, $x_+ e^{-y_+}$ and $x_- e^{-y_-}$. Their domain is $[1, Q]\times [1,Q]$, which looks better.

However, these variables are still not good enough. We are about to introduce the ``best'' variables.

\paragraph{New variables}

We denote
\begin{align*}
&\alpha = y-\log\frac{1}{Q_0} = y+\log(Q_0), \\
& \alpha_+ = y_+ - \log\frac{x_+}{Q_0},\\
&\alpha_- = y_- - \log\frac{2-x_+}{Q_0}.
\end{align*}
In fact, $\alpha, \alpha_\pm$ are vertical distance from the point $z, z_\pm$ to $\Gamma_{Q_0}$.
For a fixed $\alpha$ we have {\bf three} variables: $x_+, \alpha_+, \alpha_-$. They are related by equation
\begin{equation}\label{relalx}
2\alpha = \alpha_+ + \alpha_- + \log(x_+) + \log(2-x_+).
\end{equation}
So $\alpha_\pm, x_+$ are on some manifold, and to minimize a function of these three variables we should use Lagrange multipliers.
\paragraph{New domain}
Fix $\alpha \in [\log\frac{Q_0}{Q}, \log(Q_0)]$. We have following inequalities for $\alpha_{\pm}$ and $x_+$:
\begin{align*}
& \alpha_\pm \in [\log\frac{Q_0}{Q}, \log(Q_0)], \\
& x_+ \in [1,2),\\
& \alpha_+ + \alpha_- \geqslant 2\alpha.
\end{align*}
The last inequality follows from the fact that $\log(x_+) + \log(2-x_+)\leqslant 0$.

We also notice that in fact $x_+$ can not access all values from $[1,2)$. We do not pay attention to this fact, because from the \eqref{relalx}, $x_+$ can be calculated in terms of $\alpha_\pm$, and this is how Lagrange multipliers work.

So for any fixed $\alpha$ we pay attention only to the domain for $\alpha_\pm$. We state an easy lemma to understand this domain.
We notice that since $\alpha \geqslant \log\frac{Q_0}{Q}$, we get that the line $\alpha_+ + \alpha_- = 2\alpha$ intersects the square $[\log\frac{Q_0}{Q}, \log(Q_0)]\times [\log\frac{Q_0}{Q}, \log(Q_0)]$ (on the $(\alpha_-, \alpha_+$ plane).

We notice that domain will look differently when $\alpha \geqslant \log(Q_0)-\frac{1}{2}\log(Q)$ and when $\alpha$ is smaller than this number. The reason is that the vertex $\alpha_- = \log(Q_0)$, $\alpha_+ = \log\frac{Q_0}{Q}$ may find itself under the line $\alpha_+ + \alpha_- = 2\alpha$.

Therefore, the domain for $\alpha_-, \alpha_+$ looks as follows.
\begin{center}
\includegraphics[width=0.5\linewidth]{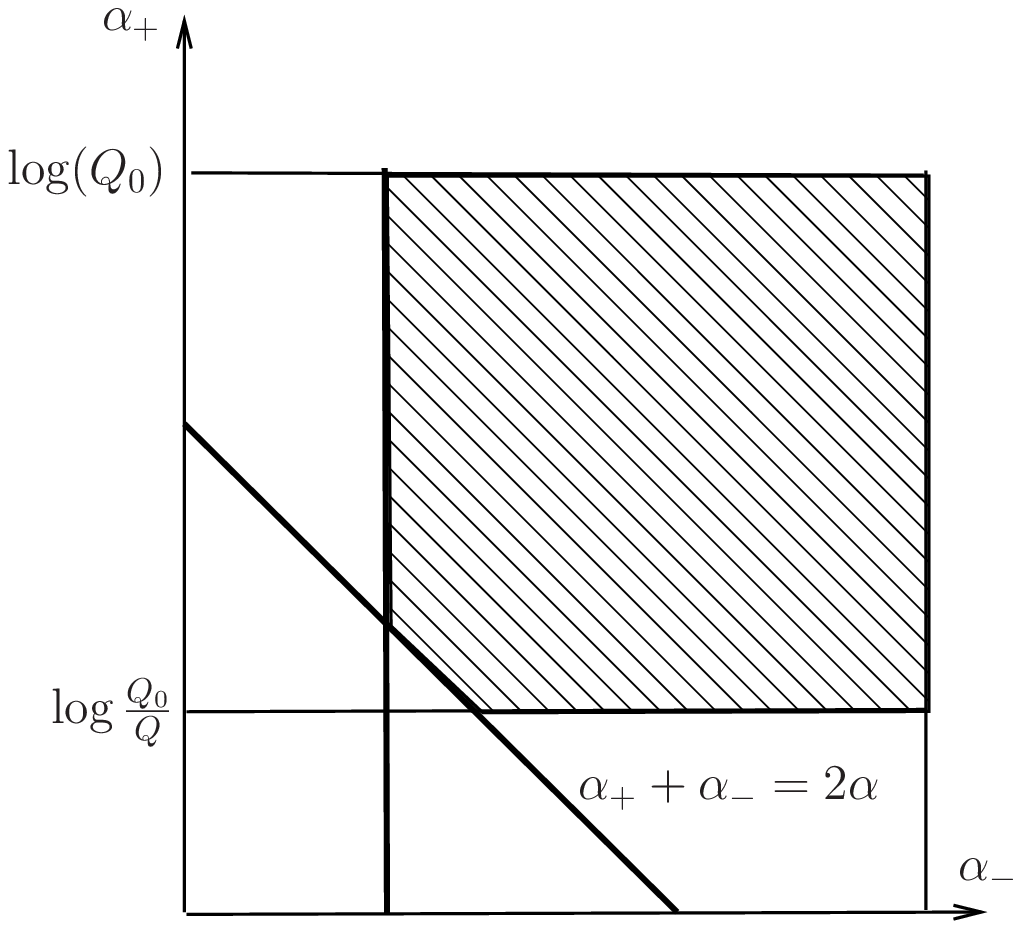}~
\includegraphics[width=0.5\linewidth]{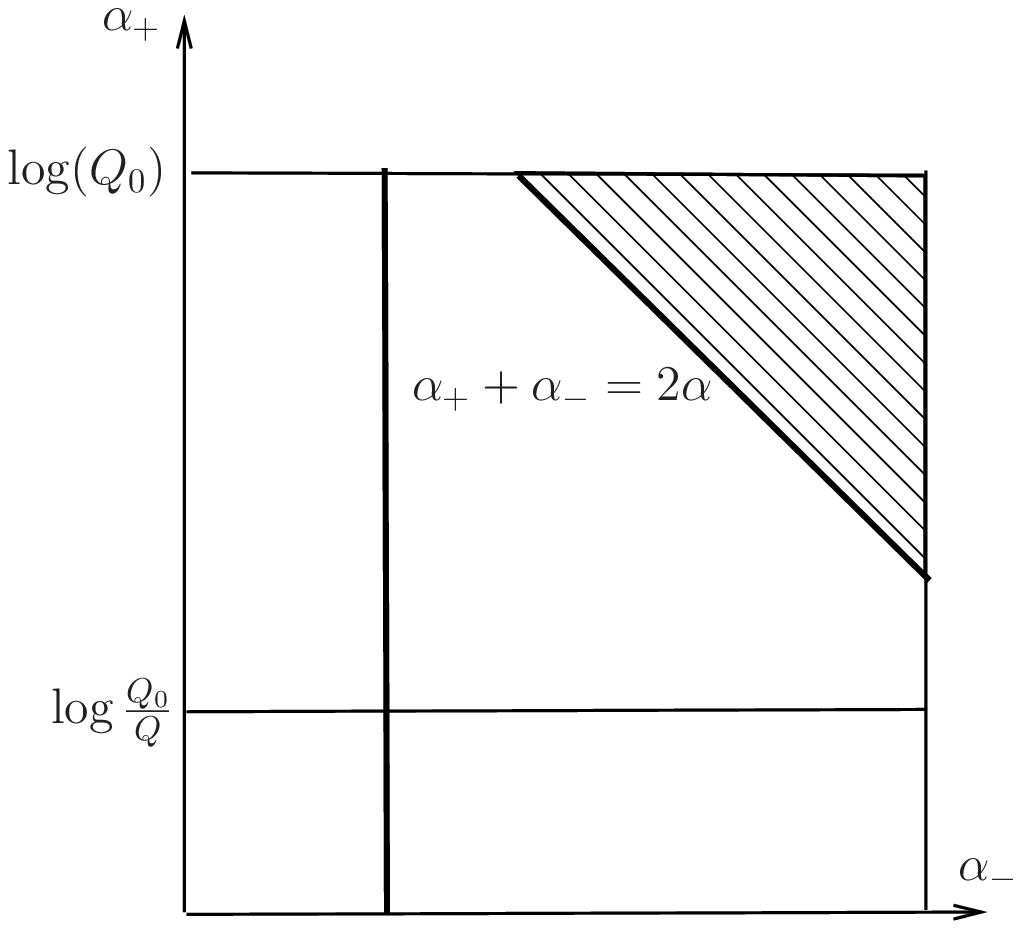}
\end{center}

We are going to study these two cases together. We shall prove that if the global minimum of $2B_0(z)-B_0(z_+)-B_0(z_-)$ is strictly negative then it is not obtained neither in the interior, nor in the interior of edges. Then we will investigate vertices. As the reader can see, edges and vertices, where $\alpha_+ + \alpha_- = 2\alpha$ correspond to vertical segments $[z_-, z_+]$ and therefore are trivial.

Thus, the second case will give us one interesting case: $\alpha_+ = \alpha_- = \log(Q_0)$, and the first case will give the same vertex and $\alpha_+ = \log\frac{Q_0}{Q}$, $\alpha_- = \log(Q_0)$.

After this short plan, let us give all details of searching for possible global minima.

\paragraph{Old variables and new variables}

We now need to recalculate old variables in terms of new ones. In particular, we need to relate $v$ and $v_\pm$ with $\alpha$ and $\alpha_\pm$ respectively. We will show in a moment that it is possible. The reason is that $\alpha$ is closely related to the number $a$, the first coordinate of a point, where the tangent line to $\Gamma_{Q_0}$, $\ell(z)$, ``kisses'' $\Gamma_{Q_0}$.

Let us proceed. Take any point $z=(x,y)$ in $\Omega_Q$. We for some time forget that $x=1$, and do calculations for arbitrary $x$. We do it because then they will work for $z_\pm$.

We say one more time that now $v$ and $a$ correspond to $Q_0$, so we should write $v_0$ and $a_0$, but to simplify the notation we do not do it.

We write the equation of the line $\ell(z)$, tangent to $\Gamma_{Q_0}$:
$$
y=\frac{\gamma_0 x}{v} + \log(v)-\gamma,
$$
so
$$
\alpha = y-\log\frac{x}{Q_0} = \frac{\gamma_0 x}{v}+\log(v)-\log(x)+\log(Q_0)-\gamma_0.
$$
Using the definition of $\gamma_0$, we obtain
$$
\alpha = \frac{\gamma_0 x}{v} + \log(v)-\log(x) - 1 - \log(\gamma_0) = \frac{\gamma_0 x}{v} - \log\frac{\gamma_0 x}{v} - 1.
$$
We now introduce a function
$$
f(t)=t-\log(t)-1, \; \; \; t>0.
$$
This function has already appeared in the definition of $\gamma_0$.
Function $f$ is decreasing from $+\inf$ to $0$ when $t\in (0, 1]$ and therefore has an inverse
$$
g(t)=f^{-1}(t), \; \; \; g\colon [0, \inf)\to (0, 1].
$$
\begin{center}
\includegraphics[width=0.5\linewidth]{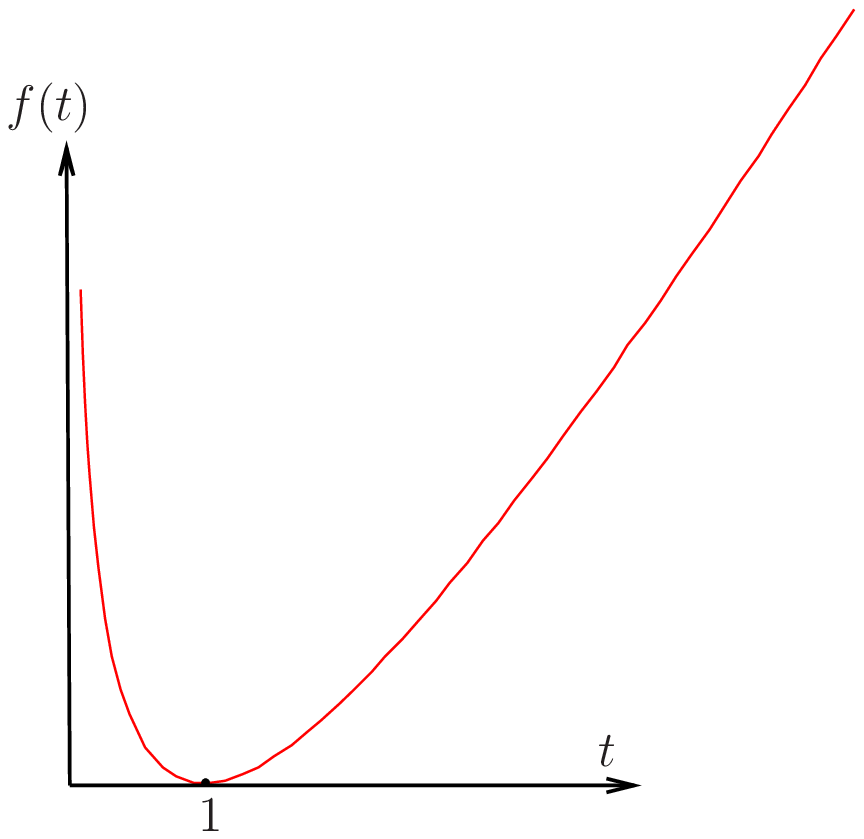}~
\includegraphics[width=0.5\linewidth]{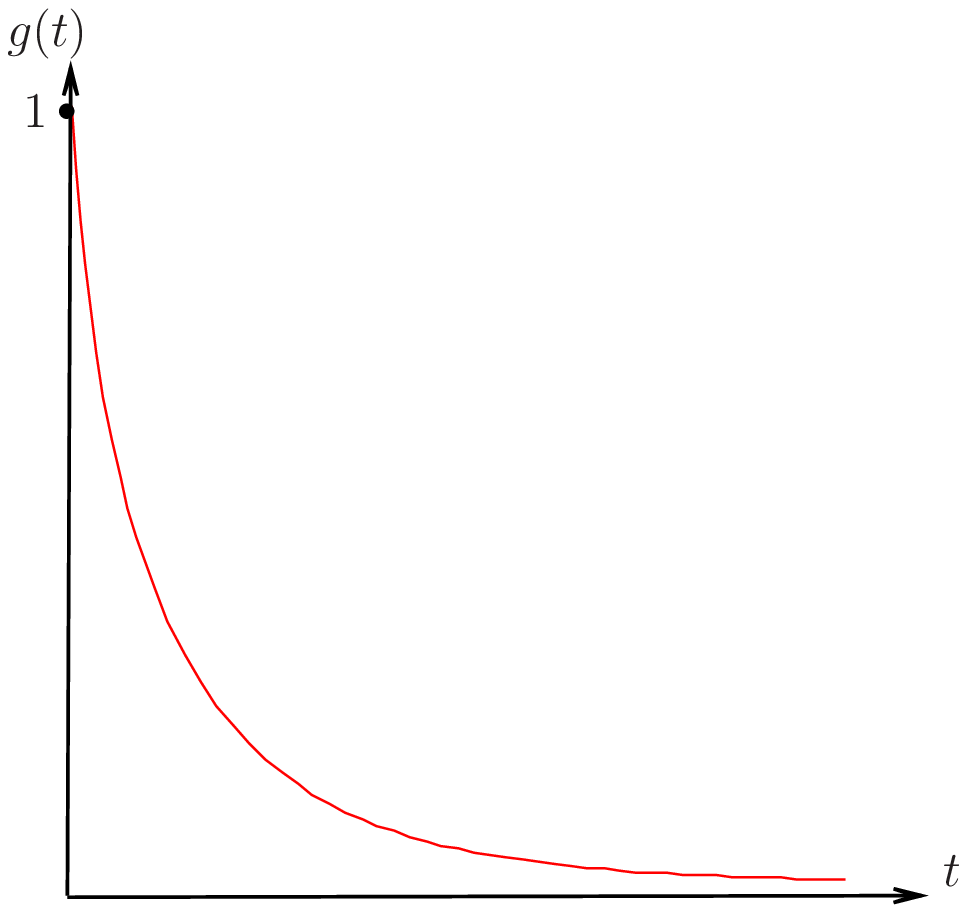}
\end{center}
We now have an equation
$$
\alpha = f(\frac{\gamma_0}{x}{v}).
$$
We notice that $x\leqslant a$, so $\gamma_0 x \leqslant \gamma_0 a = v$, and so $g(f(\frac{\gamma_0 x}{v})) = \frac{\gamma_0 x}{v}$. Therefore, we write
$$
\frac{\gamma_0 x}{v} = g(\alpha),
$$
or
$$
v=\frac{\gamma_0 x}{g(\alpha)}.
$$
In particular we notice that $g(\alpha) = \frac{x}{a}$. Basically this is the geometric meaning of $\alpha$.

The above equation with particular points $z=(1,y)$ and $z_\pm$ gives us
\begin{align*}
&v=\frac{\gamma_0}{g(\alpha)},\\
&v_+ = \frac{\gamma_0 x_+}{g(\alpha_+)}, \\
&v_- = \frac{\gamma_0 x_-}{g(\alpha_-)}.
\end{align*}

We are now ready to introduce the function that we want to minimize.

\paragraph{Function $\Delta$ in new variables}

We remind the reader that we fix $\alpha$ and have three variables $x_+$, $\alpha_+$, and $\alpha_-$ on the manifold
$$
2\alpha = \alpha_+ + \alpha_- + \log(x_+) + \log(2-x_+).
$$
We also remind the reader that $x_- = 2-x_+$ and $x=1$. Therefore, our function $\Delta$ will be
\begin{multline*}
\Delta(x_+, \alpha_+, \alpha_-) = 2B_0(z) - B_0(z_+) - B_0(z_-) = \\=2\left(\log(v) + \frac{1-v}{\gamma_0} \right) - \left(x_+ \log(v_+) + \frac{x_+ - v_+}{\gamma_0}\right) - \left((2-x_+)\log(v_-) + \frac{2-x_+ - v_+}{\gamma_0}\right).
\end{multline*}
We now want to rewrite the last expression in terms of $\alpha_\pm$ and $x_+$. We get
\begin{multline*}
\Delta = (2x\log v - x_+ \log v_+ - x_- \log v_-) - \frac{1}{\gamma_0}(2v-v_+ - v_-) = \\=2\log \frac{1}{g(\alpha)} - x_+ \log\frac{x_+}{g(\alpha_+)} - (2-x_+) \log\frac{2-x_+}{g(\alpha_-)} - \frac{2}{g(\alpha)} + \frac{x_+}{g(\alpha_+)} + \frac{2-x_+}{g(\alpha_-)}.
\end{multline*}
Due to the huge importance of this function, we write the final result separately:
$$
\Delta(x_+, \alpha_+, \alpha_-) = 2\log \frac{1}{g(\alpha)} - x_+ \log\frac{x_+}{g(\alpha_+)} - (2-x_+) \log\frac{2-x_+}{g(\alpha_-)} - \frac{2}{g(\alpha)} + \frac{x_+}{g(\alpha_+)} + \frac{2-x_+}{g(\alpha_-)}.
$$

We now start to minimize it. We prove the following theorem.
\begin{theorem}\label{minimum}
\begin{enumerate}
\item For a fixed $\alpha\leqslant \log(Q_0) - \frac{1}{2}\log(Q)$ the following holds:
$$
\min \Delta(x_+, \alpha_+, \alpha_-) = \min\left[0, \Delta(\widehat{x_+},\log(Q_0), \log(Q_0)), \Delta(\widetilde{x_+}, \log\frac{Q_0}{Q}, \log(Q_0))\right],
$$
where $\widehat{x_+}$ is a solution of equation $2\alpha = 2\log(Q_0) + \log(x_+) + \log(2-x_+)\; x_+\geqslant 1$, and $\widetilde{x_+}$ is a solution of equation $2\alpha = \log(Q_0) + \log\frac{Q_0}{Q} + \log(x_+)+\log(2-x_+), \; x_+\geqslant 1$.
\;\;\;\;\;\;\item For a fixed $\alpha> \log(Q_0) - \frac{1}{2}\log(Q)$ the following holds:
$$
\min \Delta(x_+, \alpha_+, \alpha_-) = \min\left[0, \Delta(\widehat{x_+},\log(Q_0), \log(Q_0))\right].
$$
\end{enumerate}
\end{theorem}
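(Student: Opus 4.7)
\medskip

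\noindent\textbf{Proof proposal for Theorem \ref{minimum}.}

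The plan is a standard constrained-optimization analysis of $\Delta(x_+,\alpha_+,\alpha_-)$ on the polygonal domain in $(\alpha_+,\alpha_-)$, carried along the surface
$$
G(x_+,\alpha_+,\alpha_-):=2\alpha-\alpha_+-\alpha_--\log x_+-\log(2-x_+)=0.
$$
Rather than eliminate $x_+$ explicitly (the resulting formula is unwieldy), I would use Lagrange multipliers with $G=0$. The key computational input is the identity $g'(s)=g(s)/(g(s)-1)$, which follows by differentiating $f(g(s))=s$ for $f(t)=t-\log t-1$. Plugging this into $\Delta$ gives the pleasant simplification
$$
\partial_{\alpha_+}\Delta=\frac{x_+}{g(\alpha_+)}=\frac{v_+}{\gamma_0},\qquad \partial_{\alpha_-}\Delta=\frac{2-x_+}{g(\alpha_-)}=\frac{v_-}{\gamma_0},
$$
together with an explicit expression for $\partial_{x_+}\Delta$. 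Combined with $\partial_{\alpha_\pm}G=-1$ and $\partial_{x_+}G=\frac{2(x_+-1)}{x_+(2-x_+)}$, the Lagrange system at an interior critical point forces $v_+=v_-$ from the $\alpha_\pm$-equations, and then the $x_+$-equation collapses to the degenerate configuration $z_+=z_-$, where $\Delta=0$. Hence no interior critical point can realize a strictly negative minimum.

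Next I would treat the edges of the polygon (see the two figures \texttt{domain1}, \texttt{domain2}). The slanted edge $\alpha_++\alpha_-=2\alpha$ forces $\log x_++\log(2-x_+)=0$, i.e. $x_+=1$, so the segment $[z_-,z_+]$ is vertical. A direct calculation shows $y\mapsto B_0(1,y)$ is concave on the relevant range (this is the one-dimensional version of local concavity and uses only that $v$ depends smoothly on $y$ via $\gamma_0/g(\alpha)$ with $g$ monotone), giving $\Delta\geq 0$ on this edge; this is the ``trivial'' input recorded by the entry $0$ in the minimum. On each of the remaining four candidate edges $\alpha_\pm\in\{\log(Q_0/Q),\log Q_0\}$, the problem becomes a one-variable constrained optimization. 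Here I would again apply a one-dimensional Lagrange/boundary analysis: differentiating $\Delta$ along the edge (with $x_+$ determined from $G=0$) and using the same simplification $\partial_{\alpha_\pm}\Delta=v_\pm/\gamma_0$, I expect to show the restricted derivative vanishes only at points where the interior argument already applies (forcing $z_+=z_-$ and $\Delta=0$), so the edge minimum is achieved at one of its two vertices.

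Finally I would enumerate the vertices of the polygon. The vertices lying on the slanted edge fall under the trivial case above. By the symmetry $\alpha_+\leftrightarrow\alpha_-$ (under $x_+\mapsto 2-x_+$), pairs of vertices give the same value of $\Delta$. In the regime $\alpha>\log Q_0-\tfrac12\log Q$ only one nontrivial vertex $(\log Q_0,\log Q_0)$ survives, producing part (2) of the theorem; in the regime $\alpha\leq\log Q_0-\tfrac12\log Q$ the vertex $(\log(Q_0/Q),\log Q_0)$ also enters, producing part (1). The values $\widehat{x_+}$ and $\widetilde{x_+}$ are precisely the two (possibly equal) roots of $G=0$ with $x_+\geq 1$ at these vertices.

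The main obstacle I anticipate is the edge analysis. The interior case is tamed completely by the observation $v_+=v_-$, but on an edge one loses a degree of freedom asymmetrically, so $v_+$ and $v_-$ can no longer be directly equated. Ruling out an interior critical point along each such edge requires a careful examination of the restricted derivative in terms of $g$, and seems to be where the implicit nature of $v,\gamma_0$ in the dyadic problem (contrast with \cite{SlVa}) forces the most delicate calculation; the payoff, however, is that the list of candidate minimizers reduces to the finite set stated in the theorem, which can then be analyzed one-by-one just as in the boundary calculations of Sections \ref{bound1}--\ref{bound3}.
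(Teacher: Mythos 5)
Your high-level plan (interior Lagrange analysis, then edges, then vertices) matches the paper, and you correctly identify the key simplification $\partial_{\alpha_\pm}\Delta = x_\pm/g(\alpha_\pm) = v_\pm/\gamma_0$. But there are two substantive issues.

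First, a smaller one: your claim that at an interior critical point ``the $x_+$-equation collapses to the degenerate configuration $z_+=z_-$'' is not correct. Once the $\alpha_\pm$-equations give $v_+ = v_-(=\gamma_0\lambda)$, a short calculation shows $\partial_{x_+}\Delta = \lambda\bigl(\tfrac{1}{x_+}-\tfrac{1}{2-x_+}\bigr)$, which exactly matches $\lambda\,\partial_{x_+}(\text{constraint})$; so the $x_+$-equation is automatically satisfied and the interior critical set is a whole one-parameter family (all configurations with $z,z_\pm$ collinear on a common tangent line to $\Gamma_{Q_0}$), not a single point $z_+=z_-$. The conclusion $\Delta=0$ still holds — the paper gets it by plugging $g(\alpha)=1/\lambda$ directly into $\Delta$ — but not for the reason you state.

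Second, and more seriously, your anticipated edge argument will not close. You hope that on each of the edges $\alpha_\pm\in\{\log(Q_0/Q),\log Q_0\}$ the restricted critical-point condition again forces $\Delta=0$, so that the edge minimum is at a vertex. This works for the edge $\alpha_-=\log Q_0$: there one gets $\lambda = x_+/g(\alpha_+) \geq 1$ and then $f(\lambda/(2-x_+))=f(1/g(\alpha_-))$ with both arguments $\geq 1$, forcing equality and $\Delta=0$. But on the edge $\alpha_+=\log(Q_0/Q)$ the analogous equation is $f(\lambda/x_+)=f(1/g(\alpha_+))$ with $\lambda=(2-x_+)/g(\alpha_-)$, and here one \emph{cannot} conclude $\lambda/x_+\geq 1$, so $f$ is not injective on the relevant range and the equality may fail. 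The paper instead rewrites the critical value as $\Delta = 2\bigl(f(\lambda)-f(1/g(\alpha))\bigr)$ and splits into cases: if $\lambda\geq 1/g(\alpha)$ then $\Delta\geq 0$ by monotonicity; otherwise it invokes a purely \emph{geometric} lemma (Lemma~\ref{nevilezet}, together with the tangent-line argument) showing that in this regime the whole segment $[z_-,z_+]$ actually stays inside $\Omega_{Q_0}$, so local concavity of $B_0$ applies directly. This geometric containment step has no analogue in your plan and is the genuinely new ingredient you would need. Relatedly, the paper first proves the comparison lemma $\Delta(u,v)\geq\Delta(v,u)$ for $u>v$ (with $x_+$ fixed) to discard the two edges $\alpha_+=\log Q_0$ and $\alpha_-=\log(Q_0/Q)$; your ``symmetry under $\alpha_+\leftrightarrow\alpha_-$ and $x_+\mapsto 2-x_+$'' is not the same statement and does not directly eliminate those edges under the normalization $x_+\geq 1$.
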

\begin{zamech}
We notice that the nonzero minimum may be attained only on vertices.
\end{zamech}
\paragraph{Derivatives of $\Delta$}
Before we form the lagrangian, let us find derivatives of $\Delta$ with respect to $\alpha_+$, $\alpha_-$ and $x_+$.
First of all,
$$
g^{\prime}(t) = \frac{1}{f^{\prime}(g(t))} = \frac{g(t)}{g(t)-1}.
$$
So,
$$
\frac{\partial \Delta}{\partial \alpha_+} = \frac{x_+}{g(\alpha_+)}\frac{g(\alpha_+)}{g(\alpha_+)-1} - \frac{x_+}{g(\alpha_+)^2} \frac{g(\alpha_+)}{g(\alpha_+)-1} = \frac{x_+}{g(\alpha_+)}.
$$
Similarly,
$$
\frac{\partial\Delta}{\partial \alpha_-} = \frac{2-x_+}{g(\alpha_-)}.
$$
Finally, we take the derivative with respect to $x_+$.
$$
\frac{\partial \Delta}{\partial x_+} = -\log\frac{x_+}{g(\alpha_+)} - 1 + \log\frac{2-x_+}{g(\alpha_-)} + 1 + \frac{1}{g(\alpha_+)} - \frac{1}{g(\alpha_-)} = -\log\frac{x_+}{g(\alpha_+)} + \log\frac{2-x_+}{g(\alpha_-)} + \frac{1}{g(\alpha_+)} - \frac{1}{g(\alpha_-)}.
$$

\paragraph{Step 1: interior of the domain}

Suppose we are in the interior of domain for $\alpha_+, \alpha_-$. We form a Lagrangian:
$$
L(x_+, \alpha_+, \alpha_-, \lambda) = \Delta(x_+, \alpha_+, \alpha_-)-\lambda\cdot(\alpha_+ + \alpha_- + \log(x_+) + \log(2-x_+) - 2\alpha).
$$
Differentiating it with respect to $\alpha_\pm$, we obtain
$$
\frac{x_+}{g(\alpha_+)}=\frac{2-x_+}{g(\alpha_-)} = \lambda.
$$
These equalities mean that
$$
g(\alpha_+) = \frac{x_+}{\lambda}, \; \; \; \; g(\alpha_-) = \frac{2-x_+}{\lambda}.
$$
Applying $f$ to both sides, and recalling that $f(g(t))=t$, we get
\begin{align*}
&\alpha_+ = f(\frac{x_+}{\lambda}) = \frac{x_+}{\lambda} - \log(x_+)+\log(\lambda)-1,\\
&\alpha_- = f(\frac{2-x_+}{\lambda})=\frac{2-x_+}{\lambda}-\log(2-x_+)+\log(\lambda)-1.
\end{align*}
Let us plug these equalities into
$$
\alpha_+ + \alpha_- - 2\alpha + \log(x_+) + \log(2-x_+) = 0.
$$
By direct calculation,
$$
\alpha = \frac{1}{\lambda} + \log(\lambda) - 1 = f(\frac{1}{\lambda}).
$$
We notice that $\frac{1}{\lambda} = \frac{g(\alpha_+)}{x_+} \leqslant g(\alpha_+) \leqslant 1$,
and so $g(f(\frac{1}{\lambda})) = \frac{1}{\lambda}$.

Notice that it would not be true if $\lambda$ was less than $1$.

So, $g(\alpha) = \frac{1}{\lambda}$ (in fact, from this equation we find $\lambda$).
Now we can calculate $\Delta$ at our point.

$$
\Delta = 2\log(\lambda)-x_+ \log(\lambda) - (2-x_+)\log(\lambda) - 2\lambda + \lambda +\lambda = 0.
$$
\paragraph{Conclusion}
From the calculation above we conclude the following: either the global minimum of $\Delta$ is zero, or the global minimum is obtained on the boundary.

\paragraph{Step 2: reduction to the case $\alpha_- \geqslant \alpha_+$.}
We now prove a technical but very useful lemma. It will show that it is sufficient to minimize $\Delta$ only on half of our domain, when $\alpha_-\geqslant \alpha_+$. This will show that we do not need to consider edges $\alpha_+=\log(Q_0)$ and $\alpha_- = \log\frac{Q_0}{Q}$, except for vertices.
\begin{lemma}
Fix $x_+$ and let
$$
\Delta(\alpha_+, \alpha_-)=\Delta(x_+,\alpha_+, \alpha_-) = 2\log \frac{1}{g(\alpha)} - x_+ \log\frac{x_+}{g(\alpha_+)} - (2-x_+) \log\frac{2-x_+}{g(\alpha_-)} - \frac{2}{g(\alpha)} + \frac{x_+}{g(\alpha_+)} + \frac{2-x_+}{g(\alpha_-)}.
$$
If $u>v$ then $\Delta(u,v) \geqslant \Delta(v,u)$.
\end{lemma}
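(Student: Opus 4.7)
The key observation I would start from is that $\alpha$, defined by the constraint $2\alpha = \alpha_+ + \alpha_- + \log x_+ + \log(2-x_+)$, depends on $(\alpha_+, \alpha_-)$ only through the symmetric combination $\alpha_+ + \alpha_-$. Consequently the two terms in $\Delta$ that involve $\alpha$, namely $2\log(1/g(\alpha)) - 2/g(\alpha)$, are unchanged when $\alpha_+$ and $\alpha_-$ are swapped, and so they drop out of $\Delta(u,v) - \Delta(v,u)$ entirely. What remains is a difference of single-variable contributions, which is what makes the lemma tractable.

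Concretely, set
\[
F_s(\beta) := -s\log\frac{s}{g(\beta)} + \frac{s}{g(\beta)}.
\]
Then the symmetry observation reduces the claim to
\[
\Delta(u,v) - \Delta(v,u) = \bigl[F_{x_+}(u) - F_{x_+}(v)\bigr] - \bigl[F_{2-x_+}(u) - F_{2-x_+}(v)\bigr] \geqslant 0.
\]
The next step is to compute $F_s'$. I would either invoke the paper's earlier calculation of $\partial\Delta/\partial\alpha_+ = x_+/g(\alpha_+)$, which is exactly $F_{x_+}'(\alpha_+)$, or verify it directly: starting from $f(g(\beta))=\beta$ one gets $g'(\beta) = g(\beta)/(g(\beta)-1)$, and a two-line calculation then gives the remarkably clean formula $F_s'(\beta) = s/g(\beta)$.

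With these two ingredients, writing the difference as an integral collapses the problem:
\[
\Delta(u,v) - \Delta(v,u) = \int_v^u \bigl[F_{x_+}'(\beta) - F_{2-x_+}'(\beta)\bigr]\, d\beta = \int_v^u \frac{2(x_+ - 1)}{g(\beta)}\, d\beta.
\]
This is manifestly nonnegative because $x_+ \in [1,2)$ gives $x_+ - 1 \geqslant 0$, $g(\beta) > 0$ on the relevant range of $\beta$, and $u > v$ by hypothesis.

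I don't anticipate any real obstacle; the entire argument is driven by the symmetry observation about $\alpha$ together with a derivative formula already computed in the paper. The only minor thing I would double-check is that the interval of integration $[v,u]$ lies inside the domain on which $g$ is defined and positive (i.e.\ $[v,u] \subset [\log(Q_0/Q), \log Q_0]$), which is automatic since both $u$ and $v$ are admissible values of $\alpha_{\pm}$.
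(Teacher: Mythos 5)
Your proof is correct and follows essentially the same route as the paper: the key observation that $\alpha$ depends only on the symmetric sum $\alpha_+ + \alpha_-$ (so those terms cancel), followed by a reduction to a single-variable monotonicity statement, is exactly what the paper does. The only cosmetic difference is that the paper substitutes $t = g(u)$, $s = g(v)$ and invokes the decrease of $\varphi(x) = 1/x + \log x$ on $(0,1]$, whereas you differentiate in $\beta$ and integrate; since $\frac{d}{d\beta}\bigl[\log g(\beta) + 1/g(\beta)\bigr] = 1/g(\beta)$, the two are the same computation read in opposite directions.
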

\begin{zamech}[Discussion]
So, if $\alpha_+ > \alpha_-$ then $\Delta(\alpha_+, \alpha_-)\geqslant \Delta(\alpha_-, \alpha_+)$,
and so if the global minimum is attained on the boundary, it is for sure attained on the part when $\alpha_+ \leqslant \alpha_-$.
\end{zamech}
\begin{zamech}[Discussion]
Notice that this lemma is natural. As we have seen from the investigation of cases when $z, z_\pm$ are on the boundary, the worst case happens when $z_-\in \Gamma$ and $z_+\in \Gamma_Q$. This corresponds to $\alpha_- = \log(Q_0)$ and $\alpha_+ = \log\frac{Q_0}{Q}$, which is smaller than $\alpha_-$.
\end{zamech}
\begin{proof}
First, since $u>v$ we have $g(u)<g(v)\leqslant 1$. We denote $t=g(u)$ and $s=g(v)$, so $t<s\leqslant 1$. We have
\begin{multline*}
\Delta(u,v)-\Delta(v,u) = x_+ \log(t) +(2-x_+)\log(s)+\frac{x_+}{t} + \frac{2-x_+}{s} - \left( x_+ \log(s) + (2-x_+)\log(t) + \frac{x_+}{s} + \frac{2-x_+}{t} \right) =\\= (2x_+-2)\log(t)  +\frac{2x_+-2}{t} +(2-2x_+)\log(s) + \frac{2-2x_+}{s} = (2x_+-2) \left(\frac{1}{t}+\log(t) - \frac{1}{s}-\log(s)\right).
\end{multline*}
Denote $\vf(x)=\frac{1}{x} + \log(x)$. Then $\vf^{\prime}(x)=\frac{1}{x}-\frac{1}{x^2} = \frac{x-1}{x^2}<0$ when $x\leqslant 1$.
So, since $t<s\leqslant 1$, we get
$$
\Delta(u,v)-\Delta(v,u)\geqslant 0.
$$
\end{proof}
\paragraph{Step 3: edge $\alpha_+ + \alpha_- = 2\alpha $}
In this case $x_+=1$, and so $2-x_+=1$, and we have a vertical line segment $[z_-, z_+]$. It definitely lies entirely in $\Omega_Q$, where the function $B_0$ is locally concave. Therefore, $\Delta\geqslant 0$.

\paragraph{Step 4: edge $\alpha_- = \log(Q_0)$}

In this case our manifold is
$$
2\alpha = \alpha_+ + \log(Q_0) + \log(x_+) + \log(2-x_+).
$$
Keeping in mind that $\alpha_-$ is fixed and we can not differentiate with respect to it, we write the same Lagrangian as before, and take derivatives with respect to $\alpha_+$ and $x_+$. We have
$$
L(x_+, \alpha_+, \alpha_-, \lambda) = \Delta(x_+, \alpha_+, \alpha_-)-\lambda\cdot(\alpha_+ + \alpha_- + \log(x_+) + \log(2-x_+) - 2\alpha),
$$
and so
$$
\frac{x_+}{g(\alpha_+)}=\lambda.
$$
In particular, we again get that $\lambda \geqslant 1$.
We now differentiate with respect to $x_+$, and we get
$$
-\log\frac{x_+}{g(\alpha_+)}+\log\frac{2-x_+}{g(\alpha_-)} + \frac{1}{g(\alpha_+)}-\frac{1}{g(\alpha_-)} - \lambda\left(\frac{1}{x_+} - \frac{1}{2-x_+}\right) = 0.
$$
Using the equality $\frac{x_+}{g(\alpha_+)}=\lambda$, we get
$$
-\log(\lambda)+\log\frac{2-x_+}{g(\alpha_-)} - \frac{1}{g(\alpha_-)}+\frac{\lambda}{2-x_+}=0,
$$
and thus
$$
f(\frac{\lambda}{2-x_+}) = f(\frac{1}{g(\alpha_-)}).
$$
We notice that $2-x_+\leqslant 1$ and $\lambda \geqslant 1$, so $\frac{\lambda}{2-x_+}\geqslant 1$. Since $f(t)$ increases when $t\geqslant 1$, we get
$$
\frac{\lambda}{2-x_+} = \frac{1}{g(\alpha_-)}.
$$
The same equation we had when we were investigating the interior. This equation yields to $\Delta=0$.
\paragraph{Step 5: $\alpha_+ = \log\frac{Q_0}{Q}$}

This edge is more delicate. Here we differentiate with respect to $\alpha_-$ and $x_+$.

\begin{align*}
&\frac{2-x_+}{g(\alpha_-)}=\lambda,\\
&-\log\frac{x_+}{g(\alpha_+)} + \log\frac{2-x_+}{g(\alpha_-)} + \frac{1}{g(\alpha_+)} - \frac{1}{g(\alpha_-)} - \lambda(\frac{1}{x_+} - \frac{1}{2-x_+}) = 0.
\end{align*}
Substituting the first one into the second, we get
$$
-\log\frac{x_+}{g(\alpha_+)} +\log(\lambda) + \frac{1}{g(\alpha_+)} - \frac{\lambda}{x_+}=0.
$$
We make the following remark. Similarly to the previous step, we get $f(\frac{\lambda}{x_+}) = f(\frac{1}{g(\alpha_+)})$. But now we can not say that $\frac{\lambda}{x_+} \geqslant 1$, and so we can not conclude that $\frac{\lambda}{x_+} = \frac{1}{g(\alpha_+)}$. We show how to finish the proof without this conclusion. We also warn the reader that this proof would not work in the previous step because it is tied to the fact that $x_+$ is on the $Q$-boundary of $\Omega_Q$.

We now proceed as follows:
$$
\frac{x_+}{g(\alpha_+)} - x_+\log\frac{x_+}{g(\alpha_+)} = \lambda-x_+\log(\lambda).
$$
Substituting this in $\Delta$, we get
$$
\Delta = 2\log\frac{1}{g(\alpha)} + \lambda-x_+\log(\lambda) - 2\log(\lambda)-\frac{2}{g(\alpha)}+\lambda = 2(f(\lambda)-f(\frac{1}{g(\alpha)})=2(f(\frac{2-x_+}{g(\alpha_-)}) - f(\frac{1}{g(\alpha)})).
$$

Notice that if $\frac{2-x_+}{g(\alpha_-)} \geqslant \frac{1}{g(\alpha)} \geqslant 1$ then, due to the monotonicity of $f(t)$, we get that $\Delta\geqslant 0$. Therefore, we should prove that it is non negative when $\frac{2-x_+}{g(\alpha_-)}<\frac{1}{g(\alpha)}$. The following lemma proves this fact.
\begin{lemma}\label{nevilezet}
If $\frac{2-x_+}{g(\alpha_-)}<\frac{1}{g(\alpha)}$ then the line segment $[z_-, z_+]$ lies entirely in $\Omega_{Q_0}$ and, consequently, $2B_0(z) - B_0(z_+)-B_0(z_-)\geqslant 0$.
\end{lemma}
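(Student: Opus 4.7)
The plan is to show $[z_-, z_+] \subset \Omega_{Q_0}$, after which the local concavity of $B_0$ on $\Omega_{Q_0}$ established earlier immediately yields $2B_0(z) - B_0(z_+) - B_0(z_-) \geq 0$. The domain $\Omega_{Q_0}$ is bounded above by the curve $\Gamma\colon y = \log x$ and below by $\Gamma_{Q_0}\colon y = \log(x/Q_0)$. Since $\{y \leq \log x\}$ is the sub-graph of a concave function, it is convex; hence the segment automatically stays below $\Gamma$. The substantive task is to confirm that the segment also stays above $\Gamma_{Q_0}$.

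For the lower bound I would introduce the tangent line $\ell(z_-)$ to $\Gamma_{Q_0}$ at $(a_-, \log(a_-/Q_0))$, which by construction passes through $z_-$; being tangent from above to a concave graph, $\ell(z_-)$ lies at or above $\Gamma_{Q_0}$ everywhere on $(0,\infty)$. I then aim to show that $z_+$ lies on or above $\ell(z_-)$. Given that, since $z_-$ itself sits on $\ell(z_-)$, linearity of the line forces the entire segment $[z_-, z_+]$ to stay above $\ell(z_-)$, and hence above $\Gamma_{Q_0}$.

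To check that $z_+ \in \Gamma_Q$ lies above $\ell(z_-)$, a direct computation using $\ell(z_-)(x) = \log(a_-/Q_0) + x/a_- - 1$ yields
$$
\Gamma_Q(x) - \ell(z_-)(x) = \log(x/Q) - \log(a_-/Q_0) - x/a_- + 1 = \alpha_+ - f(x/a_-),
$$
since $\alpha_+ = \log(Q_0/Q)$. So the condition reduces to $f(x_+/a_-) \leq \alpha_+$, equivalently $x_+/a_- \in [g(\alpha_+), g^+(\alpha_+)]$, where $g^+$ denotes the inverse branch of $f$ taking values $\geq 1$; in terms of the two tangent points from $z_+$ this is precisely the inclusion $x_+/g^+(\alpha_+) \leq a_- \leq x_+/g(\alpha_+) = a_+$.

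The main obstacle is extracting that inclusion from the hypothesis $a_- < a$. For this step I would combine the hypothesis with the manifold constraint $2\alpha = \alpha_+ + \alpha_- + \log(x_+(2-x_+))$ and the admissibility ranges $\alpha, \alpha_\pm \in [\log(Q_0/Q), \log Q_0]$ inherited from $z, z_\pm \in \Omega_Q$, comparing $a_-$ against $a$ and $a_+$ via the monotonicity of $g$ and $g^+$. The key point is that $a_- < a$ is exactly the configuration in which $\ell(z_-)$ is ``steep enough'' to pass below $z_+$, excluding the bad regime; this is the same phenomenon that forced the distinction between $\lambda/x_+ \geq 1$ and $\lambda/x_+ < 1$ at the end of Step 5.
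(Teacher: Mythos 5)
The proposal correctly sets up the picture (upper boundary handled by convexity, lower boundary reduced to a tangent-line comparison) and the algebra reducing ``$z_+$ above $\ell(z_-)$'' to $f(x_+/a_-)\leqslant\alpha_+$ is right. But the crux of the lemma --- deriving that inequality from the hypothesis $a_-<a$ --- is exactly what you leave unproved, waving at ``the manifold constraint and the admissibility ranges.'' That is the entire content of the lemma, so the proof is not done.

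Worse, there is a structural reason to doubt the route as described. The target $f(x_+/a_-)\leqslant\alpha_+$ is the two-sided constraint $g(\alpha_+)\leqslant x_+/a_-\leqslant g^+(\alpha_+)$, i.e.\ both an \emph{upper} bound ($a_-\leqslant a_+$) and a \emph{lower} bound ($a_-\geqslant x_+/g^+(\alpha_+)$) on $a_-$. From $a_-<a$ and the constraint $2\alpha=\alpha_++\alpha_-+\log\bigl(x_+(2-x_+)\bigr)$ one can extract $f(x_+/a)<\alpha_+$, hence $a<a_+$, hence $a_-<a_+$, giving the upper half; but the hypothesis is itself only an upper bound on $a_-$ and supplies nothing in the other direction. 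In fact ``$z_+$ lies above $\ell(z_-)$'' is strictly stronger than ``$[z_-,z_+]\subset\Omega_{Q_0}$'' once $a_->x_+$ (the tangency point falls outside the segment's $x$-range), so there is no reason the desired inequality should even follow from the hypothesis.

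The paper avoids this by centering the tangent at the \emph{midpoint} $z$ rather than at $z_-$ and arguing by contraposition: a geometric sublemma shows that if $[z_-,z_+]$ exits $\Omega_{Q_0}$ then $x_+\geqslant p$, where $(p,\log(p/Q))$ is the second intersection of $\ell(z)$ with $\Gamma_Q$, which is precisely the one-sided inequality $f(x_+/a)\geqslant\alpha_+$ with $x_+/a\geqslant 1$; the midpoint relation $2\alpha=\alpha_++\alpha_-+\log(x_+ x_-)$ then converts this into $f\bigl((2-x_+)/a\bigr)\leqslant\alpha_-$, hence $a_-\geqslant a$, contradicting the hypothesis. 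Putting $z$ at the center is what makes the manifold constraint bite. Study that helper lemma and its algebraic chain --- the choice of tangent point is not cosmetic.
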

Before proving this lemma we need an observation, related to the geometry of $\Omega_Q$.

Take the point $z_+$, which in our case lies on $\Gamma_Q$, and take the tangent to $\Gamma_Q$. Since we assume that $x_+>x_-$ and $y_+>y_-$, we get the following: if the segment $[z_-, z_+]$ goes above this tangent line, then it lies entirely in $\Omega_Q$, and the fact stated in the lemma is true. So the only interesting case is when $[z_-, z_+]$ goes below this tangent. It means that it goes outside of $\Omega_Q$ nearby $z_+$, and then returns before it ``hits'' the point $z$. Therefore, the segment $[z_-, z]$ lies in $\Omega_Q$, so the only problem can occur between $z$ and $z_+$.
\begin{center}
\includegraphics[width=0.5\linewidth]{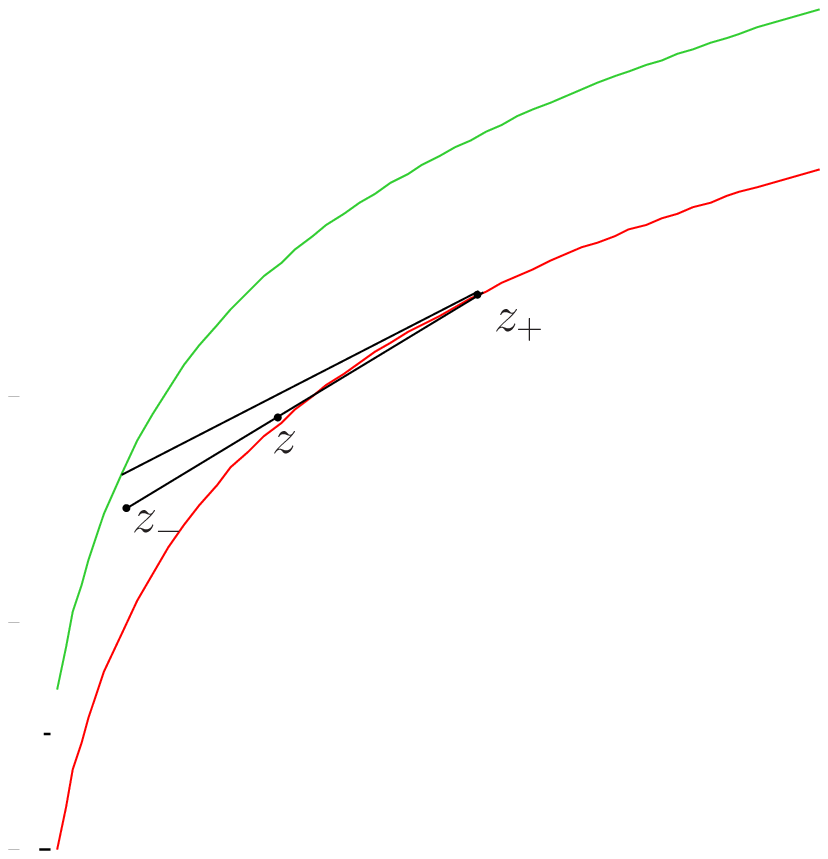}
\end{center}
Our lemma will be a consequence from the following one.
\begin{lemma}
Suppose $p\geqslant a \geqslant 1$, $\alpha = \frac{1}{a} - \log\frac{1}{a} - 1$ and $\alpha_+ = \frac{p}{a} -\log\frac{p}{a}-1$. If the line segment $[z_-, z_+]$ does not lie entirely in $\Omega_{Q_0}$ then $x_+ \geqslant p$.
\end{lemma}
\begin{proof}
Such $a$ and $p$ exist, because for every $u>0$ the equation $t-\log(t)-1=u$ has two solutions, one of which is less than $1$, and another is bigger than $1$.

We take our point $(1, y)$ and draw the tangent to $\Gamma_{Q_0}$ that goes to the right. Since the only possibility for $[z_-, z_+]$ to be outside of $\Gamma_Q$ is that part of $[z, z_+]$ is outside, we don't care about $z_-$ at all. If our $[z, z_+]$ goes above this tangent, then it's in $\Omega_{Q_0}$, and so the only ``bad'' case is when $[z, z_+]$ goes below. Suppose that the tangent ``kisses'' $\Gamma_{Q_0}$ at point $(a, \log\frac{a}{Q_0})$. Then the equation (in $(x_1, x_2)$ plane) is
$$
x_2 - y = \frac{1}{a} (x_1-1).
$$
Since $a$ satisfies the equation, and since $\alpha = y+\log(Q_0)$, we get
$$
\frac{1}{a} - \log\frac{1}{a}-1 = \alpha.
$$
Now take the point $(p, \log\frac{p}{Q})$ --- the point, where our tangent intersects $\Gamma_Q$ for the second time. This is the first time when our segment $[z, z_+]$ can return to $\Omega_Q$ (if it ever went out). Since $z_+$ is on the right-hand side from the ``return'' point, we have $x_+>p$. Let us find $p$.
\begin{center}
\includegraphics[width=0.5\linewidth]{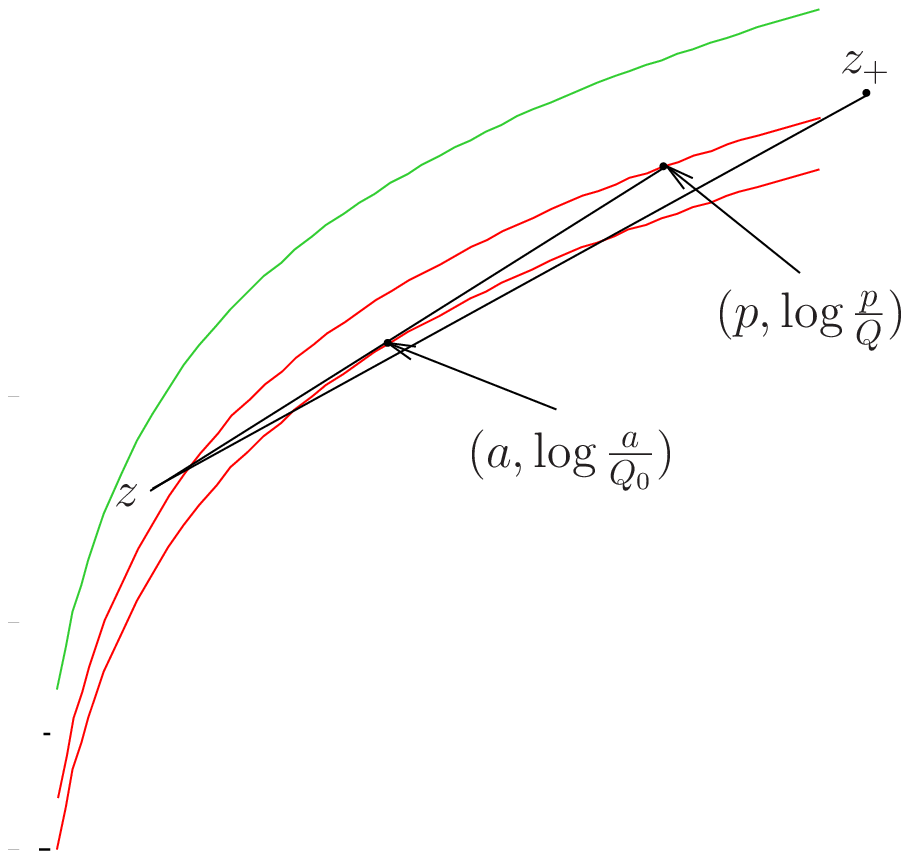}
\end{center}
We have
$$
\log\frac{p}{Q}-y = \frac{p}{a} - \frac{1}{a},
$$
and so, since $\alpha_+ = \log\frac{Q_0}{Q}$, we get
$$
\alpha_+ = \frac{p}{a} - \log\frac{p}{a} - 1.
$$
So both $a$ and $p$ are as in the statement, which finishes the proof.
\end{proof}
Now we prove the Lemma \ref{nevilezet}.
\begin{proof}
Suppose that $[z_-, z_+]$ does not lie in $\Omega_{Q_0}$. Then $x_+\geqslant p$, which implies $\frac{x_+}{a} \geqslant \frac{p}{a}>1$, so $f(\frac{x_+}{a})\geqslant \alpha_+$.
Next, we have
$$
2\alpha = \alpha_+ + \alpha_- +\log(x_+) + \log(2-x_+) \leqslant \frac{x_+}{a} - \log\frac{x_+}{a} - 1 + \alpha_- + \log(x_+) + \log(2-x_+).
$$
Recall that $\frac{1}{a}-\log\frac{1}{a}-1 = \alpha$, so
$$
2\alpha \leqslant \frac{x_+ - 1}{a} + \frac{1}{a} - \log\frac{1}{a} - 1 + \alpha_- + \log(2-x_+),
$$
thus
$$
\alpha \leqslant \frac{x_+ - 1}{a} + \alpha_- + \log(2-x_+).
$$
Using the equation for $a$ and $\alpha$ again, we get
$$
\frac{2-x_+}{a} - \log\frac{2-x_+}{a} - 1 \leqslant \alpha_-,
$$
so
$$
f(\frac{2-x_+}{a})\leqslant\alpha_-.
$$
We apply $g$ to both sides. We see that $2-x_+ = x_- \leqslant$, while $a>1$, so $g(f(\frac{2-x_+}{a}))=\frac{2-x_+}{a}$, therefore
$$
\frac{2-x_+}{a}\geqslant g(\alpha_-),
$$
and so
$$
\frac{2-x_+}{g(\alpha_-)} \geqslant a.
$$
But we know that $\alpha = f(\frac{1}{a})$ and $a>1$, so $g(\alpha)=\frac{1}{a}$, implies
$$
\frac{2-x_+}{g(\alpha_-)}\geqslant \frac{1}{g(\alpha)}.
$$
But this contradicts the assumption of our lemma.
\end{proof}

We now claim that the Theorem \ref{minimum} is proved. Indeed, the global minimum is either $0$, or attained on the boundary. On the boundary it is either again $0$, or attained on vertices. But vertices where $\alpha_+ + \alpha_- = 2\alpha$ give us non negative result, so the minimum may be attained only on vertices from the theorem.

\paragraph{Step 6: Vertex $\alpha_+ = \alpha_- = \log(Q_0)$}
In this case
$$
\alpha = \log(Q_0) + \frac{1}{2}\log(x_+ (2-x_+)).
$$
Let us get bounds for $x_+$. Clearly, $x_+\geqslant 1$, and this bound is accessible when $x_+=x_- = 1$. Since $\alpha \geqslant \log\frac{Q_0}{Q}$, we get $x_+(2-x_+)\geqslant \frac{1}{Q^2}$, which means that $x_+\leqslant 1+\sqrt{1-\frac{1}{Q^2}}$. As we know from the Section \ref{bound3}, this is also accessible when $x\in \Gamma_Q$. So, $x_+\in [1, 1+r]$, where $r=\sqrt{1-\frac{1}{Q^2}}$.

We now treat $\alpha_+$ as a function of $x_+$ and, therefore, our $\Delta$ becomes a function of $x_+$. We have
$$
\Delta(x_+)=2\log\frac{1}{g(\alpha)} -\frac{2}{g(\alpha)} - x_+\log(x_+) - (2-x_+)\log(2-x_+) -2\log\frac{1}{g(\alpha_+)} + \frac{2}{g(\alpha_+)}.
$$
From the same Section \ref{bound3} that $\Delta(1+r)\geqslant 0$. We intend to prove that $\Delta^{'}\leqslant 0$. Then we will be done with this case. We first notice that
$$
\frac{\partial \alpha}{\partial x_+} = \frac{1}{2}\left(\frac{1}{x_+}-\frac{1}{2-x_+}\right).
$$
Therefore,
\begin{multline*}
\Delta^{'}(x_+) = \frac{2}{g(\alpha)}\cdot \frac{1}{2} \left(\frac{1}{x_+} - \frac{1}{2-x_+}\right) - \log(x_+) + \log(2-x_+) = \frac{1}{g(\alpha)x_+} - \log(x_+) - \frac{1}{g(\alpha)(2-x_+)} + \log(2-x_+) = \\=\frac{1}{g(\alpha)x_+} + \log\frac{1}{g(\alpha)x_+} - \left(\frac{1}{g(\alpha)(2-x_+)} + \log\frac{1}{g(\alpha)(2-x_+)}\right).
\end{multline*}
The last equality is obtained by adding and subtracting $\log \frac{1}{g(\alpha)}$.
We notice that the function $s\mapsto \frac{1}{s}+\log\frac{1}{s}$ is decreasing, and $g(\alpha)x_+ \geqslant g(\alpha)(2-x_+)$. Therefore, $\Delta^{'}(x_+)\leqslant 0$, which finishes our proof in this case.
\paragraph{The vertex $\alpha_+ = \log\frac{Q_0}{Q}$, $\alpha_- = \log(Q_0)$.}
Now we set $\alpha_+ = \log\frac{Q_0}{Q}$ and $\alpha_- = \log(Q_0)$, so
$$
\alpha = \log(Q_0) - \frac{1}{2}\log(Q) + \frac{1}{2}\log(x_+(2-x_+)).
$$
Bounds for $x_+$ in this case are $1\leqslant x_+ \leqslant 1+r$, where $r=\sqrt{1-\frac{1}{Q}}$. We know from the Section \ref{bound2} that they are accessible, and that $\Delta(1+r)=0$ --- that is exactly our choice of $Q_0$, and this is the first and the only time when we use it. So again we would like to prove that $\Delta$ is decreasing. The difficulty is that now $\alpha_+\not= \alpha_-$, and so $\Delta$ does not have nice cancelations. We have
$$
\Delta(x_+) = 2\log\frac{1}{g(\alpha)} - \frac{2}{g(\alpha)} - x_+\log(x_+) - (2-x_+)\log(2-x_+)+x_+\log(g(\alpha_+)) + (2-x_+)\log(g(\alpha_-)) + \frac{x_+}{g(\alpha_+)} + \frac{2-x_+}{g(\alpha_-)},
$$
and so
$$
\Delta^{'}(x_+) = \frac{1}{g(\alpha)}\left(\frac{1}{x_+} - \frac{1}{2-x_+}\right) - \log(x_+)+\log(2-x_+) + \frac{1}{g(\alpha_+)} - \log\frac{1}{g(\alpha_+)} - \frac{1}{g(\alpha_-)} + \log\frac{1}{g(\alpha_-)}.
$$
From the investigation of previous vertex we know that
$$
\frac{1}{g(\alpha)}\left(\frac{1}{x_+} - \frac{1}{2-x_+}\right) - \log(x_+)+\log(2-x_+)\leqslant 0.
$$
This fact did not depend on the choice of $\alpha_\pm$. Finally, $\alpha_+ < \alpha_-$, so $g(\alpha_+)>g(\alpha_-)$, thus $\frac{1}{g(\alpha_-)} > \frac{1}{g(\alpha_+)}\geqslant 1$, and
$$
f(\frac{1}{g(\alpha_-)})>f(\frac{1}{g(\alpha_+)}).
$$
But this means exactly that
$$
\frac{1}{g(\alpha_+)} - \log\frac{1}{g(\alpha_+)} - \frac{1}{g(\alpha_-)} + \log\frac{1}{g(\alpha_-)}<0.
$$
Thus, our proof is finished.

\end{document}